\renewcommand{\phi}{\varphi}
\def\ch{\mathrm{ch}}
\def\cl{\mathrm{cl}}
\def\si{\sigma}
\def\Ga{\Gamma}
\def\la{\lambda}
\def\La{\Lambda}
\newcommand{\mc}[1]{\mathcal{#1}}
\newcommand{\mb}[1]{\mathbb{#1}}
\newcommand{\id}{\mathbbm{1}}
\DeclareMathOperator{\Hom}{Hom}
\DeclareMathOperator{\End}{End}
\DeclareMathOperator{\ad}{ad}
\DeclareMathOperator{\Der}{Der}
\DeclareMathOperator{\sign}{sign}
\DeclareMathOperator{\gr}{gr}
\DeclareMathOperator{\dr}{dr}
\newcommand{\PV}{\mathop{\rm PV }}
\newcommand{\Har}{\mathop{\rm Har }}
\def\smallunderbrace#1{\mathop{\vtop{\m@th\ialign{##\crcr
   $\hfil\displaystyle{#1}\hfil$\crcr
   \noalign{\kern3\p@\nointerlineskip}%
   \tiny\upbracefill\crcr\noalign{\kern3\p@}}}}\limits}
\theoremstyle{plain}
\newtheorem{theorem}{Theorem}[section]
\newtheorem{lemma}[theorem]{Lemma}
\newtheorem{proposition}[theorem]{Proposition}
\newtheorem{corollary}[theorem]{Corollary}
\theoremstyle{definition}
\newtheorem{definition}[theorem]{Definition}
\newtheorem{example}[theorem]{Example}
\theoremstyle{remark}
\newtheorem{remark}[theorem]{Remark}
\numberwithin{equation}{section}
\definecolor{light}{gray}{.9}
\begin{document}

\title{Poisson vertex algebra cohomology and differential Harrison cohomology}

\author{Bojko Bakalov}
\address{Department of Mathematics, North Carolina State University,
Raleigh, NC 27695, USA}
\email{bojko\_bakalov@ncsu.edu}

\author{Alberto De Sole}
\address{Dipartimento di Matematica, Sapienza Universit\`a di Roma,
P.le Aldo Moro 2, 00185 Rome, Italy}
\email{desole@mat.uniroma1.it}
\urladdr{www1.mat.uniroma1.it/$\sim$desole}

\author{Victor G. Kac}
\address{Department of Mathematics, MIT,
77 Massachusetts Ave., Cambridge, MA 02139, USA}
\email{kac@math.mit.edu}

\author{Veronica Vignoli}
\address{Dipartimento di Matematica, Sapienza Universit\`a di Roma,
P.le Aldo Moro 2, 00185 Rome, Italy}
\email{vignoli@mat.uniroma1.it}



\dedicatory{
To Nikolai Reshetikhin on his 60-th birthday.
}

\begin{abstract}
We construct a canonical map from the Poisson vertex algebra cohomology complex
to the differential Harrison cohomology complex,
which restricts to an isomorphism on the top degree.
This is an important step in the computation of Poisson vertex algebra and vertex algebra cohomologies.
\end{abstract}
\keywords{
Poisson vertex algebra,
Harrison cohomology,
chiral operad, classical operad,
Poisson vertex algebra cohomology,
variational Poisson cohomology.
}

\maketitle


\pagestyle{plain}

%

\medskip

\section{Introduction}\label{Intro}

The present paper is a next step in the development of the cohomology theory of vertex algebras
started in \cite{BDSHK18,BDSHK19}.
Recall (see e.g.\ \cite{BDSHK18}) that, to any linear symmetric (super)operad $\mc P$
over a field $\mb F$, one canonically associates a $\mb Z$-graded Lie superalgebra
\begin{equation}\label{eq:intro1}
W(\mc P)=\bigoplus_{k=-1}^\infty W^k(\mc P)
\,,\quad\text{ where }\quad
W^k(\mc P)=\mc P(k+1)^{S_{k+1}}
\,.
\end{equation}
An odd element $X\in W^1(\mc P)$,
satisfying $[X,X]=0$,
defines a cohomology complex $(W(\mc P),\ad X)$,
which is a differential graded Lie superalgebra.

A cohomology theory of vertex algebras is constructed by considering the operad
$\mc P_{\ch}(V)$, attached to a vector superspace $V$
with an even endomorphism $\partial$.
In order to describe this construction,
let, for $n\in\mb Z_{\geq0}$,
$$
V_n=V[\lambda_1,\dots,\lambda_n]\big/\langle\partial+\lambda_1+\dots+\lambda_n\rangle\,,
$$
where the indeterminates $\lambda_i$ have even parity and $\langle\Phi\rangle$
stands for the image of the endomorphism $\Phi$,
and let 
$$
\mc O^{\star,T}_n=\mb F[z_i-z_j,(z_i-z_j)^{-1}]_{1\leq i<j\leq n} \,.
$$
The superspace $\mc P_{\ch}(V)(n)$
is defined as the set of all linear maps
\begin{equation}\label{eq:1.13}
Y\colon V^{\otimes n}\otimes\mc O^{\star,T}_n\to V_n
\,\,,\qquad
v_1\otimes\dots\otimes v_n\otimes f\mapsto
Y_{\lambda_1,\dots,\lambda_n}(v_1\otimes\dots\otimes v_n\otimes f)
\,,
\end{equation}
satisfying the following two sesquilinearity properties ($1\leq i\leq n$):
\begin{equation}\label{eq:1.14}
Y_{\lambda_1,\dots,\lambda_n}(v_1\otimes\dots\otimes (\partial+\lambda_i)v_i \otimes\dots\otimes v_n\otimes f)
=
Y_{\lambda_1,\dots,\lambda_n}\Bigl(v_1\otimes\dots\otimes v_n\otimes \frac{\partial f}{\partial z_i}\Bigr)
\,,
\end{equation}
and
\begin{equation}\label{eq:1.15}
Y_{\lambda_1,\dots,\lambda_n}(v_1\otimes\dots\otimes v_n\otimes (z_i-z_j)f)
=
\Bigl(
\frac{\partial}{\partial\lambda_j}
-
\frac{\partial}{\partial\lambda_i}
\Bigr)
Y_{\lambda_1,\dots,\lambda_n}(v_1\otimes\dots\otimes v_n\otimes f)
\,.
\end{equation}
In \cite{BDSHK18} we also defined the action of $S_n$ on $\mc P_{\ch}(V)(n)$
and the $\circ_i$-products, making $\mc P_{\ch}(V)$ an operad.

As a result,
we obtain the Lie superalgebra 
$$
W_{\ch}(V)=W(\mc P_{\ch}(V))=\bigoplus_{k=-1}^\infty W_{\ch}^k(V)
\,,
$$
see \eqref{eq:intro1}.
We show in \cite{BDSHK18}
that odd elements $X\in W_{\ch}^1(\Pi V)$,
such that $[X,X]=0$,
correspond bijectively to vertex algebra structures on the $\mb F[\partial]$-module $V$,
such that $\partial$ is the translation operator (where $\Pi$ stands for the reversal of parity).
This leads to the vertex algebra cohomology complex $(W_{\ch}(\Pi V),\ad X)$,
with coefficients in the adjoint $V$-module.
The cohomology with coefficients in an arbitrary $V$-module $M$
in obtained by a simple reduction procedure.

Now, suppose that the $\mb F[\partial]$-module $V$
has an increasing $\mb Z_{\geq0}$-filtration by $\mb F[\partial]$-submodules.
Taking the increasing filtration of $\mc O^{\star,T}_n$ by the number of divisors,
we obtain an increasing filtration of $V^{\otimes n}\otimes\mc O^{\star,T}_n$.
This filtration induces a decreasing filtration of the superspace $\mc P_{\ch}(V)(n)$
(see \cite{BDSHK18}).
The associated graded spaces $\gr\mc P_{\ch}(V)(n)$
form a graded operad.

On the other hand, in \cite{BDSHK18}
we introduced the closely related operad $\mc P_{\cl}(V)$,
which ``governs'' the Poisson vertex algebra (PVA) structures on the $\mb F[\partial]$-module $V$.
Let $\mb F \mc G(n)$ be the vector space
(with even parity)  spanned by the set $\mc G(n)$ of labeled oriented graphs with $n$ vertices.
The vector superspace $\mc P_{\cl}(V)(n)$
is the space of linear maps (cf.\ \eqref{eq:1.13})
\begin{equation}\label{eq:1.16}
Y\colon \mb F \mc G(n) \otimes V^{\otimes n}
\to V_n \,,
\qquad \Gamma\otimes v \mapsto Y^\Gamma(v)
\,,
\end{equation}
satisfying the sesquilinearity conditions \eqref{eq:sesq1} and \eqref{eq:sesq2} in Section \ref{ssec:cloperad}, 
which are the ``classical'' analogs of \eqref{eq:1.14} and \eqref{eq:1.15}.
The corresponding $\mb Z$-graded Lie superalgebra 
$$
W_{\cl}(\Pi V)=\bigoplus_{k=-1}^\infty W_{\cl}^k(\Pi V)
\,,
$$
is such that odd elements $X\in W_{\cl}^1(\Pi V)$ with $[X,X]=0$
parametrize the PVA structures on the $\mb F[\partial]$-module $V$ by 
\begin{equation}\label{eq:1.17}
ab=(-1)^{p(a)}X^{\bullet\to\bullet}(a\otimes b) 
\,,\qquad
[a_\lambda b]=
(-1)^{p(a)}X^{\bullet\,\,\,\bullet}_{\lambda,-\lambda-\partial}(a\otimes b)
\,.
\end{equation}

When $V$ is endowed with an increasing $\mb Z_{\geq0}$-filtration by $\mb F[\partial]$-submodules,
we have a canonical linear map of graded operads
\begin{equation}\label{eq:1.18}
\gr\mc P_{\ch}(V)
\to
\mc P_{\cl}(\gr V)
\,.
\end{equation}
It is proved in \cite{BDSHK18}
that the map \eqref{eq:1.18} is injective.
The main result of \cite{BDSHK19} is that this map is an isomorphism, 
provided that the filtration of $V$
is induced by a grading by $\mb F[\partial]$-modules.
If, in addition, this filtration of $V$ is such that $\gr V$ inherits from the vertex algebra structure of $V$ a PVA structure, 
then, as a result, the vertex algebra cohomology is majorized by the classical PVA cohomology:
\begin{equation}\label{eq:1.19}
\dim H_{\ch}^n(V)
\leq
\dim H_{\cl}^n(V)
\,.
\end{equation}

Unfortunately, \eqref{eq:1.19} is not a ``practical'' inequality,
since the direct computation of $H_{\cl}(V,M)$
may be very hard.
However, 
if the superspace $V$ 
is endowed with the structure of a commutative associative superalgebra
with an even derivation $\partial$,
there exists a much smaller $\mb Z$-graded Lie superalgebra,
constructed in \cite{DSK13}:
$$
W_{\PV}(\Pi V)=\bigoplus_{k=-1}^\infty W_{\PV}^k(\Pi V)
\,.
$$
It has the property that PVA structures on the differential algebra $V$
correspond bijectively to odd elements $X\in W^1_{\PV}(\Pi V)$
such that $[X,X]=0$, cf.\  \eqref{eq:1.17}.
This produces the variational Poisson cohomology complex
$(W_{\PV}(\Pi V),\ad X)$.
It is easy to see that this complex is a subcomplex of the complex
$(W_{\cl}(\Pi V),\ad X)$,
corresponding to the graphs without edges.

The present paper is the first paper towards proving that the inclusion of complexes
\begin{equation}\label{eq:introa}
(W_{\PV}(\Pi V),\ad X)\hookrightarrow (W_{\cl}(\Pi V),\ad X)
\,,
\end{equation}
induces an isomorphism in cohomology, under some assumptions on the differential algebra $V$.
In this case, we may replace $H_{\cl}$ by $H_{\PV}$ in \eqref{eq:1.19}.
Since there are by now well developed tools for computing variational Poisson cohomology,
see \cite{DSK13} and \cite{BDSK19},
this allows one to get a hold of the vertex algebra cohomology.

In the present paper, given a vector superspace $V$ with an even endomorphism $\partial$,
we construct a canonical map of complexes
\begin{equation}\label{eq:intro9}
(W_{\cl}(\Pi V),\ad X)
\,\stackrel{\varphi}{\longrightarrow}\,
(C_{\partial,\Har}(V),d)
\,,
\end{equation}
which restricts to a bijective linear map on the top degree:
$$
\gr^{n-1}W^{n-1}_{\cl}(\Pi V)
\,\stackrel{\sim}{\longrightarrow}\,
C^n_{\partial,\Har}(V)
\,,
$$
see Theorem \ref{thm:main} in Section \ref{sec:main}.
Here $X\in W^1_{\cl}(\Pi V)$ corresponds to the PVA structure on $V$,
given by \eqref{eq:1.17}.
In particular, $V$ is endowed with a structure of a commutative associative superalgebra 
with an even derivation $\partial$,
hence we may consider the differential Harrison complex 
$$
\Big(
C_{\partial,\Har}(V)=\bigoplus_{n\in\mb Z_{\geq0}}C^n_{\partial,\Har}(V)
\,,\,
d
\Big)
\,.
$$
Here $C^n_{\partial,\Har}(V)$ is the subspace of $\Hom_{\mb F[\partial]}(V^{\otimes n},V)$
satisfying Harrison's conditions \eqref{eq:harrisoncond} in Section \ref{ssec:Harrison}.
It was shown in \cite{Har62,GS87} 
that the subspace $C_{\partial,\Har}(V)$
of $\bigoplus_{n\in\mb Z_{\geq0}}\Hom_{\mb F[\partial]}(V^{\otimes n},V)$
is invariant with respect to the Hochschild differential $d$
(which commutes with $\partial$).
Finally, the map $\varphi$ in \eqref{eq:intro9}
maps $Y\in W^{n-1}_{\cl}(\Pi V)$ to $Y^{\Lambda_n}$,
where $\Lambda_n$ is the labeled oriented graph 
$$
\begin{tikzpicture}[scale=1.5]
\node at (-0.5,0){$\La_n=$};
\draw[fill] (0,0) circle [radius=0.02];
\node at (0,-0.2){\tiny{$1$}};
\draw[->] (0,0) to (0.3,0);
\draw[fill] (0.3,0) circle [radius=0.02];
\node at (0.3,-0.2){\tiny{$2$}};
\draw[->] (0.3,0) to (0.6,0);
\node at (0.7,0) {.};
\node at (0.75,0) {.};
\node at (0.8,0) {.};
\draw[->] (0.9,0) to (1.2,0);
\draw[fill] (1.2,0) circle [radius=0.02];
\node at (1.2,-0.2){\tiny{$n$}};
\end{tikzpicture}
$$

In our next paper \cite{BDSHKV19}
we will show that if the differential superalgebra $V$
is such that $H_{\partial,\Har}^n(V,d)=0$ for $n>1$,
then the inclusion \eqref{eq:intro9}
is a quasi-isomorphism,
hence in this case the inequality \eqref{eq:1.19} turns into the inequality
\begin{equation}\label{eq:1.19a}
\dim H_{\ch}^n(V)
\leq
\dim H_{\PV}^n(V)
\,.
\end{equation}
We refer to the Ph.D.\ thesis \cite{Vig19} for examples and more details.

Throughout the paper, the base field $\mb{F}$ has characteristic 0, and, unless otherwise specified, all vector spaces, their tensor products and $\Hom$'s are over $\mb{F}$.

\subsubsection*{Acknowledgments} 

This research was partially conducted during the authors' visits
to the University of Rome La Sapienza, to MIT, and to IHES.
The first author was supported in part by a Simons Foundation grant 584741.
The second author was partially supported by the national PRIN fund n. 2015ZWST2C$\_$001
and the University funds n. RM116154CB35DFD3 and RM11715C7FB74D63.
The third author was partially supported by 
the Bert and Ann Kostant fund and by a Simons Fellowship.

\section{Differential Harrison cohomology complex} \label{sec:Harrison}

In this section, we recall the definition of the Harrison cohomology complex
and we introduce the differential Harrison cohomology complex.

\subsection{Hochschild cohomology complex} \label{ssec:Hoch}

First, we review the Hochschild cohomology complex, 
of which Harrison's is a subcomplex, see \cite{Hoc45} and \cite{Har62}.
We use the original Harrison's definition. For other definitions see \cite{GS87,L13}. 

Let $A$ be an associative algebra over the base field $\mb F$,
and $M$ be an $A$-bimodule. We will write $A^{\otimes n}$ 
for the $n$-fold tensor product $A\otimes \dots \otimes A$.
The Hochschild cohomology complex is defined as follows.
The space of $n$-cochains is
\begin{equation} \label{eq:hochobject}
\Hom(A^{\otimes n},M)\;,
\end{equation}
and the differential $d\colon\Hom(A^{\otimes n},M) \rightarrow \Hom(A^{\otimes n+1},M)$ is defined by
\begin{align} \label{eq:hochdifferential}
(df)(a_1\otimes &\dots\otimes a_{n+1}) = a_1f(a_2\otimes\dots\otimes a_{n+1}) \notag \\ &+\sum_{i=1}^{n} (-1)^i f(a_1\otimes \dots\otimes a_{i-1}\otimes a_ia_{i+1}\otimes a_{i+2}\otimes \dots\otimes a_{n+1}) \notag \\
&+ (-1)^{n+1} f(a_1\otimes\dots\otimes a_n)a_{n+1}\,.
\end{align}
Then $d^2=0$, and we get the Hochschild cohomology complex
\begin{equation}\label{eq:hochcomplex}
0 \longrightarrow M \stackrel{d}{\longrightarrow} \Hom(A,M)\stackrel{d}{\longrightarrow} \Hom(A^{\otimes2},M) \stackrel{d}{\longrightarrow}\cdots \,.
\end{equation}

If $A$ is an associative algebra with a derivation $\partial\colon A\rightarrow A$, 
and $M$ is a differential bimodule over $A$
(i.e., the action of $\partial$ is compatible with the bimodule structure), 
we may consider the \emph{differential Hochschild cohomology complex} 
by taking the subspace of $n$-cochains
\begin{equation}\label{eq:cochaindiff}
	\Hom_{\mb F[\partial]}(A^{\otimes n}, M)\,.
\end{equation}
It is clear by the definition \eqref{eq:hochdifferential} that the differential $d$ 
maps $\Hom_{\mb F[\partial]}(A^{\otimes n}, M)$ to $\Hom_{\mb F[\partial]}(A^{\otimes n+1}, M)$. 
Hence, we have a cohomology subcomplex.

\begin{remark}\label{rem:super}
It is straightforward, using the Koszul--Quillen rule,
to extend the definition of the Hochschild complex
to the case when $A$ is an associative superalgebra,
as well as all other definitions and results of the paper.
We restricted here to the purely even case for the simplicity of the exposition.
\end{remark}

\subsection{Monotone permutations} \label{ssec:monotone}

Consider the symmetric group $S_n$. Using Harrison's notation in \cite{Har62} (see also \cite{GS87}), 
we have the following definition:
\begin{definition}
A permutation $\pi \in S_n$ is called \emph{monotone} if, for each $i=1,\dots , n$, 
one of the following two conditions holds:
\begin{enumerate}[(a)]
\item $\pi(j)<\pi(i)$ for all $j<i$;
\item$\pi(j)>\pi(i)$ for all $j<i$.
\end{enumerate}
(Not necessarily the same condition (a) or (b) holds for every $i$.) When (b) holds, 
we call $i$ a \emph{drop} of $\pi$. Also, $\pi(1)=k$ is called the \emph{start} of $\pi$
(and we say that $\pi$ \emph{starts} at $k$).
\end{definition}

We denote by $\mc M_n\subset S_n$ the set of monotone permutations, and by $\mc M_n^k\subset \mc M_n$ 
the set of monotone permutations starting at $k$.

Here is a simple description of all monotone permutations starting at $k$. 
Let us identify the permutation $\pi\in S_n$ with the $n$-tuple $[\pi(1),\dots,\pi(n)]$. 
To construct all $\pi\in\mc M_n^k$, we let $\pi(1)=k$.
Then, for every choice of $k-1$ positions in $\{2,\dots,n\}$ we get a monotone permutation $\pi$ as follows. 
In the selected positions we put the numbers $1$ to $k-1$ 
in decreasing order from left to right; in the remaining positions we write the numbers $k+1$ to $n$ 
in increasing order from left to right.
(The selected positions are the drops of $\pi$.)

According to the above description, we have a bijective correspondence
\begin{equation}\label{eq:correspondence}
\mc M_n^k \xrightarrow[]{\sim} 
\big\{
D\subset \{2,\dots,n\} 
\,\big|\;
|D|=k-1 
\big\}\,,
\end{equation}
associating the monotone permutation $\pi\in \mc M_n^{k}$ 
to the set $D(\pi)$ of drops of $\pi$, which are 
$$
\pi^{-1}(k-1)<\pi^{-1}(k-2)<\dots <\pi^{-1}(1)\in \{2,\dots,n\}
\,.
$$

\begin{example}
The only monotone permutation starting at $1$ is the identity, while the only monotone permutation starting at $n$ is 
\begin{equation}\label{eq:sigman}
\sigma_n=[n\;\;n-1\;\cdots\;2\;\;1]\,.
\end{equation}
\end{example}
\begin{example}
Let $n=5$ and $k=3$. The monotone permutations starting at $3$ are 
\begin{align*}
& [3\;\;\underline{2}\;\;\underline{1}\;\;4\;\;5]\,,\quad
[3\;\;\underline{2}\;\;4\;\;\underline{1}\;\;5]\,,\quad
[3\;\;\underline{2}\;\;4\;\;5\;\;\underline{1}] \,,\\
& [3\;\;4\;\;\underline{2}\;\;\underline{1}\;\;5] \,,\quad
[3\;\;4\;\;\underline{2}\;\;5\;\;\underline{1}] \,,\quad
[3\;\;4\;\;5\;\;\underline{2}\;\;\underline{1}] \,,
\end{align*}
where we underlined the positions of the drops.
\end{example}
Given a monotone permutation $\pi$, we denote by $\dr(\pi)$ the sum of all the drops with respect to $\pi$. 
According to the previous description, we can easily see that 
\begin{equation} \label{eq:drop}
(-1)^{\dr(\pi)}=(-1)^{k-1} \sign (\pi)\,,
\end{equation} 
if $k$ is the start of $\pi$.

Note that the description \eqref{eq:correspondence} of $\mc M_n^{k}$ in terms of positions of drops allows us to count the number of elements in $\mc M_n^k$, for fixed $n$ and $k$. We have: 
\begin{equation}\label{eq:A}
|\mc M_n^k| = \binom{n-1}{k-1} \,.
\end{equation}
\begin{remark}\label{rem:monotone}
Let us denote by $\mc M_n^{k,k-1}\subset \mc M_n^k$ the subset of all monotone permutations $\pi$ 
starting at $k$ with $\pi(2)=k-1$, and by $\mc M_n^{k,k+1}\subset \mc M_n^k$ the subset 
of all monotone permutations $\pi$ starting at $k$ with $\pi(2)=k+1$. Then we have
\begin{equation*}
\mc M_n^k =	\mc M_n^{k,k-1} \sqcup \mc M_n^{k,k+1}\,.
\end{equation*}
\end{remark}
\begin{lemma} \label{lem:1}
There are natural identifications 
$$
\mc M_n^{k,k-1} \simeq \mc M_{n-1}^{k-1}
\quad
(\text{resp. }
\mc M_n^{k,k+1} \simeq \mc M_{n-1}^{k})
\,,
$$
mapping $\pi\in \mc M_n^{k,k-1}$ 
to $\bar{\pi}\in\mc M_{n-1}^{k-1}$ $($resp. $\pi\in \mc M_n^{k,k+1}$ to $\bar{\pi}\in\mc M_{n-1}^{k})$, 
given by $\bar{\pi}(1):= k-1$ $($resp. $k)$, and, for $i=2,\dots, n-1$,
\begin{equation}
\bar{\pi}(i):= 
\begin{cases}
\pi(i+1)\,, & \;\text{if \; $\pi(i+1)<k$} \\
\pi(i+1)-1\,,  & \;\text{if \; $\pi(i+1)>k$}
\end{cases}\;.
\end{equation}
Moreover, $$(-1)^{\dr(\bar{\pi})}=(-1)^{\dr(\pi)+k}\, \quad (\text{resp. } (-1)^{\dr(\pi)+ k-1}) \,.$$ 
\end{lemma}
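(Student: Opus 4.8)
The plan is to reduce the whole statement to the bijective description \eqref{eq:correspondence} of monotone permutations by their drop sets, so that everything becomes elementary bookkeeping with subsets of $\{2,\dots,n\}$. First I would translate Remark \ref{rem:monotone} into this language: by the construction preceding \eqref{eq:correspondence} --- the values $1,\dots,k-1$ fill the drop positions in decreasing order and $k+1,\dots,n$ fill the remaining positions of $\{2,\dots,n\}$ in increasing order --- a permutation $\pi\in\mc M_n^k$ has $\pi(2)=k-1$ exactly when $2\in D(\pi)$, and $\pi(2)=k+1$ exactly when $2\notin D(\pi)$. Hence, under \eqref{eq:correspondence}, the subsets $\mc M_n^{k,k-1}$ and $\mc M_n^{k,k+1}$ correspond, respectively, to the families of drop sets $D$ of size $k-1$ with $2\in D$, resp.\ with $2\notin D$.

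Next I would rewrite the formula for $\bar\pi$ and check that it is well defined with the claimed target. Let $r_k\colon\{1,\dots,n\}\setminus\{k\}\to\{1,\dots,n-1\}$ be the order-preserving bijection $j\mapsto j$ for $j<k$ and $j\mapsto j-1$ for $j>k$. Since $\pi(1)=k$, the values $\pi(2),\dots,\pi(n)$ exhaust $\{1,\dots,n\}\setminus\{k\}$, and the formula in the Lemma is precisely $\bar\pi(i)=r_k(\pi(i+1))$ for $i\ge2$ together with $\bar\pi(1)=r_k(\pi(2))$, which equals $k-1$ (resp.\ $k$); in particular $\bar\pi\in S_{n-1}$. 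Monotonicity of $\bar\pi$ is then inherited from that of $\pi$: for $i\ge2$ the set $\{\bar\pi(1),\dots,\bar\pi(i-1)\}$ is $r_k$ applied to $\{\pi(2),\dots,\pi(i)\}$ while $\bar\pi(i)=r_k(\pi(i+1))$, so since $r_k$ preserves order and $\pi$ is monotone at position $i+1$ --- i.e.\ $\pi(i+1)$ is either greater than every $\pi(j)$, $j\le i$, or smaller than every such $\pi(j)$, in particular for $j\in\{2,\dots,i\}$ --- the corresponding alternative (a) or (b) holds for $\bar\pi$ at $i$. This also pins down the start of $\bar\pi$ as $k-1$, resp.\ $k$.

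Finally I would compute $D(\bar\pi)$ and read off both conclusions. By the same order-preserving argument, for $i\ge2$ position $i+1$ is a drop of $\pi$ iff position $i$ is a drop of $\bar\pi$, and position $1$ of $\bar\pi$ is its start, so $D(\bar\pi)=\{d-1 : d\in D(\pi),\ d\ge3\}$. For $\pi\in\mc M_n^{k,k-1}$ one has $2\in D(\pi)$, whence $D(\bar\pi)=\{d-1 : d\in D(\pi)\setminus\{2\}\}$, of size $k-2$; comparing with \eqref{eq:correspondence} for $\mc M_{n-1}^{k-1}$, the assignment $\pi\mapsto\bar\pi$ is the composite of \eqref{eq:correspondence}, the evident bijection $D\mapsto\{d-1 : d\in D\setminus\{2\}\}$, and the inverse of \eqref{eq:correspondence}, hence a bijection $\mc M_n^{k,k-1}\xrightarrow{\sim}\mc M_{n-1}^{k-1}$. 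For $\pi\in\mc M_n^{k,k+1}$ one has $2\notin D(\pi)$, so $D(\bar\pi)=\{d-1 : d\in D(\pi)\}$, of size $k-1$, and the same reasoning gives a bijection onto $\mc M_{n-1}^k$. Summing over $D(\bar\pi)$ then yields $\dr(\bar\pi)=\dr(\pi)-2-(|D(\pi)|-1)=\dr(\pi)-k$ in the first case and $\dr(\bar\pi)=\dr(\pi)-|D(\pi)|=\dr(\pi)-(k-1)$ in the second, which are the asserted sign identities. (Alternatively, these follow from \eqref{eq:drop} together with $\sign(\pi)=(-1)^{k-1}\sign(\bar\pi)$, obtained by counting the $k-1$ inversions created when the value $k$ is prepended to $r_k^{-1}(\bar\pi)$.)

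The argument is entirely elementary; the one place that requires care is this second step --- keeping the index shift $i\leftrightarrow i+1$ straight while correctly transporting the two alternatives of the definition of a monotone permutation --- together with the degenerate endpoints $k=1$, where $\mc M_n^{k,k-1}=\emptyset$, and $k=n$.
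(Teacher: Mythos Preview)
Your proof is correct. The paper itself offers no argument beyond ``Straightforward; see \cite{Vig19} for details,'' and your approach via the drop-set bijection \eqref{eq:correspondence} is exactly the natural way to make that word precise: once one observes that $\pi(2)=k-1$ iff $2\in D(\pi)$, the map $\pi\mapsto\bar\pi$ becomes the shift $D\mapsto\{d-1:d\in D,\,d\ge3\}$ on drop sets, from which both the bijectivity and the drop-sum identities follow by direct counting.
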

\begin{proof}
Straightforward; see \cite{Vig19} for details.
\end{proof}
\begin{remark} \label{rem:monotone2}
Observe that, given a monotone permutation $\pi$, either $\pi(n)=1$ or $\pi(n)=n$. 
Denote by ${}^{1}{\mc{M}}_n^k\subset \mc{M}_n^k$ the set of all the monotone permutations $\pi$
starting at $k$ with $\pi(n)=1$, and by ${}^{n}{\mc{M}}_n^k\subset \mc{M}_n^k$ the set of 
all the monotone permutations $\pi$ starting at $k$ with $\pi(n)=n$. 
As in Remark \ref{rem:monotone}, we have
\begin{equation*}
\mc M_n^k =	{}^{1}{\mc M}_n^{k} \sqcup {}^{n}{\mc M}_n^{k}\,.
\end{equation*}
\end{remark}
\begin{lemma} \label{lem:2}
There are natural identifications 
$$
{}^{1}{\mc M}_n^{k}\simeq \mc{M}_{n-1}^{k-1}
\quad
(\text{resp. }
{}^{n}{\mc M}_n^{k}\simeq \mc{M}_{n-1}^{k})
\,,
$$
mapping $\pi\in {}^{1}{\mc{M}}_n^k$ to $\tilde{\pi}\in \mc M_{n-1}^{k-1}$  
$($resp. $\pi\in {}^{n}{\mc{M}}_n^k$ to $\tilde{\pi}\in \mc M_{n-1}^{k})$, 
given by $\tilde{\pi}(i):= \pi(i)-1$ $($resp. $\tilde{\pi}(i):= \pi(i))$, for $i=1,\dots,n-1$. 
Moreover, 
$$
(-1)^{\dr(\tilde{\pi})}=(-1)^{\dr(\pi)+n}\, \quad (\text{resp. } (-1)^{\dr(\pi)})
\,.
$$ 
\end{lemma}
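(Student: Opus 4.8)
The plan is to imitate the proof of Lemma~\ref{lem:1}, working from the other end of the tuple. Recall that a monotone permutation $\pi\in\mc M_n^k$ satisfies, at position $i=n$, one of conditions (a) or (b); since $\pi(n)$ is compared with \emph{all} earlier values, (a) forces $\pi(n)=n$ and (b) forces $\pi(n)=1$. This gives the disjoint decomposition of Remark~\ref{rem:monotone2}, so it suffices to treat the two cases separately and produce the claimed bijections together with the sign identities.

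First I would treat the case $\pi\in{}^{n}{\mc M}_n^k$, i.e.\ $\pi(n)=n$. Deleting the last entry leaves the tuple $[\pi(1),\dots,\pi(n-1)]$, which is a permutation of $\{1,\dots,n-1\}$; call it $\tilde\pi$, so $\tilde\pi(i)=\pi(i)$ for $i=1,\dots,n-1$. I would check that $\tilde\pi$ is again monotone: conditions (a)/(b) for $\tilde\pi$ at each $i\le n-1$ are literally the same comparisons as for $\pi$ at that $i$, so monotonicity is inherited. Clearly $\tilde\pi$ starts at $k$, so $\tilde\pi\in\mc M_{n-1}^k$, and the map is a bijection because its inverse simply appends $n$ in position $n$ (which always keeps monotonicity, as $n$ is larger than everything before it, so position $n$ satisfies (a)). Since position $n$ is not a drop of $\pi$ (it satisfies (a)) and the drops in positions $2,\dots,n-1$ are identical for $\pi$ and $\tilde\pi$, we get $\dr(\tilde\pi)=\dr(\pi)$, hence $(-1)^{\dr(\tilde\pi)}=(-1)^{\dr(\pi)}$.

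Next I would treat $\pi\in{}^{1}{\mc M}_n^k$, i.e.\ $\pi(n)=1$. Here position $n$ is a drop, and $1$ occupies it. Deleting the last entry and decreasing all remaining values by $1$ gives $\tilde\pi(i):=\pi(i)-1$ for $i=1,\dots,n-1$, a permutation of $\{1,\dots,n-1\}$ starting at $k-1$. Monotonicity is preserved because subtracting the constant $1$ from all values does not change any of the order comparisons in (a)/(b). The inverse prepends nothing but rather increases all values by $1$ and appends $1$ in the last position; since $1$ is smaller than all earlier entries, position $n$ then satisfies (b), so the result is monotone. This is a bijection onto $\mc M_{n-1}^{k-1}$. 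For the sign: the drops of $\pi$ are those in positions $2,\dots,n-1$ together with the drop at position $n$; under the identification the former become exactly the drops of $\tilde\pi$ (the order comparisons, hence the set of drop-positions among $2,\dots,n-1$, are unchanged), and the drop at position $n$ is removed. Therefore $\dr(\pi)=\dr(\tilde\pi)+n$, which is the stated $(-1)^{\dr(\tilde\pi)}=(-1)^{\dr(\pi)+n}$.

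The only mildly delicate point—the one I would be most careful about—is verifying that monotonicity is genuinely preserved in both directions, in particular that \emph{every} position $i\le n-1$ has the same status (drop or not) before and after the operation, and that the newly created last position has the correct status; but this is immediate since both operations (restriction, and restriction composed with the global shift by $1$) are order-preserving on the relevant sets of values. Everything else is bookkeeping. I would write this up briefly, or, as with Lemma~\ref{lem:1}, refer to \cite{Vig19} for the routine details.
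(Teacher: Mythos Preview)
Your proposal is correct and is precisely the straightforward verification the paper alludes to; the paper itself gives no argument beyond ``Straightforward; see \cite{Vig19} for details,'' so you have simply written out that routine check.
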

\begin{proof}
Straightforward; see \cite{Vig19} for details.
\end{proof}

\subsection{Differential Harrison cohomology complex} \label{ssec:Harrison}

Let us now recall Harrison's original definition of his cohomology complex \cite{Har62}. 
Let $A$ be a commutative associative algebra, and $M$ be a symmetric $A$-bimodule, 
i.e., such that $am=ma$, for all $a\in A$ and $m\in M$. 
For every $1<k\leq n$
define the following endomorphism on the space $\Hom(A^{\otimes n},M)$:
\begin{equation} \label{eq:lkf}
(L_kF)(a_1\otimes\dots\otimes a_n):= \sum_{\pi \in \mc M_n^k} (-1)^{\dr(\pi)} F(a_{\pi(1)}\otimes\dots\otimes a_{\pi(n)})\,.
\end{equation}
A \emph{Harrison $n$-cochain} is defined as a Hochschild $n$-cochain
$F\in\Hom(A^{\otimes n},M)$ fixed by all operators $L_k$:
\begin{equation} \label{eq:harrisoncond}
L_kF=F\,, \;\text{ for every } \; 2\leq k\leq n\,.
\end{equation}
We will denote by 
\begin{equation} \label{eq:harcochain}
C^n_{\Har}(A,M)\subset \Hom(A^{\otimes n},M)
\end{equation}
the space of Harrison $n$-cochains.

Furthermore, if $A$ is a differential algebra with a derivation $\partial:A\rightarrow A$, 
and $M$ is a symmetric differential bimodule, we may consider the space of 
\emph{differential Harrison} $n$-cochains
\begin{equation}\label{eq:harrisondiff}
C^{n}_{\partial,\Har}(A,M)\subset \Hom_{\mb{F}[\partial]}(A^{\otimes n},M)\,,
\end{equation}
again defined by Harrison's conditions \eqref{eq:harrisoncond}.
\begin{proposition}
\begin{enumerate}[(a)]
\item
The Harrison complex $(C_{\Har}(A,M),d)$ is a subcomplex of the Hochschild complex.
\item
If $A$ is a differential algebra, with a derivation $\partial:A\rightarrow A$, 
the differential Harrison complex $(C_{\partial,\Har}(A,M),d)$ is a subcomplex 
of the differential Hochschild complex.
\end{enumerate}
\end{proposition}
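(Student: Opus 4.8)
The plan is to obtain part (b) for free from part (a), and to prove (a) by a direct computation built on the combinatorics of monotone permutations set up above. For (b): by definition $C^n_{\partial,\Har}(A,M)=C^n_{\Har}(A,M)\cap\Hom_{\mb F[\partial]}(A^{\otimes n},M)$, since differential Harrison cochains are precisely the $\partial$-equivariant Hochschild cochains obeying \eqref{eq:harrisoncond}. It was already observed in Section~\ref{ssec:Hoch} that $d$ maps $\Hom_{\mb F[\partial]}(A^{\otimes n},M)$ into $\Hom_{\mb F[\partial]}(A^{\otimes n+1},M)$, because the formula \eqref{eq:hochdifferential} is $\partial$-equivariant when $\partial$ is a derivation of $A$ and $M$ is a differential bimodule. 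Granting that $d$ also preserves $C_{\Har}(A,M)$, it preserves the intersection; since $d^2=0$ is inherited from the Hochschild complex, this gives (b).

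For (a), the reduction is to the statement: if $F\in\Hom(A^{\otimes n},M)$ satisfies $L_jF=F$ for all $2\le j\le n$, then $L_k(dF)=dF$ for all $2\le k\le n+1$ (again, $d^2=0$ being already known). I would write $dF=d_0F+\sum_{j=1}^{n}(-1)^j m_jF+(-1)^{n+1}d_{n+1}F$, where $d_0$ is the left-module term $a_1\mapsto a_1F(a_2\otimes\cdots)$, $d_{n+1}$ is the right-module term $F(\cdots\otimes a_n)a_{n+1}$, and the $m_j$ are the internal-multiplication terms of \eqref{eq:hochdifferential}, and then apply $L_k$ to each piece. Since every $\pi\in\mc M_{n+1}^k$ has $\pi(1)=k$, the outer factor $a_k$ pulls out of $L_k(d_0F)$; since every such $\pi$ also has $\pi(n+1)\in\{1,n+1\}$ and $M$ is symmetric, the outer factor $a_{\pi(n+1)}$ pulls out of $L_k(d_{n+1}F)$.

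The heart of the argument is to reorganize the remaining signed sums over $\mc M_{n+1}^k$. For the $d_0$ term I would use the splitting $\mc M_{n+1}^k=\mc M_{n+1}^{k,k-1}\sqcup\mc M_{n+1}^{k,k+1}$ of Remark~\ref{rem:monotone} together with the identifications $\mc M_{n+1}^{k,k-1}\simeq\mc M_n^{k-1}$ and $\mc M_{n+1}^{k,k+1}\simeq\mc M_n^{k}$ of Lemma~\ref{lem:1}; for the $d_{n+1}$ term, the splitting $\mc M_{n+1}^k={}^1\mc M_{n+1}^k\sqcup{}^{n+1}\mc M_{n+1}^k$ of Remark~\ref{rem:monotone2} with the identifications of Lemma~\ref{lem:2}; the internal terms $m_jF$ are handled in the same style, splitting $\mc M_{n+1}^k$ according to the side on which the merged positions $\pi(j),\pi(j+1)$ fall. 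In each case, after relabeling the $n$ surviving arguments and converting $(-1)^{\dr\pi}$ into $\pm(-1)^{\dr\bar\pi}$ via the drop-sign relations of Lemmas~\ref{lem:1} and~\ref{lem:2} and \eqref{eq:drop}, each contribution becomes $\pm(L_jF)$ of the relabeled arguments for a suitable $j$ (with the conventions $L_1=\mathrm{id}$ and ``$L_jF=0$'' when $\mc M_n^j=\emptyset$). Invoking $L_jF=F$ for $2\le j\le n$ collapses the sum, and a final use of commutativity of $A$ and symmetry of $M$ reassembles the survivors into $dF(a_1\otimes\cdots\otimes a_{n+1})$. I would warm up on the degenerate case $k=n+1$, where $\mc M_{n+1}^{n+1}=\{\sigma_{n+1}\}$ is a single permutation and the claim comes down to $L_nF=F$ plus commutativity and symmetry.

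I expect the sign bookkeeping — tracking $(-1)^{\dr(\cdot)}$ through the matching of the monotone permutations of $S_{n+1}$ against those of $S_n$ — to be the only real obstacle, and Lemmas~\ref{lem:1} and~\ref{lem:2} are exactly the input that makes it manageable; once those are in hand the verification is lengthy but routine. Conceptually, the collapse reflects the fact that \eqref{eq:harrisoncond} is equivalent to $F$ annihilating all nontrivial shuffles, and shuffles are compatible with the simplicial faces in the bar complex of a commutative algebra, so the proof could alternatively be organized in that language. Full details are in \cite{Vig19}.
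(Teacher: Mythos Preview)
Your proposal is correct, but it does substantially more than the paper's own proof, which for (a) simply cites \cite{Har62,GS87} and for (b) says ``straightforward.'' Your argument for (b) is exactly what that word encodes: $C^n_{\partial,\Har}(A,M)$ is the intersection of $C^n_{\Har}(A,M)$ with $\Hom_{\mb F[\partial]}(A^{\otimes n},M)$, and $d$ preserves each piece separately.

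For (a) you are sketching Harrison's original computation from \cite{Har62}, and the outline is right. The splittings $\mc M_{n+1}^k=\mc M_{n+1}^{k,k-1}\sqcup\mc M_{n+1}^{k,k+1}$ and $\mc M_{n+1}^k={}^1\mc M_{n+1}^k\sqcup{}^{n+1}\mc M_{n+1}^k$, together with the bijections and drop-sign identities of Lemmas~\ref{lem:1} and~\ref{lem:2}, are precisely the combinatorial input that makes the term-by-term matching go through; indeed those lemmas appear in Section~\ref{ssec:monotone} exactly because they are the ingredients of this proof (the paper defers the details to \cite{Vig19}). Your remark that one could alternatively phrase the argument via shuffles and the bar complex is the viewpoint of \cite{GS87}. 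So you are not taking a different route from the paper; you are filling in what the paper leaves to its references, and your plan for doing so is sound. The only caveat is the one you already flag: the internal terms $m_jF$ require a slightly more careful case split (according to whether $j$ and $j+1$ are both drops, both non-drops, or mixed), and the sign tracking there is where errors creep in, so if you write it out in full you should check those cases against a small example such as $n=3$.
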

\begin{proof}
The proof of (a) is in \cite{Har62,GS87}.
Part (b) is straightforward.
\end{proof}

\begin{remark}\label{rem:har}
Clearly, $H^0_{\partial,\Har}(A,M)=M$
and $H^1_{\partial,\Har}(A,M)=\Der_{\mb F[\partial]}(A,M)$.
It follows from \cite{GS87} that $H^n_{\partial,\Har}(A,M)$
is a direct summand of the differential Hochschild cohomology 
$HH^n_{\partial}(A,M)$, for $n\geq2$.
\end{remark}

\section{The classical operad and PVA cohomology} \label{sec:PVAcohom}

In this section, we recall some basic notions that will be used throughout the paper, 
and review the construction of the PVA cohomology complex as described in \cite{BDSHK18}. 

\subsection{Symmetric group actions} \label{ssec:symact}

There is a natural left action of $S_n$ on an arbitrary $n$-tuple of objects $(\lambda_{1},\dots , \lambda_n)$:
\begin{equation}\label{eq:acttuple}
\sigma (\lambda_{1},\dots , \lambda_n) = (\lambda_{\sigma^{-1}(1)},\dots , \lambda_{\sigma^{-1}(n)})
\,, \;\;\;\; \sigma\in S_n\,.
\end{equation}
Also, given $V=V_{\bar 0}\oplus V_{\bar 1}$ a vector superspace with parity $p$, we have a linear left action 
of the symmetric group $S_n$ on the tensor product $V^{\otimes n}$ $(\sigma\in S_n,\, v_1,\dots,v_n\in V)$:
\begin{equation}\label{eq:acttensorprod}
\sigma(v_1\otimes\dots\otimes v_n) := \epsilon_v(\sigma) 
\,
v_{\sigma^{-1}(1)}\otimes\dots\otimes v_{\sigma^{-1}(n)}
\,,
\end{equation}
where, following the Koszul--Quillen rule,
\begin{equation}\label{eq:signacttensorprod}
\epsilon_v(\sigma)
:=
\prod_{i<j \,:\, \sigma(i)>\sigma(j)}(-1)^{p(v_i)p(v_j)}
\,.
\end{equation}
In particular, 
if $V$ is purely even $\epsilon_v(\sigma)=1$,
while if $V$ is purely odd $\epsilon_v(\sigma)=\sign(\sigma)$.
The corresponding right action of $S_n$ on the the space $\Hom(V^{\otimes n},V)$ 
is given by $(f\in \Hom(V^{\otimes n}, V),\,\sigma\in S_n)$:
\begin{equation} \label{eq:acthom}
f^{\sigma}(v_1\otimes\dots\otimes v_n)=f(\sigma(v_1\otimes\dots\otimes v_n))\;.
\end{equation}

\subsection{Composition of permutations and shuffles}

Let $n\geq 1$ and $m_1\,,\dots\,,m_n\geq 0$. We introduce the following notation:
\begin{equation} \label{eq:notation}
M_0=0
\,\,\text{ and }\,\,
M_i=\sum_{j=1}^i m_j\,, \quad i=1,\dots,n\,.
\end{equation}
Given $\sigma\in S_n$ and $\tau_1\in S_{m_1}, \dots, \tau_n\in S_{m_n}$, we describe the \emph{composition}  
$$\sigma(\tau_1,\dots,\tau_n)\in S_{M_n}$$
by saying how it acts on the tensor power $V^{\otimes M_n}$ of a vector space $V$:
\begin{equation}\label{eq:comp}
(\sigma(\tau_1,\dots,\tau_n))(v_1\otimes \dots\otimes v_{M_n})
= \sigma(\tau_1(v_1\otimes\dots\otimes v_{M_1})\otimes\dots\otimes 
\tau_n(v_{M_{n-1}+1}\otimes\dots\otimes v_{M_n}))\,.
\end{equation}
\begin{definition}
A permutation $\sigma \in S_{m+n}$ is called an $(m,n)$-\emph{shuffle} if 
\begin{equation*}
\si(1)< \dots < \si(m)\,,\quad \si(m+1)< \dots <\si(m+n)\,.
\end{equation*}
The subset of $(m,n)$-shuffles is denoted by $S_{m,n}\subset S_{m+n}$. 
\end{definition}
Observe that, by definition, $S_{0,n}=S_{n,0}=\{1\}$ for every $n\geq 0$. If either $m$ or $n$ is negative, 
we set $S_{m,n}= \emptyset$ by convention. 

\subsection{$n$-graphs}\label{ssec:ngraphs}

For an oriented graph $\Gamma$, we denoted by $V(\Gamma)$ the set of vertices of $\Ga$, 
and by $E(\Gamma)$ the set of edges. We call $\Ga$ an $n$-\emph{graph} 
if $V(\Gamma)=\{1, \dots, n\}$. Denote by $\mc G(n)$ the set of all $n$-graphs without tadpoles, 
and by $\mc G_0(n)$ the set of all acyclic $n$-graphs. 

An $n$-graph $L$ will be called an $n$-\emph{line}, or simply a \emph{line}, 
if its set of edges is of the form $\{i_1\to i_2,\,i_2\to i_3,\dots,\,i_{n-1}\to i_n\}$,
where $\{i_1,\dots,i_n\}$ is a permutation of $\{1,\dots,n\}$.

We have a natural left action of $S_n$ on the set $\mc G(n)$: 
for the $n$-graph $\Ga$ and the permutation $\sigma$, the new $n$-graph $\sigma(\Ga)$ 
is defined to be the  same graph as $\Ga$ but with the vertex which was labeled as $i$ relabeled 
as $\sigma(i)$, for every $i=1,\dots,n$. 
So, if the $n$-graph $\Ga$ has an oriented edge $i \rightarrow j$, 
then the $n$-graph $\sigma(\Ga)$ has the oriented edge $\sigma(i) \rightarrow \sigma(j)$. 
Note that $S_n$ permutes the set of $n$-lines.

Let us recall the cocomposition of $n$-graphs, as described in \cite{BDSHK18}.
Given an $n$-tuple $(m_1,\dots,m_n)$ of positive integers, let $M_i$ be as in \eqref{eq:notation}. 
If $\Ga\in \mc G(M_n)$, define $\Delta_i^{m_1,\dots,m_n}(\Ga)\in \mc G(m_i)$, $i=1,\dots,n$, 
to be the subgraph of $\Ga$ associated to the set of vertices $\{M_{i-1}+1,\dots,M_{i}\}$, 
relabeled as $\{1,\dots,m_i\}$. Define also $\Delta_0^{m_1,\dots,m_n}(\Ga)$ to be the graph obtained 
from $\Ga$ by collapsing the vertices and the edges of each $\Delta_i^{m_1,\dots,m_n}(\Ga)$ 
into a single vertex, relabeled as $i$. Then the cocomposition map is the map
\begin{align} \label{eq:cocomp}
\Delta^{m_1,\dots,m_n}\colon \mc G(M_n) &\rightarrow \mc G(n)\times\mc G(m_1)\times \dots 
\times \mc G(m_n) \,, \\
\Ga &\mapsto \left(\Delta_0^{m_1,\dots,m_n}(\Ga),\,\Delta_1^{m_1,\dots,m_n}(\Ga),\dots,
\Delta_n^{m_1,\dots,m_n}(\Ga)\right). \notag
\end{align}	
\begin{example}
Let $n=3$, $(m_1,m_2,m_3)=(3,1,4)$, and $\Ga\in \mc G(8)$ be the following graph
\begin{equation}\label{eq:graphexample}
\begin{tikzpicture}
\node at (-1,0) {$\Gamma=$};
\draw[fill] (0,0) circle [radius=0.1];
\node at (0,-0.3) {1};
\draw[fill] (1,0) circle [radius=0.1];
\node at (1,-0.3) {2};
\draw[fill] (2,0) circle [radius=0.1];
\node at (2,-0.3) {3};
\draw[fill] (3,0) circle [radius=0.1];
\node at (3,-0.3) {4};
\draw[fill] (4,0) circle [radius=0.1];
\node at (4,-0.3) {5};
\draw[fill] (5,0) circle [radius=0.1];
\node at (5,-0.3) {6};
\draw[fill] (6,0) circle [radius=0.1];
\node at (6,-0.3) {7};
\draw[fill] (7,0) circle [radius=0.1];
\node at (7,-0.3) {8};
\draw[->] (0.1,0) -- (0.9,0);
\draw[->] (0,-0.1) to [out=270,in=270] (2,-0.1);
\draw[->] (1,0.1) to [out=90,in=90] (3,0.1);
\draw[->] (1,0.1) to [out=90,in=90] (5,0.1);
\draw[->] (1,0.1) to [out=90,in=90] (4,0.1);
\draw[->] (6.1,0) -- (6.9,0);
\draw [dotted,thin] (1,0) circle [radius=1.3];
\draw [dotted,thin] (3,0) circle [radius=0.6];
\draw [dotted,thin] (5.5,0) circle [radius=1.8];
\end{tikzpicture}
\end{equation}
The cocomposition $\Delta^{3,1,4}(\Ga)=\bigl(\Delta_0^{3,1,4}(\Ga),\,\Delta_1^{3,1,4}(\Ga),
\Delta_2^{3,1,4}(\Ga),\Delta_3^{3,1,4}(\Ga)\bigr)$ is given by the following graphs. 
$\Delta_1^{3,1,4}(\Ga)$ is the subgraph of $\Ga$ generated by the first three vertices:
\begin{equation*}
\begin{tikzpicture}
\node at (-1.5,0) {$\Delta_1^{3,1,4}(\Ga)=$};
\draw[fill] (0,0) circle [radius=0.1];
\node at (0,-0.3) {1};
\draw[fill] (1,0) circle [radius=0.1];
\node at (1,-0.3) {2};
\draw[fill] (2,0) circle [radius=0.1];
\node at (2,-0.3) {3};
\draw[->] (0.1,0) -- (0.9,0);
\draw[->] (0,-0.1) to [out=270,in=270] (2,-0.1);
\node at (3,0) {$\in \mc G(3)\,;$};
\end{tikzpicture}
\end{equation*}
$\Delta_2^{3,1,4}(\Ga)$ is the subgraph of $\Ga$ associated to the fourth vertex:
\begin{equation*}
\begin{tikzpicture}
\node at (-1.5,0) {$\Delta_2^{3,1,4}(\Ga)=$};
\draw[fill] (0,0) circle [radius=0.1];
\node at (0,-0.3) {1};
\node at (1,0) {$\in \mc G(1)\,;$};
\end{tikzpicture}
\end{equation*}
$\Delta_3^{3,1,4}(\Ga)$ is the subgraph of $\Ga$ associated to the last four vertices:
\begin{equation*}
\begin{tikzpicture}
\node at (2.5,0) {$\Delta_3^{3,1,4}(\Ga)=$};
\draw[fill] (4,0) circle [radius=0.1];
\node at (4,-0.3) {1};
\draw[fill] (5,0) circle [radius=0.1];
\node at (5,-0.3) {2};
\draw[fill] (6,0) circle [radius=0.1];
\node at (6,-0.3) {3};
\draw[fill] (7,0) circle [radius=0.1];
\node at (7,-0.3) {4};
\draw[->] (6.1,0) -- (6.9,0);
\node at (8,0) {$\in \mc G(4)\,;$};
\end{tikzpicture}
\end{equation*}
and, finally, $\Delta_0^{3,1,4}(\Ga)$ is:
\begin{equation*}
\begin{tikzpicture}
\node at (-1.5,0) {$\Delta_0^{3,1,4}(\Ga)=$};
\draw[fill] (0,0) circle [radius=0.1];
\node at (0,-0.3) {1};
\draw[fill] (1,0) circle [radius=0.1];
\node at (1,-0.3) {2};
\draw[fill] (2,0) circle [radius=0.1];
\node at (2,-0.3) {3};
\draw[->] (0,0.1) to [out=90,in=90] (1,0.1);
\draw[->] (0,0.1) to [out=90,in=90] (2,0.1);
\draw[->] (0,-0.1) to [out=270,in=270] (2,-0.1);
\node at (4,0) {$\in \mc G(3)\,.$};
\end{tikzpicture}
\end{equation*}
\end{example}

From the construction of $\Delta_i^{m_1,\dots,m_n}(\Ga)$, it is easy to see that there is 
a natural bijective correspondence
\begin{equation}\label{eq:delta}
\Delta \colon E(\Ga) \,\stackrel{\sim}{\longrightarrow}\,
E\big(\Delta^{m_1\dots m_n}_0(\Gamma)\big) \sqcup E\big(\Delta^{m_1\dots m_n}_1(\Gamma)\big) 
\sqcup\dots\sqcup E\big(\Delta^{m_1\dots m_n}_n(\Gamma)\big)\,.
\end{equation}
\begin{definition}\label{def:extconnected}
Let $k\in\{1,\dots,M_n\}$ and $j\in\{1,\dots,n\}$.
We say that $j$ is \emph{externally connected} to $k$ (via the graph $\Gamma$
and its cocomposition $\Delta^{m_1\dots m_n}(\Gamma)$)
if there is an unoriented path (without repeating edges) of $\Delta^{m_1\dots m_n}_0(\Gamma)$
joining $j$ to $i$, where $i\in\{1,\dots,n\}$ is such that $k\in\{M_{i-1}+1,\dots,M_i\}$,
and the edge out of $i$ is the image, via the map $\Delta$ in \eqref{eq:delta},
of an edge which has its head or tail in $k$. Given a set of variables $x_1,\dots,x_{n}$,
we denote
\begin{equation}\label{eq:X(k)}
X(k) = \sum_{\substack{j \text{ externally} \\ \text{connected to } k}} x_j
\,.
\end{equation}
\end{definition}

\begin{example}
For the graph \eqref{eq:graphexample}, we have
\begin{align*}
&X(1)=0, \;X(2)=x_1+x_2+x_3, \;X(3)=0, \;X(4)=x_1+x_3,\; X(5)=x_1+x_2+x_3, \\
&X(6)=x_1+x_2+x_3, \;X(7)=0, \;X(8)=0.
\end{align*}
\end{example}

\subsection{Lie conformal algebras and Poisson vertex algebras} \label{ssec:lieconformalalgPVA}

\begin{definition}
A Lie conformal (super)algebra is a vector (super)space $V$, 
endowed with an even endomorphism $\partial\in\End(V)$
and a bilinear (over $\mb F$) $\lambda$-bracket
$[\cdot\,_\lambda\,\cdot]\colon V\times V\to V[\lambda]$
satisfying 
sesquilinearity ($a,b\in V$):
\begin{equation}\label{eq:sesquilinearity}
[\partial a_\lambda b]=-\lambda[a_\lambda b]
\,,\qquad
[a_\lambda \partial b]=(\lambda+\partial)[a_\lambda b]
\,,
\end{equation}
skewsymmetry ($a,b\in V$):
\begin{equation}\label{eq:skewsymmetry}
[a_\lambda b]=-(-1)^{p(a)p(b)}[b_{-\lambda-\partial}a]
\,,
\end{equation}
and the Jacobi identity ($a,b,c\in V$):
\begin{equation}\label{eq:jacobi}
[a_{\lambda}[b_\mu c]]-(-1)^{p(a)p(b)}[b_\mu [a_\lambda ,b]]
=[[a_\lambda b]_{\lambda+\mu}c]
\,.
\end{equation}
\end{definition}
\begin{definition}	
A Poisson vertex (super)algebra (PVA) is a commutative associative (super)algebra $V$ 
endowed with an even derivation $\partial$ 
and a Lie conformal (super)algebra $\lambda$-bracket $[\cdot_{\lambda} \cdot]$ 
that satisfies the left Leibniz rule
\begin{equation} \label{eq:leibniz}
[a_{\la} bc] = [a_\la b]c + (-1)^{p(a)p(b)}b[a_\la c]\,.
\end{equation}
\end{definition}

\subsection{Operads} \label{ssec:operad}

Recall that 
a (linear, unital, symmetric) \emph{superoperad} $\mc P$ is a collection of vector superspaces 
$\mc P(n)$, $n\geq0$, with parity $p$,
endowed, for every $f\in\mc P(n)$ and $m_1,\dots,m_n\geq0$, with a \emph{composition} 
parity preserving linear map, 
\begin{equation}\label{eq:operad1}
\begin{split}
\mc P(n) \otimes \mc P(m_1)\otimes\dots\otimes\mc P(m_n)\,\,&\to\,\,\mc P(M_n) \,, \\
f \otimes g_1 \otimes\dots\otimes g_n \,\, &\mapsto \,\, f(g_1\otimes\dots\otimes g_n) \,,
\end{split}
\end{equation}
where $M_n$ is as in \eqref{eq:notation},
satisfying the following associativity axiom:
\begin{equation}\label{eq:operad2}
f\big(
(g_1\otimes\dots\otimes g_n)
(h_1\otimes\dots\otimes h_{M_n})
\big)
=
\big(f
(g_1\!\otimes\dots\otimes\! g_n)
\big)
(h_1\otimes\dots\otimes h_{M_n})
\,\in\mc P\Big(\sum_{j=1}^{M_n}\ell_j\Big)
\,,
\end{equation}
for every $f\in\mc P(n)$, $g_i\in\mc P(m_i)$ for $i=1,\dots,n$,
and $h_{j}\in\mc P(\ell_{j})$ for $j=1,\dots,M_n$.
In the left-hand side of \eqref{eq:operad2}
the linear map 
$$g_1\!\otimes\dots\otimes\! g_n\colon \bigotimes_{j=1}^{M_n}\mc P(\ell_j)
\to\bigotimes_{i=1}^n\mc P\Bigl(\sum_{j=M_{i-1}+1}^{M_i}\ell_{j}\Bigr)$$
is the tensor product of composition maps applied to
$$
h_1\otimes\dots\otimes h_{M_n} = (h_1\otimes\dots\otimes h_{M_1}) 
\otimes (h_{M_1+1}\otimes\dots\otimes h_{M_2})
\otimes\dots\otimes (h_{M_{n-1}+1}\otimes\dots\otimes h_{M_n}).
$$

We assume that $\mc P$ is endowed with a \emph{unit} element $1\in\mc P(1)$
satisfying the following unity axioms: 
\begin{equation}\label{eq:operad3}
f(1\otimes\dots\otimes 1)=1(f)=f
\,,\,\,\text{ for every }\,\,f\in\mc P(n)
\,.
\end{equation}
Furthermore, we assume that,
for each $n\geq1$, $\mc P(n)$ has a right action of the symmetric group $S_n$,
denoted $f^\sigma$, for $f\in\mc P(n)$ and $\sigma\in S_n$,
satisfying the following equivariance axiom
($f\in\mc P(n)$, $g_1\in\mc P(m_1),\dots,g_n\in\mc P(m_n)$,
$\sigma\in S_n$, $\tau_1\in S_{m_1},\dots,\tau_n\in S_{m_n}$):
\begin{equation}\label{eq:operad4}
f^\sigma(g_1^{\tau_1}\otimes \dots\otimes g_n^{\tau_n})
=
\big(f(\sigma(g_1\otimes\dots\otimes g_n))\big)^{\sigma(\tau_1,\dots,\tau_n)}
\,,
\end{equation}
where the left action of $\sigma\in S_n$ on the tensor product of vector superspaces
was defined in \eqref{eq:acttensorprod}, and the composition $\sigma(\tau_1,\dots,\tau_n)$ 
is described in \eqref{eq:comp}.

For simplicity, from now on, we will use the term operad in place of superoperad. Given an operad $\mc P$, 
one defines, for each $i=1,\dots,n$, 
the $\circ_i$-product
$\circ_i\colon\mc P(n)\otimes\mc P(m)\to\mc P(n+m-1)$
by insertion in position $i$, i.e.
\begin{equation}\label{eq:operad8}
f\circ_i g=f(
\overbrace{1\otimes\dots\otimes 1}^{i-1}
\otimes\stackrel{\vphantom{\Big(}i}{g}\otimes
\overbrace{1\otimes\dots\otimes1}^{n-i})\,.
\end{equation}
\begin{example}
The simplest example of an operad is $\mc P= \mc{H}om$. Given a vector superspace $V$, 
$\mc{H}om=\mc{H}om(V)$ is defined as the collection of
\begin{equation*}
\mc{H}om(n):= \Hom(V^{\otimes n}, V)\,,
\qquad n\geq 0 \,,
\end{equation*}
endowed with the composition maps ($f\in\mc{H}om(n)$, $g_i\in \mc{H}om(m_i)$ for $i=1,\dots,n$, $v_j\in V$ 
for $j=1,\dots,M_n$)
\begin{equation*}
(f(g_1\otimes\dots\otimes g_n))(v_1\otimes\dots\otimes v_{M_n}):= f((g_1\otimes\dots
\otimes g_n) (v_1\otimes\dots\otimes v_{M_n}))\,,
\end{equation*}
where $M_n$ is as in \eqref{eq:notation}. $\mc{H}om$ is a unital operad with unity $1={\id}_{V}\in \End(V)$, 
and the right action of $S_n$ on $\mc{H}om(n)$ is given by \eqref{eq:acthom}.
\end{example}

\subsection{The $\mb{Z}$-graded Lie superalgebra associated to an operad} \label{ssec:liesuperalgebra}

Recall that, given an operad $\mc P$, one can construct the associated $\mb{Z}$-graded Lie superalgebra $W (\mc P)$. It is defined, as a $\mb Z$-graded vector superspace
\begin{equation}\label{eq:liesuperalg}
W (\mc P) 	= \sum_{n\ge -1} W^n (\mc P) = \sum_{n\ge -1} \mc P(n+1)^{S_{n+1}}\,,
\end{equation}
with the following Lie bracket.
For $f\in W^n(\mc P)$ and $g\in W^m(\mc P)$, their $\Box$-product is defined by
\begin{equation}\label{eq:box}
f\Box g
=
\sum_{\sigma\in S_{m+1,n}}
(f\circ_1 g)^{\sigma^{-1}}
\,\in W^{m+n}(\mc P)\,
\end{equation}
and the Lie bracket on $W(\mc P)$ is given by 
\begin{equation}\label{eq:liebracket}
[f,g] = f\Box g -(-1)^{p(f)p(g)} g\Box f \,.
\end{equation}
See e.g.\ \cite[Sec. 3]{BDSHK18} for details.

\subsection{The classical operad $\mc P_{\cl}$ \cite{BDSHK18}}\label{ssec:cloperad}

Let $V=V_{\bar 0}\oplus V_{\bar 1}$ be a vector superspace with parity $p$, endowed with an even 
endomorphism $\partial\in\End V$. For $n\ge 0$, define $\mc P_{\cl} (n)$ as the vector superspace of all maps
\begin{equation}\label{eq:operadcl}
f\colon \mc G(n)\times V^{\otimes n} \longrightarrow 
V[\lambda_1,\dots,\lambda_n]\big/\langle\partial+\lambda_1+\dots+\lambda_n\rangle\,,
\end{equation}
which are linear in the second factor, mapping the $n$-graph $\Gamma\in\mc G(n)$ 
(by definition $p(\Gamma)=\bar0$)
and the monomial $v_1\otimes\,\cdots\,\otimes v_n\in V^{\otimes n}$ to the polynomial
\begin{equation}\label{eq:imcl}
f^{\Gamma}_{\lambda_1,\dots,\lambda_n}(v_1\otimes\,\cdots\,\otimes v_n)\,,
\end{equation}
satisfying the {cycle relations} and the {sesquilinearity conditions} described as follows.

The \emph{cycle relations} say that
\begin{equation}\label{eq:cycle1}
\text{if }\,\Gamma \notin \mc G_0(n) \,,\;\;\text{ then}\;\; f^{\Gamma}=0 \,,
\end{equation}
and 
\begin{equation}\label{eq:cycle2}
\text{if }\, C\subset E(\Gamma) \,\;\text{ is an oriented cycle of } \, \Gamma\,,\,\;\text{ then }\,\;
\sum_{e\in C}f^{\Gamma\backslash e} =0 \,,
\end{equation}
where $\Gamma\backslash e$ is the graph obtained from $\Gamma$ by removing the edge $e$.
Observe that for oriented cycles of length $2$, the cycle relation \eqref{eq:cycle2} means 
that changing orientation of a single edge of the $n$-graph $\Gamma\in\mc G(n)$ amounts 
to a change of sign of $f^{\Gamma}$.

The \emph{sesquilinearity conditions} are as follows.
Let $\Gamma=\Gamma_1\sqcup\dots\sqcup\Gamma_s$
be the decomposition of $\Gamma$ as a disjoint union of its connected components,
and let $I_1,\dots,I_s\subset\{1,\dots,n\}$ be the sets of vertices associated to these
connected components. 
For a graph $\tilde{\Ga}$ and its set of vertices $\tilde{I}\subset \{1,\dots,n\}$, we write
\begin{equation}\label{eq:notation3}
\lambda_{\tilde{\Ga}} 
= \sum_{i\in \tilde{I}} \lambda_{i}\,,\qquad \partial_{\tilde{\Ga}}
= \sum_{i\in \tilde{I}} \partial_i\,,
\end{equation}
where $\partial_i$ denotes the action of $\partial$ on the $i$-th factor in the tensor product $V^{\otimes n}$. 
Then, for every $\alpha=1,\dots,s$,
\begin{equation}\label{eq:sesq1}
\frac{\partial}{\partial \lambda_i}
f^{\Gamma}_{\lambda_1,\dots,\lambda_n}(v_1\otimes\dots\otimes v_n)
\,\text{ is the same for all }\,
i\in I_\alpha
\,
\end{equation}
and 
\begin{equation}\label{eq:sesq2}
f^{\Gamma}_{\lambda_1,\dots,\lambda_n}
(\partial_{\Gamma_\alpha}(v_1\otimes\dots\otimes v_n))
=-\lambda_{\Gamma_\alpha}
f^{\Gamma}_{\lambda_1,\dots,\lambda_n}(v_1\otimes\dots\otimes v_n)
\,.
\end{equation}
Observe that the second sesquilinearity condition \eqref{eq:sesq2} implies
\begin{equation}\label{eq:sesq2.1}
f^{\Gamma}_{\lambda_1,\dots,\lambda_n}(\partial v) = 
-\sum_{i=1}^n \la_i \, f^{\Gamma}_{\lambda_1,\dots,\lambda_n}(v)
= \partial \bigl( f^{\Gamma}_{\lambda_1,\dots,\lambda_n}(v) \bigr), 
\qquad v\in V^{\otimes n}\,.
\end{equation}

\begin{remark}\label{rem:connected}
When the graph $\Gamma$ is connected, the first sesquilinearity condition \eqref{eq:sesq1} implies that
$f^{\Gamma}_{\lambda_1,\dots,\lambda_n}(v_1\otimes\dots\otimes v_n)$ is a polynomial of
$\lambda_1+\dots+\lambda_n$. Hence, it is an element of
$$
V[\lambda_1+\dots+\lambda_n]\big/\langle\partial+\lambda_1+\dots+\lambda_n\rangle
\simeq V \,.
$$
In this case, we will omit the subscripts of $f^{\Gamma}$.
\end{remark}

The classical operad $\mc P_{\cl}(V)$ is defined as the collection 
of the vector superspaces $\mc P_{\cl} (n)$, $n\ge 0$, endowed, for every $f\in \mc P_{\cl} (n)$ 
and $m_1,\dots, m_n \ge 0$, with the composition parity preserving linear map
\begin{equation*}
\begin{split}
\mc P_{\cl}(n) \otimes \mc P_{\cl}(m_1)\otimes\dots\otimes \mc P_{\cl}(m_n)\,\,&\to\,\,\mc P_{\cl}(M_n) \,, \\
f \otimes g_1 \otimes\dots\otimes g_n \,\, &\mapsto \,\, f(g_1,\dots,g_n) \,,
\end{split}
\end{equation*}
described as follows. Let $M_i$ be as in \eqref{eq:notation}, and 
\begin{equation} \label{eq:notation2}
\La_i = \sum_{j=M_{i-1}+1}^{M_i} \la_{j}\,, \qquad  i=1,\dots,n\,.
\end{equation}
If $\Gamma\in\mc G(M_n)$, then
\begin{equation*}
(f(g_1,\dots,g_n))^\Gamma:\, V^{\otimes M_n}
\to
V[\lambda_1,\dots,\lambda_{M_n}] \big/ \langle \partial+\lambda_1+\dots+\lambda_{M_n} \rangle
\end{equation*}
is defined by the formula:
\begin{equation}\label{eq:composition}
\begin{array}{l}
\displaystyle{
\vphantom{\Big(}
(f(g_1,\dots,g_n))^\Gamma_{\lambda_1,\dots,\lambda_{M_n}}
(v_1\otimes\dots\otimes v_{M_n})
} \\
\displaystyle{
\vphantom{\Big(}
=
f^{\Delta^{m_1\dots m_n}_0(\Gamma)}_{\Lambda_1,\dots,\Lambda_n}
\bigg(
\Big(
\Big(
\Big|_{x_1=\Lambda_1+\partial}
(g_1)^{\Delta^{m_1\dots m_n}_1(\Gamma)}_{\lambda_{1}+X(1),\dots,\lambda_{M_1}+X(M_1)}
\Big)
\otimes\cdots
} \\
\displaystyle{
\vphantom{\Big(}
\,\,\,\,\,\,\,\,\,
\dots\otimes
\Big(
\Big|_{x_n=\Lambda_n+\partial}
(g_n)^{\Delta^{m_1\dots m_n}_n(\Gamma)}_{
\lambda_{M_{n-1}+1}+X(M_{n-1}+1),\dots,\lambda_{M_n}+X(M_n)}
\Big)
\Big)
(v_1\otimes\dots\otimes v_{M_n})
\bigg)
}
\end{array}
\end{equation}
where $\Delta^{m_1,\dots,m_n}(\Ga)$ is the cocomposition of $\Ga$ described 
in Section \ref{ssec:ngraphs}, $X(1),\dots$ $\dots,X(M_n)$ are the variables as in \eqref{eq:X(k)}, 
and the notation is as follows. For given graphs $\Gamma_1\in\mc G(m_1),\,\dots,\Gamma_n\in\mc G(m_n)$, 
we have:
\begin{equation}\label{eq:tensorprodformula2}
\begin{split}
&\left( \left(g_1\right)^{\Gamma_1}_{\lambda_{1},\dots,\lambda_{M_1}}
 \otimes\dots\otimes 
\left(g_n\right)^{\Gamma_n}_{\lambda_{M_{n-1}+1},\dots,\lambda_{M_n}} \right) 
(v_1\otimes\dots\otimes v_{M_n}) \\
&:= (-1)^{\sum_{i<j} p(g_j)(p(v_{M_{i-1}+1})+ \dots + p(v_{M_i}))} \,
(g_1)^{\Ga_1}_{\lambda_{1},\dots,\lambda_{M_1}}\!\!(v_1\otimes\dots\otimes v_{M_1})
\otimes\cdots \\
&\qquad \qquad\qquad\qquad\qquad\qquad \dots \otimes (g_n)^{\Gamma_n}_{\lambda_{M_{\!n\!-\!1}\!+1},
\dots,\lambda_{M_n}}\!\!(v_{M_{\!n\!-\!1}\!+1}\otimes\dots\otimes v_{M_n}),
\end{split}	
\end{equation}
and for polynomials $P(\lambda)=\sum_mp_m\lambda^m$ and 
$Q(\mu)=\sum_nq_n\mu^n$ with coefficients in $V$,
we write
\begin{equation}\label{eq:polynomials}
\big(\big|_{x=\partial}P(\lambda+y)\big)
\otimes
\big(\big|_{y=\partial}Q(\mu+x)\big)
=
\sum_{m,n}((\mu+\partial)^np_m)\otimes((\lambda+\partial)^mq_n)\,.
\end{equation}

For each $n\ge 1$, $\mc P_{\cl}(n)$ has a natural right action of the symmetric group $S_n$,
which is given by ($f\in \mc P_{\cl}(n)$, $\Gamma\in\mc G(n)$, $v_1,\dots,v_n\in V$):
\begin{equation}\label{eq:actclassicoperad}
(f^\sigma)^\Gamma_{\lambda_1,\dots,\lambda_n}(v_1\otimes\cdots\otimes v_n)
=
f^{\sigma(\Gamma)}_{\sigma(\lambda_1,\dots,\lambda_n)}
(\sigma(v_1\otimes\cdots\otimes v_n))
\,,  
\end{equation}
where 
$\sigma(\lambda_1,\dots,\lambda_n)$ is defined by \eqref{eq:acttuple},
$\sigma(v_1\otimes\dots\otimes v_n)$ is defined by \eqref{eq:acttensorprod},
and $\sigma(\Gamma)$ is defined in Section \ref{ssec:ngraphs}. 

On the space $\mc P_{\cl} (n)$ we can also define a grading:
\begin{equation} \label{eq:clgrading}
	\mc P_{\cl} (n) = \bigoplus _{r\geq 0} \gr^r \mc P_{\cl} (n)\,,
\end{equation}
where $\gr^r \mc P_{\cl} (n)$ is the subspace of all maps in $\mc P_{\cl} (n)$ vanishing on graphs with a number 
of edges not equal to $r$. Then $\mc P_{\cl}$ is a graded operad, i.e., the compositions and the actions of the 
symmetric groups are compatible with the grading.

\subsection{PVA cohomology \cite{BDSHK18}} \label{ssec:PVAcohom}

Given a vector superspace $V$ with parity $p$, 
and an even endomorphism $\partial\in \End(V)$, 
let $\Pi V$ be the same vector space with reversed parity $\bar{p}=1-p$, 
and consider the corresponding classical operad $\mc P_{\cl} (\Pi V)$ from Section \ref{ssec:cloperad}. 
The associated $\mb Z$-graded Lie superalgebra is $W_\cl (\Pi V):= W(\mc P_{\cl} (\Pi V))$, 
with Lie bracket defined by \eqref{eq:liebracket}.
\begin{theorem}[{\cite[Theorem 10.7]{BDSHK18}}]\label{thm:PVAstructure}
We have a bijective correspondence between 
the odd elements $X\in W^1_{\cl}(\Pi V)$, such that $X\Box X=0$,
and the Poisson vertex superalgebra structures on $V$,
defined as follows.
The  commutative associative product and the $\lambda$-bracket 
of the Poisson vertex superalgebra $V$ corresponding to $X$
are given by
\begin{equation}\label{eq:PVAstructure}
ab=(-1)^{p(a)}
X^{\bullet\!-\!\!\!\!\to\!\bullet}(a\otimes b)
\,\,,\,\,\,\,
[a_\lambda b]=(-1)^{p(a)}X^{\bullet\,\,\bullet}_{\lambda,-\lambda-\partial}(a\otimes b)
\,.
\end{equation}
\end{theorem}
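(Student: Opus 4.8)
The plan is to describe both sides of the bijection concretely and then to match the defining conditions grouped by the number of edges of the underlying graphs. First I would unwind what $W^1_{\cl}(\Pi V)=\mc P_{\cl}(\Pi V)(2)^{S_2}$ is. The cycle relations \eqref{eq:cycle1}--\eqref{eq:cycle2} force every element of $\mc P_{\cl}(\Pi V)(n)$ to vanish on graphs with an oriented cycle and on graphs with a repeated edge (reverse one of a pair of parallel edges to create a $2$-cycle and apply \eqref{eq:cycle2}), so $\mc P_{\cl}(\Pi V)(2)=\gr^0\oplus\gr^1$ and an element $X$ is exactly a pair $\bigl(X^{\bullet\ \bullet}_{\lambda_1,\lambda_2},\,X^{\bullet\to\bullet}\bigr)$ of linear maps $(\Pi V)^{\otimes2}\to(\Pi V)_2$. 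On the disconnected graph the first sesquilinearity \eqref{eq:sesq1} is vacuous and the second \eqref{eq:sesq2} makes $X^{\bullet\ \bullet}$ sesquilinear in each argument; on the connected graph, by Remark \ref{rem:connected}, $X^{\bullet\to\bullet}$ is a morphism of $\mb F[\partial]$-modules. I would then check that, after the sign normalization $(-1)^{p(a)}$ of \eqref{eq:PVAstructure}, the oddness of $X$ makes $ab$ and $[a_\lambda b]$ parity-preserving and turns the operadic signs into Koszul signs, the sesquilinearity of $X^{\bullet\ \bullet}$ becomes the sesquilinearity \eqref{eq:sesquilinearity} of the $\lambda$-bracket, the $\mb F[\partial]$-linearity of $X^{\bullet\to\bullet}$ becomes the derivation property of $\partial$, and the $S_2$-invariance $X^\sigma=X$ (spelled out via \eqref{eq:actclassicoperad}, using that reversing the single edge flips the sign) becomes skewsymmetry \eqref{eq:skewsymmetry} of the bracket and commutativity of the product.

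Next I would impose the remaining condition. Since $X$ is odd and $\mb F$ has characteristic $0$, $[X,X]=2\,X\Box X$, so I must show that $X\Box X=0$ is equivalent to the Jacobi identity \eqref{eq:jacobi}, the left Leibniz rule \eqref{eq:leibniz}, and associativity. By \eqref{eq:box}, $X\Box X=\sum_{\sigma\in S_{2,1}}(X\circ_1 X)^{\sigma^{-1}}\in W^2_{\cl}(\Pi V)=\mc P_{\cl}(\Pi V)(3)^{S_3}$, and it vanishes iff it vanishes on every $3$-graph; by $S_3$-equivariance and the cycle relations it is enough to test it on representatives of the $S_3$-orbits of simple acyclic $3$-graphs — concretely the edgeless graph, the one-edge graph, and the two-edge graphs, the three-edge graphs contributing no new relation because their cocompositions land on graphs with parallel or oppositely-oriented edges, where $X$ vanishes. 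For each such $\Gamma$ I would expand $X\circ_1 X=X(X\otimes 1)$ through \eqref{eq:composition} with $(m_1,m_2)=(2,1)$: the cocomposition $\Delta^{2,1}(\Gamma)$ dictates which of $X^{\bullet\ \bullet}$, $X^{\bullet\to\bullet}$ is the outer and which the inner operation, while the variables $\Lambda_1=\lambda_1+\lambda_2$, $\Lambda_2=\lambda_3$, the shift variables $X(k)$ of \eqref{eq:X(k)}, and the substitutions $\big|_{x_i=\Lambda_i+\partial}$ produce the correct $\lambda$-shifts in $V_3$.

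Carrying out these expansions, I expect the edgeless graph to reproduce the Jacobi identity for the $\lambda$-bracket (the shuffle sum over $S_{2,1}$ together with the $S_3$-symmetrization assembling the cyclic terms), the one-edge graphs — where one of $X^{\bullet\ \bullet}$, $X^{\bullet\to\bullet}$ is outer and the other inner — to reproduce the left Leibniz rule relating bracket and product, and the two-edge graphs to reproduce associativity of the product (after using commutativity to identify the reordered terms produced by the shuffle sum). This proves that an odd $X\in W^1_{\cl}(\Pi V)$ with $X\Box X=0$ yields a PVA structure on $V$ via \eqref{eq:PVAstructure}. For the converse I would start from a PVA structure, define $X$ by \eqref{eq:PVAstructure}, use the first paragraph to see that $X$ is a well-defined odd $S_2$-invariant element of $W^1_{\cl}(\Pi V)$, and run the matching of the third paragraph backwards to get $X\Box X=0$; since $X$ both determines and is determined by the product and the $\lambda$-bracket, the correspondence is bijective.

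The hard part will be entirely in the third step: faithfully expanding \eqref{eq:composition} with the graph-dependent shifts $X(k)$, keeping track of the quotient by $\langle\partial+\lambda_1+\lambda_2+\lambda_3\rangle$, of the Koszul signs coming from $\Pi V$ and from \eqref{eq:tensorprodformula2}, of the shuffle sum over $S_{2,1}$ and of the $S_3$-symmetrization, and then recognizing the output on each orbit representative as \emph{precisely} one of the three PVA axioms — no extra conditions. The only delicate point in the reduction itself is the assertion that no further conditions are hidden in $3$-graphs with more or with multiple edges; this is exactly what the cycle relations \eqref{eq:cycle1}--\eqref{eq:cycle2} take care of.
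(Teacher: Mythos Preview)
The paper does not give its own proof of this theorem: it is quoted verbatim as \cite[Theorem 10.7]{BDSHK18} and used as input for the rest of the paper. So there is nothing here to compare your argument against.

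That said, your outline is the standard one and is essentially how the result is proved in \cite{BDSHK18}. A few comments that may sharpen it. First, you do not need to argue separately about ``three-edge graphs'': since $\mc P_{\cl}$ is a graded operad (see \eqref{eq:clgrading}) and $X=X^{(0)}+X^{(1)}$ with $X^{(r)}\in\gr^r$, the element $X\Box X$ decomposes as $X^{(0)}\Box X^{(0)}+(X^{(0)}\Box X^{(1)}+X^{(1)}\Box X^{(0)})+X^{(1)}\Box X^{(1)}$, lying in $\gr^0\oplus\gr^1\oplus\gr^2$ of $\mc P_{\cl}(\Pi V)(3)$; the vanishing of each piece is exactly Jacobi, Leibniz, and associativity. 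This replaces the somewhat ad hoc remark about cocompositions landing on graphs with parallel edges. Second, you invoke an ``$S_3$-symmetrization'' in the edgeless case, but there is none: $X\Box X$ is already $S_3$-invariant by construction, and the three shuffle terms from $S_{2,1}$ already give the three Jacobi terms (up to the Koszul signs). Finally, once you have reduced to lines via Theorem \ref{thm:lines}, the orbit representatives are forced and the bookkeeping in \eqref{eq:composition} is routine, if tedious; the genuine content is all in the grading/line-basis reduction, which you have identified correctly.
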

Thanks to the Jacobi identity for the Lie superalgebra $W_\cl(\Pi V)$, if $X\in W^1_\cl(\Pi V)_{\bar{1}}$ 
satisfies $X\Box X=0$, then $(\ad X)^2=0$. In view of Theorem \ref{thm:PVAstructure}, this means that 
we have a cohomology complex
\begin{equation*}
(W_{\cl}(\Pi V),\ad X)\,,
\end{equation*}
called the
\emph{PVA cohomology complex}, where $X\in W^1(\Pi V)_{\bar 1}$ is given by \eqref{eq:PVAstructure}.
 
\section{Relation between PVA and differential Harrison cohomology complexes} \label{sec:main}

\subsection{Main theorem}

Let $V$ be a Poisson vertex algebra. By Theorem \ref{thm:PVAstructure}, 
we have an odd element $X\in W_\cl (\Pi V)$ such that $[X,X]=0$, 
which is associated to the PVA structure of $V$ by \eqref{eq:PVAstructure}. 
Thus, there is the PVA cohomology complex 
\begin{equation} \label{eq:complex1}
\left(W_\cl(\Pi V),\ad X\right)\,.
\end{equation}
A classical $n$-cochain is an element $Y\in W^{n-1}_{\cl}(\Pi V)$, namely a map
\begin{equation}\label{eq:clcochain}
Y\colon \mc G(n)\times (\Pi V)^{\otimes n} \longrightarrow 
(\Pi V)[\lambda_1,\dots,\lambda_n] \big/ \langle \partial+\lambda_1+\dots+\lambda_n \rangle
\,,
\end{equation}
satisfying relations \eqref{eq:cycle1},  \eqref{eq:cycle2},  \eqref{eq:sesq1}, \eqref{eq:sesq2}, 
and the following symmetry property (by definition \eqref{eq:liesuperalg}):
\begin{equation} \label{eq:symmetry}
Y^\sigma=Y\,,\quad \forall\,\sigma\in S_{n}\;.
\end{equation}

Recall the grading of the superoperad $\mc P_{\cl} (\Pi V)$ from \eqref{eq:clgrading}: $\gr^r\,W^{n-1}_\cl(\Pi V)$ 
is the set of maps $Y$ as in \eqref{eq:clcochain} such that 
$$
Y^{\Ga}=0 \;\;\text{ unless }\;\; |E(\Ga)| =r\,.
$$
Note that if $\Ga\in \mc G(n)$ has $| E(\Ga)| \geq n$, then necessarily $\Ga$ contains a cycle. 
Hence, by the cycle relation \eqref{eq:cycle1}, $Y^{\Ga}=0$. Therefore the top degree 
in $\gr W^{n-1}_{\cl}(\Pi V)$ is $n-1$, i.e.
\begin{equation}\label{eq:maxgr}
\gr^{r}\,W^{n-1}_\cl(\Pi V)=0 \;\;\text{ if }\;\; r\geq n\,.
\end{equation} 
Note that, if $\Ga\in \mc G_{0}(n)$, then $| E(\Ga)| =n-1$ if and only if $\Ga$ is connected.
By Remark \ref{rem:connected},
the top degree subspace $\gr^{n-1}W^{n-1}_{\cl}(\Pi V)$ consists of all collections of maps
\begin{equation}\label{eq:number}
Y^{\Ga}\colon (\Pi V)^{\otimes n} \longrightarrow (\Pi V)\,, \quad\text{ for }\; \Ga\in \mc G_{0}(n),\;\; |E(\Ga)|=n-1 \,,
\end{equation}
satisfying \eqref{eq:cycle1},  \eqref{eq:cycle2}, \eqref{eq:symmetry}, 
and $Y^{\Ga}(\partial(v_1\otimes\dots\otimes v_n))=\partial Y^{\Ga}(v_1\otimes\dots\otimes v_n)$. 
If $\Ga$ is not connected, then $Y^{\Ga}=0$.

In addition, as explained in Section \ref{ssec:Harrison}, there is another cohomology complex associated 
to $V$, viewed as a differential algebra, namely the differential Harrison complex
\begin{equation} \label{eq:complex2}
(C_{\partial,\Har}(V),d)\,,
\end{equation}
where $C^n_{\partial,\Har}(V)\subset \Hom_{\mb F[\partial]}(V^{\otimes n},V)$ is defined by 
Harrison's conditions \eqref{eq:harrisoncond} and $d$ is 
the Hochschild differential \eqref{eq:hochdifferential}.

The main result of this paper is the following:
\begin{theorem} \label{thm:main}
Let\/ $V$ be a Poisson vertex algebra. 
One has a surjective morphism of cochain complexes 
\begin{equation} \label{eq:morphism}
(W_{\cl}(\Pi V), \ad X) \rightarrow (C_{\partial,\Har}(V),d)\,,
\end{equation}
mapping\/ $Y\in W^{n-1}_{\cl}(\Pi V)$ to\/ $Y^{\La_n}$, where\/ $\La_n$ is the standard $n$-line
\begin{equation} \label{eq:Ln}
\begin{tikzpicture}[scale=1.5]
\node at (-0.5,0){$\La_n=$};
\draw[fill] (0,0) circle [radius=0.02];
\node at (0,-0.2){\tiny{$1$}};
\draw[->] (0,0) to (0.3,0);
\draw[fill] (0.3,0) circle [radius=0.02];
\node at (0.3,-0.2){\tiny{$2$}};
\draw[->] (0.3,0) to (0.6,0);
\node at (0.7,0) {.};
\node at (0.75,0) {.};
\node at (0.8,0) {.};
\draw[->] (0.9,0) to (1.2,0);
\draw[fill] (1.2,0) circle [radius=0.02];
\node at (1.2,-0.2){\tiny{$n$}};
\end{tikzpicture}
\end{equation}
Moreover, the morphism \eqref{eq:morphism} restricts to a bijective linear map on the top degree:
\begin{equation} \label{eq:topmorphism}
\gr^{n-1}W^{n-1}_{\cl}(\Pi V) \xrightarrow[]{\sim} C^n_{\partial,\Har}(V)\,.
\end{equation}
\end{theorem}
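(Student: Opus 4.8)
The plan is to check three things in turn: (i) that $Y\mapsto Y^{\La_n}$ lands in the differential Harrison complex $C^n_{\partial,\Har}(V)$; (ii) that it intertwines $\ad X$ with the Hochschild differential $d$; and (iii) that on the top-degree piece $\gr^{n-1}W^{n-1}_\cl(\Pi V)$ it is a linear bijection onto $C^n_{\partial,\Har}(V)$, from which surjectivity of the full map also follows. For (i), first note that $\La_n$ is connected, so by Remark~\ref{rem:connected} the element $Y^{\La_n}$ is just a map $(\Pi V)^{\otimes n}\to\Pi V$; the second sesquilinearity condition \eqref{eq:sesq2} (in the form \eqref{eq:sesq2.1}) says exactly that $Y^{\La_n}$ commutes with $\partial$, so it lies in $\Hom_{\mb F[\partial]}(V^{\otimes n},V)$. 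The substance is the Harrison condition $L_kY^{\La_n}=Y^{\La_n}$ for $2\le k\le n$. Here I would use the cycle relations: applying the $S_n$-symmetry \eqref{eq:symmetry} to rewrite $Y^{\sigma(\La_n)}$ in terms of $Y^{\La_n}$ with permuted arguments, one finds that $Y^{\sigma(\La_n)}=\sign(\sigma_{\text{odd}})\cdot Y^{\La_n}\circ\sigma$ up to the Koszul sign (recall $\Pi V$ is odd, so $\epsilon_v(\sigma)=\sign\sigma$), and then observe that $\sigma(\La_n)$ differs from $\La_n$ by a graph with the \emph{same underlying line but reoriented edges}. Each reorientation of a length-2 piece flips the sign of $Y$ by the remark following \eqref{eq:cycle2}, and the bookkeeping of these sign flips against $\sign\sigma$ for $\sigma\in\mc M_n^k$ is precisely the content of \eqref{eq:drop}: $(-1)^{\dr(\pi)}=(-1)^{k-1}\sign(\pi)$. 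Summing over $\pi\in\mc M_n^k$ and using \eqref{eq:A}, the identity $L_kY^{\La_n}=Y^{\La_n}$ should reduce to a combinatorial identity about how many monotone permutations carry the line $\La_n$ to a given reorientation; I expect the cycle relation \eqref{eq:cycle2} applied to the $2$-cycles to collapse all but the identity contribution.

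For step (ii) I would compute $(\ad X)(Y)^{\La_n}=[X,Y]^{\La_n}$ directly from the definitions \eqref{eq:box}, \eqref{eq:liebracket} of the bracket and \eqref{eq:composition} of the operadic composition in $\mc P_\cl$. The key point is that in $(X\circ_1 Y)^{\La_{n+1}}$ and $(Y\circ_i X)^{\La_{n+1}}$, the cocomposition $\Delta^{m_1\dots m_n}(\La_{n+1})$ of a line always produces lines as its components, and the $X(k)$ correction variables in \eqref{eq:composition} vanish on $\La_{n+1}$ except through one edge — so the formula degenerates substantially. Evaluating $X$ on the $2$-line via \eqref{eq:PVAstructure} turns $X^{\bullet\to\bullet}$ into multiplication in $V$, and collecting the terms of the $\Box$-product over the relevant shuffles $S_{m+1,n}$ reproduces exactly the three groups of terms in the Hochschild differential \eqref{eq:hochdifferential}: the leftmost $a_1f(\cdots)$, the internal $f(\cdots a_ia_{i+1}\cdots)$, and the rightmost $f(\cdots)a_{n+1}$, with signs $(-1)^i$ matching. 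The only subtlety is that the $\lambda$-bracket part of $X$ (the term $X^{\bullet\ \bullet}_{\lambda,-\lambda-\partial}$) contributes terms supported on graphs \emph{other} than $\La_{n+1}$ — either disconnected graphs or graphs with a different edge set — hence does not survive evaluation at $\La_{n+1}$; I would verify this by a support/degree count. So $\varphi\circ\ad X=d\circ\varphi$.

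For step (iii), injectivity on $\gr^{n-1}W^{n-1}_\cl(\Pi V)$: by \eqref{eq:number} an element of this space is a collection $(Y^\Ga)$ over connected acyclic $\Ga$ with $n-1$ edges (i.e.\ spanning trees on $n$ labeled vertices), subject to cycle relations and $S_n$-symmetry. I claim the cycle relations let one express every such $Y^\Ga$ in terms of the values on \emph{lines}, and the $S_n$-symmetry \eqref{eq:symmetry} then reduces everything to $Y^{\La_n}$ alone; this gives injectivity of $Y\mapsto Y^{\La_n}$ on the top degree. For surjectivity I would run this in reverse: given $F\in C^n_{\partial,\Har}(V)$, define $Y^{\sigma(\La_n)}:=\pm F\circ\sigma$ (with sign dictated by the reorientation count, so that the definition is consistent — this is where the Harrison condition \eqref{eq:harrisoncond} on $F$ is used, to make the assignment well defined across different $\sigma$ giving the same graph), extend by the cycle relations to all spanning trees, and set $Y^\Ga=0$ otherwise; one checks this $Y$ satisfies \eqref{eq:cycle1}--\eqref{eq:cycle2}, \eqref{eq:sesq1}--\eqref{eq:sesq2}, \eqref{eq:symmetry}, and $Y^{\La_n}=F$. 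Surjectivity of the full map then follows since $C^n_{\partial,\Har}(V)$ is hit already by the top graded piece.

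\textbf{Main obstacle.} The hard part will be step (i) — showing $Y^{\La_n}$ satisfies Harrison's conditions — and the consistency check in the surjectivity half of step (iii): in both cases one must match the combinatorics of monotone permutations and their drops \eqref{eq:drop}, \eqref{eq:A} against the sign changes forced by reorienting edges of a line under the $S_n$-action, and show that the cycle relations \eqref{eq:cycle2} for $2$-cycles are exactly what is needed to make $L_kY^{\La_n}=Y^{\La_n}$ hold identically. I expect Lemmas~\ref{lem:1} and \ref{lem:2} (the recursive structure of $\mc M_n^k$) to be the tool for organizing an induction on $n$ here, peeling off either the start or the endpoint of the line.
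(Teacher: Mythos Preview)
Your overall three-step plan matches the paper's architecture (its Lemmas~\ref{lem:symmetry} and~\ref{lem:differential}), and step (ii) is essentially right. But there is a genuine gap in step (i), which then propagates into the consistency check of step (iii).

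The error is your assertion that for $\pi\in\mc M_n^k$ the graph $\pi(\La_n)$ ``differs from $\La_n$ by the same underlying line with reoriented edges.'' This is false: already for $n=3$, $k=2$, the monotone permutations $[2,1,3]$ and $[2,3,1]$ give the lines $2\to1\to3$ and $2\to3\to1$, neither of which has the same unoriented edge set as $\La_3$. In general $\pi(\La_n)$ is a genuinely different spanning tree, and the $2$-cycle relation (edge reversal with a sign) cannot relate it to $\La_n$. What is actually needed is the length-$3$ cycle relation, the triangle identity of Example~\ref{ex:triangle}, applied repeatedly to establish the graph identity
\[
\La_n \equiv (-1)^{k-1}\sum_{\pi\in\mc M_n^k}\pi(\La_n)
\quad\text{in } \mb F\mc G(n)/R(n)\,.
\]
This is the paper's Lemma~\ref{lem:identity}; its proof first treats $k=2$ (Lemma~\ref{lem:connline}), using the triangle relation to slide the vertex $1$ step by step along the line $2\to3\to\cdots\to n$, and then iterates. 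Once this identity is available, your argument goes through: apply $Y=Y^{\pi^{-1}}$ and \eqref{eq:actclassicoperad}, and \eqref{eq:drop} converts $(-1)^{k-1}\sign(\pi)$ to $(-1)^{\dr(\pi)}$, giving $L_kY^{\La_n}=Y^{\La_n}$. The same identity is what makes your surjectivity construction in (iii) well defined (checking $S_n$-symmetry of the candidate $Y$ needs it, not just edge reversals), and injectivity rests on the fact that lines form a basis of $\mb F\mc G(n)/R(n)$ (Theorem~\ref{thm:lines}, imported from \cite{BDSHK19}), which you use implicitly. Lemmas~\ref{lem:1} and~\ref{lem:2} are not used in the paper's proof of the theorem.
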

We will prove Theorem \ref{thm:main} in Section \ref{ssec:proof}. 
For that, we will need some preliminary results. 

\subsection{Lines} \label{ssec:lines}

We say that a graph $\Ga\in \mc G(n)$ is a non-connected line if it has the following form:
\begin{equation}\label{eq:unionlines}
\begin{tikzpicture}
\node at (-0.2,1) {$\Gamma=$};
\draw[fill] (0.5,1) circle [radius=0.07];
\node at (0.5,0.6) {$i^1_1$};
\draw[->] (0.6,1) -- (0.9,1);
\draw[fill] (1,1) circle [radius=0.07];
\node at (1,0.6) {$i^1_2$};
\draw[->] (1.1,1) -- (1.4,1);
\node at (1.7,1) {$\cdots$};
\draw[->] (1.9,1) -- (2.2,1);
\draw[fill] (2.3,1) circle [radius=0.07];
\node at (2.3,0.6) {$i^1_{k_1}$};
\draw[fill] (3,1) circle [radius=0.07];
\node at (3,0.6) {$i^2_1$};
\draw[->] (3.1,1) -- (3.4,1);
\draw[fill] (3.5,1) circle [radius=0.07];
\node at (3.5,0.6) {$i^2_2$};
\draw[->] (3.6,1) -- (3.9,1);
\node at (4.2,1) {$\cdots$};
\draw[->] (4.4,1) -- (4.7,1);
\draw[fill] (4.8,1) circle [radius=0.07];
\node at (4.8,0.6) {$i^2_{k_2}$};
\node at (5.45,1) {$\cdots$};
\draw[fill] (6,1) circle [radius=0.07];
\node at (6,0.6) {$i^s_1$};
\draw[->] (6.1,1) -- (6.4,1);
\draw[fill] (6.5,1) circle [radius=0.07];
\node at (6.5,0.6) {$i^s_2$};
\draw[->] (6.6,1) -- (6.9,1);
\node at (7.2,1) {$\cdots$};
\draw[->] (7.4,1) -- (7.7,1);
\draw[fill] (7.8,1) circle [radius=0.07];
\node at (7.8,0.6) {$i^s_{k_s}$};
\node at (9.7,1) {$=L_1\sqcup L_2\sqcup\dots\sqcup L_s\,,$};
\end{tikzpicture}
\end{equation}
where $1\leq k_1 \leq \dots \leq k_s$ are such that $k_1+\dots+k_s=n$,
and the set of indices $\{i^a_b\}$ is a permutation of $\{1,\dots,n\}$ such that
\begin{equation}
i^l_1=\min\{i_1^l,\dots,i_{k_l}^l\} \;\;\;\forall\;l=1,\dots,s\,.
\end{equation}
If $k_l=k_{l+1}$, we also assume that $i_1^l<i_1^{l+1}$. In particular, the connected lines are all of the form
\begin{equation}\label{eq:lines}
\sigma (\La_n)\,, \;\;\; \sigma\in S_n\,\text{ such that } \sigma(1)=1\,,
\end{equation}
where $\La_n$ is the $n$-line \eqref{eq:Ln}. Let $\mathcal{L}(n)$ be the set of $n$-graphs 
that are non-connected lines. Let also $\mb F\mc G (n)$ be the vector space with basis 
the set of graphs $\mc{G}(n)$. 
\begin{definition}
The \emph{cycle relations} in $\mb{F}\mc G (n)$ are the following elements:
\begin{enumerate}[(i)]
\item
all $\Ga\in \mc G(n)\setminus \mc{G}_{0}(n)$ (i.e., graphs containing a cycle); \label{eq:cyclerel1}
\item
all linear combinations $\sum_{e\in C} \Ga\setminus e$, where $\Ga\in \mc{G}(n)$ 
and $C\subset E(\Ga)$ is an oriented cycle. \label{eq:cyclerel2}
\end{enumerate}
\end{definition}
Denote by $R(n)\subset \mb{F}\mc{G}(n)$ the subspace spanned by the cycle relations \eqref{eq:cyclerel1} 
and \eqref{eq:cyclerel2}. 

Note that reversing an arrow in a graph $\Ga\in \mc{G}(n)$ gives us, modulo cycle relations, 
the element $-\Ga\in \mb{F}\mc{G}(n)$.
\begin{example}\label{ex:triangle}
For $n=3$, a cycle relation of type \eqref{eq:cyclerel2} is:
\begin{equation}\label{eq:triangle}
\begin{tikzpicture}
\node at (0,0) {$2$};
\node at (1.4,0) {$3$};
\node at (0.7,1) {$1$};
\draw[->] (0.6,0.9) to (0.1,0.1);
\draw[->] (0.1,0) to (1.3,0);
\end{tikzpicture}
\genfrac{}{}{0pt}{}{\;\;+\;\;}{}
\begin{tikzpicture}
\node at (0,0) {$2$};
\node at (1.4,0) {$3$};
\node at (0.7,1) {$1$};
\draw[->] (0.1,0) to (1.3,0);
\draw[<-] (0.8,0.9) to (1.3,0.1);
\end{tikzpicture}
\genfrac{}{}{0pt}{}{\;\;+\;\;}{}
\begin{tikzpicture}
\node at (0,0) {$2$};
\node at (1.4,0) {$3$};
\node at (0.7,1) {$1$};
\draw[<-] (0.8,0.9) to (1.3,0.1);
\draw[->] (0.6,0.9) to (0.1,0.1);
\end{tikzpicture}
\end{equation}
\end{example}
\begin{remark}\label{rem:cycle}
The cycle relations \eqref{eq:cycle1} and \eqref{eq:cycle2} on $Y\in \mc{P}_{\cl}$
can be restated by saying that $Y^{\Ga}=0$ for all $\Ga\in R(n)$.
\end{remark}
\begin{theorem}[{\cite[Theorem\ 4.7]{BDSHK19}}] \label{thm:lines}
The set\/ $\mathcal{L}(n)$ is a basis for the quotient space\/ $\mb{F}\mc{G}(n)/R(n)$.
\end{theorem}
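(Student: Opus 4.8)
The plan is to prove Theorem~\ref{thm:lines} by exhibiting the set $\mathcal{L}(n)$ of non-connected lines both as a \emph{spanning set} and as a \emph{linearly independent set} in the quotient $\mb{F}\mc{G}(n)/R(n)$; these two halves are of rather different character, and I expect the independence half to be the real work. The count of $\mathcal{L}(n)$ should, in the end, match the dimension of the quotient, so the two halves reinforce each other.

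\textbf{Spanning.} First I would show that every graph $\Gamma\in\mc G(n)$ is congruent, modulo $R(n)$, to a linear combination of non-connected lines. The key observations are: (i) by relation~\eqref{eq:cyclerel1} any $\Gamma$ with a cycle is already $0$, so we may assume $\Gamma\in\mc G_0(n)$ is acyclic; (ii) by the length-$2$ cycle relations in \eqref{eq:cyclerel2}, reversing any edge changes $\Gamma$ to $-\Gamma$ in the quotient, so the orientation of edges is immaterial up to sign, and we may work with underlying undirected forests; (iii) an undirected forest which is not a disjoint union of paths has some vertex of degree $\geq 3$; applying a longer cycle relation from \eqref{eq:cyclerel2} to an auxiliary cycle created by (temporarily adding and then) manipulating edges at that vertex lets us rewrite $\Gamma$ as a $\pm$-combination of forests with strictly fewer ``branching'' defect, e.g.\ fewer vertices of degree $\geq 3$, or a strictly smaller value of $\sum_v \binom{\deg v}{2}$. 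An induction on such a complexity measure then reduces everything to disjoint unions of paths, i.e.\ to (orientations of) non-connected lines, and step (ii) together with the normalization conventions $i^l_1=\min$ and the ordering $i_1^l<i_1^{l+1}$ when $k_l=k_{l+1}$ pins down a specific element of $\mathcal{L}(n)$ up to sign. Concretely, the cleanest version of (iii) is probably: at a vertex $v$ of degree $\geq 3$ with incident edges to $a,b,c,\dots$, the triangle relation of Example~\ref{ex:triangle} (transported along the forest) expresses the graph with edges $\{v a,\,v b\}$ in terms of graphs with edges $\{v a,\, a b\}$ and $\{a b,\, v b\}$, each of which has ``moved'' a branch away from $v$; iterating splits every branch vertex.

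\textbf{Linear independence.} Here I would construct, for each $L\in\mathcal{L}(n)$, a linear functional on $\mb{F}\mc{G}(n)$ vanishing on $R(n)$ and forming a dual family to $\mathcal{L}(n)$ — equivalently, an explicit complement to $R(n)$ detecting the $\mathcal{L}(n)$-coordinates. The natural candidate is a functional built from evaluating $Y^{\Gamma}$ for a cleverly chosen family of cochains $Y\in\mc P_{\cl}(n)$ of some PVA, or more combinatorially, a state-sum over ``breakings'' of $\Gamma$ into a forest of paths. One workable approach: orient $\mb Z_{\geq 0}^n$-degree data on vertices and assign to a graph the signed sum over all ways of resolving each cycle via \eqref{eq:cyclerel2}; show this is well-defined on the quotient and is triangular with respect to a suitable partial order on $\mathcal{L}(n)$ (refining, say, the partition $k_1\leq\dots\leq k_s$ and then the tuples $i^a_b$), with $1$'s on the diagonal. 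Triangularity plus the spanning statement forces $\mathcal{L}(n)$ to be a basis. Because this is Theorem~4.7 of \cite{BDSHK19}, I would cite that paper for the detailed verification and only indicate the structure here.

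\textbf{Main obstacle.} The hard part is the linear independence: writing down a family of invariants that provably descends to $\mb{F}\mc{G}(n)/R(n)$ \emph{and} is non-degenerate against $\mathcal{L}(n)$ requires checking compatibility with \emph{all} cycle relations \eqref{eq:cyclerel2}, not just the triangle, and the bookkeeping of signs under edge-reversal (relation of length $2$) interacts subtly with the normalization conventions defining $\mathcal{L}(n)$. The spanning direction, by contrast, is a finite combinatorial reduction that I expect to go through cleanly by induction on the branching complexity as sketched above. Since the statement is quoted verbatim from \cite[Theorem~4.7]{BDSHK19}, the proof in the present paper will presumably just be a reference to that source, with perhaps a one-line reminder of the reduction-to-forests idea.
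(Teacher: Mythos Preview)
Your final paragraph is exactly right: the paper gives no proof of Theorem~\ref{thm:lines} at all. The statement is simply quoted from \cite[Theorem~4.7]{BDSHK19} and used as a black box; there is not even a one-line reminder of the argument. So there is nothing in the present paper to compare your sketch against.

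As for the sketch itself: the spanning argument you outline (reduce to acyclic graphs by \eqref{eq:cyclerel1}, ignore orientation up to sign by the length-$2$ case of \eqref{eq:cyclerel2}, then iteratively apply the triangle relation of Example~\ref{ex:triangle} at a vertex of degree $\geq 3$ to decrease a branching complexity such as $\sum_v\binom{\deg v}{2}$) is a sound strategy and is indeed how this direction goes. The independence direction you describe is only a plan, not a proof: ``construct a dual family of functionals that descends to $\mb{F}\mc{G}(n)/R(n)$ and is triangular against $\mathcal{L}(n)$'' is the right shape, but you have not actually exhibited such functionals or verified the cycle-relation compatibility, and you flag this yourself as the main obstacle. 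If you were writing this up independently of \cite{BDSHK19}, that is precisely the part you would need to fill in; in the context of the present paper, a bare citation is all that is expected.
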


By Theorem \ref{thm:lines} and \eqref{eq:lines}, we can write every connected graph $\Ga\in \mc G(n)$, 
uniquely, modulo cycle relation, as follows:
\begin{equation} \label{eq:lincomb}
\Ga \equiv \sum_{\substack{\sigma\in S_n\\ \sigma(1)=1}} c_{\sigma}^{\Ga} \,\sigma \La_n\,,
\end{equation}
where $c_{\sigma}^{\Ga}\in \mb{F}$ and the action of the symmetric group on graphs 
is defined in Section \ref{ssec:ngraphs}.
Here and further, by $\equiv$ we mean equivalence modulo cycle relations, 
i.e., equality in the quotient space $\mb{F}\mc{G}(n)/R(n)$.

\subsection{Connected lines} \label{ssec:connlines}

\begin{lemma} \label{lem:connline}
For every $n$, the following identity on $n$-lines holds:
\begin{equation}\label{eq:connline}
\begin{tikzpicture}[scale=0.8]
\draw[fill] (0,0) circle [radius=0.04];
\node at (0,-0.2) {\tiny{$1$}};
\draw[fill] (0.5,0) circle [radius=0.04];
\node at (0.5,-0.2) {\tiny{$2$}};
\node at (1,0) {.};
\node at (1.1,0) {.};
\node at (1.2,0) {.};
\draw[fill] (1.7,0) circle [radius=0.04];
\node at (1.7,-0.2) {\tiny{$n$}};
\draw[->] (0.1,0) to (0.4,0);
\draw[->] (0.6,0) to (0.9,0);	
\draw[->] (1.3,0) to (1.6,0);
\end{tikzpicture}
\genfrac{}{}{0pt}{}{+}{}
\begin{tikzpicture}
\draw[fill] (0,0) circle [radius=0.04];
\node at (0,-0.2) {\tiny{$2$}};
\draw[fill] (0.5,0) circle [radius=0.04];
\node at (0.5,-0.2) {\tiny{$1$}};
\draw[fill] (1,0) circle [radius=0.04];
\node at (1,-0.2) {\tiny{$3$}};
\node at (1.5,0) {.};
\node at (1.6,0) {.};
\node at (1.7,0) {.};
\draw[fill] (2.2,0) circle [radius=0.04];
\node at (2.2,-0.2) {\tiny{$n$}};
\draw[->] (0.1,0) to (0.4,0);
\draw[->] (0.6,0) to (0.9,0);	
\draw[->] (1.1,0) to (1.4,0);
\draw[->] (1.8,0) to (2.1,0);
\end{tikzpicture}
\genfrac{}{}{0pt}{}{+}{}
\genfrac{}{}{0pt}{}{\cdots}{}
\genfrac{}{}{0pt}{}{+}{}
\begin{tikzpicture}
\draw[fill] (0,0) circle [radius=0.04];
\node at (0,-0.2) {\tiny{$2$}};
\draw[fill] (0.5,0) circle [radius=0.04];
\node at (0.5,-0.2) {\tiny{$3$}};
\node at (1,0) {.};
\node at (1.1,0) {.};
\node at (1.2,0) {.};
\draw[fill] (1.7,0) circle [radius=0.04];
\node at (1.7,-0.2) {\tiny{$n\!-\!1$}};
\draw[fill] (2.2,0) circle [radius=0.04];
\node at (2.2,-0.2) {\tiny{$1$}};
\draw[fill] (2.7,0) circle [radius=0.04];
\node at (2.7,-0.2) {\tiny{$n$}};
\draw[->] (0.1,0) to (0.4,0);
\draw[->] (0.6,0) to (0.9,0);	
\draw[->] (1.3,0) to (1.6,0);
\draw[->] (1.8,0) to (2.1,0);
\draw[->] (2.3,0) to (2.6,0);
\end{tikzpicture}
\genfrac{}{}{0pt}{}{+}{}
\begin{tikzpicture}
\draw[fill] (0,0) circle [radius=0.04];
\node at (0,-0.2) {\tiny{$2$}};
\draw[fill] (0.5,0) circle [radius=0.04];
\node at (0.5,-0.2) {\tiny{$3$}};
\node at (1,0) {.};
\node at (1.1,0) {.};
\node at (1.2,0) {.};
\draw[fill] (1.7,0) circle [radius=0.04];
\node at (1.7,-0.2) {\tiny{$n$}};
\draw[fill] (2.2,0) circle [radius=0.04];
\node at (2.2,-0.2) {\tiny{$1$}};
\draw[->] (0.1,0) to (0.4,0);
\draw[->] (0.6,0) to (0.9,0);	
\draw[->] (1.3,0) to (1.6,0);
\draw[->] (1.8,0) to (2.1,0);
\end{tikzpicture}
\genfrac{}{}{0pt}{}{\equiv}{}
\genfrac{}{}{0pt}{}{0\,.}{}
\end{equation}
\end{lemma}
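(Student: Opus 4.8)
The identity \eqref{eq:connline} is a sum, over $j=1,\dots,n-1$, of the $n$-lines obtained from the standard line $\La_n$ by "extracting" vertex $1$ from the front and re-inserting it between positions $j$ and $j+1$ of the remaining line on $\{2,3,\dots,n\}$, all taken modulo the cycle relations $R(n)$. The natural strategy is to recognize the left-hand side as a telescoping sum of cycle relations of type \eqref{eq:cyclerel2}. Concretely, for each $j=2,\dots,n$, consider the acyclic graph $\Ga_j$ on $\{1,\dots,n\}$ consisting of the path $2\to 3\to\dots\to n$ together with the single extra edge $1\to j$ (so $\Ga_j$ has a vertex of in-degree $2$ at $j$, hence contains, after forgetting orientations, the unoriented cycle $1\to j\leftarrow(j-1)$ when $j\ge 3$, and similarly a $2$-cycle when $j=2$ is handled by the arrow-reversal remark). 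Applying the cycle relation \eqref{eq:cycle2}/\eqref{eq:cyclerel2} to the (unique, length-$2$) oriented cycle sitting on the two incoming edges at $j$ — after reversing one arrow so the two edges point head-to-head — produces, modulo $R(n)$, a relation of the form $(\text{line with }1\text{ just before }j) \equiv (\text{line with }1\text{ just before }j-1) + (\text{correction})$, and the corrections telescope.

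**Key steps, in order.** First I would set up notation: write $L^{(0)}$ for $\La_n$ itself (the first term, the standard line $1\to 2\to\dots\to n$), and for $j=1,\dots,n-1$ write $L^{(j)}$ for the $j$-th term of the left-hand side of \eqref{eq:connline}, i.e.\ the line $2\to 3\to\dots\to (j{+}1)\to 1\to (j{+}2)\to\dots\to n$ (so $L^{(n-1)}$ is the last term, with $1$ at the very end). Second, for each $j=1,\dots,n-1$ I would exhibit an explicit graph $G_j\in\mc G(n)$, acyclic as an oriented graph but containing an unoriented $2$-cycle, whose cycle relation \eqref{eq:cyclerel2} reads
\begin{equation*}
L^{(j-1)} + L^{(j)} \;\equiv\; 0 \pmod{R(n)}\,,
\end{equation*}
after using the arrow-reversal observation (reversing one arrow changes the sign mod $R(n)$). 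The graph $G_j$ is: take the line $2\to\dots\to n$ on $\{2,\dots,n\}$, and attach vertex $1$ by two edges, one making it sit "before position $j$" and one "before position $j-1$", oriented so that the relevant two edges form a length-$2$ oriented cycle after one reversal; summing over the two edges of that cycle gives exactly $L^{(j-1)}+L^{(j)}$. Third, I would sum these $n-1$ relations over $j=1,\dots,n-1$. Because the sum telescopes in the pattern $(L^{(0)}+L^{(1)})+(L^{(1)}+L^{(2)})+\dots$, one must instead take the alternating combination — but the signs are already built into the arrow-reversal step, so what actually comes out is $\sum_{j=0}^{n-1}(\pm)L^{(j)}\equiv 0$ with all signs $+$ once orientations are fixed consistently; carefully tracking the single sign change at each reversal confirms that the total is precisely the left-hand side of \eqref{eq:connline}.

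**Alternative, cleaner route.** Rather than telescoping ad hoc, it may be cleaner to prove \eqref{eq:connline} by induction on $n$ using the cocomposition / insertion structure: the sum on the left of \eqref{eq:connline} is exactly what one obtains by inserting the standard $2$-line relation (for $n=2$ the statement is $1\to 2 \;+\; 2\to 1\equiv 0$, which is the arrow-reversal remark applied to the $2$-cycle) into the $(n-1)$-line on $\{2,\dots,n\}$ at every internal vertex, and then invoking Theorem \ref{thm:lines} to control the span of lines modulo $R(n)$. In this approach the base case $n=2$ is immediate, and the inductive step amounts to observing that "detaching vertex $1$ and sliding it along the line $2\to\dots\to n$" is governed entirely by length-$2$ cycle relations at each edge it crosses.

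**Main obstacle.** The genuine difficulty is bookkeeping of signs: each time vertex $1$ is moved past an edge of the spine one invokes a cycle relation and a single arrow-reversal, each contributing a sign modulo $R(n)$, and one must check that these signs conspire so that the final relation is the \emph{sum} (all coefficients $+1$) appearing in \eqref{eq:connline}, rather than some alternating sum. A secondary subtlety is making sure that every intermediate graph $G_j$ is genuinely in $\mc G(n)$ (no tadpoles) and that its relevant cycle really has length $2$, so that \eqref{eq:cyclerel2} reduces to the simple arrow-reversal form; the degenerate cases $j=1$ (vertex $1$ at the front) and $j=n-1$ (vertex $1$ at the end) should be checked by hand to confirm they are consistent with the uniform argument. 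Once the sign accounting is pinned down, the proof is a short formal computation, and it can be relegated to \cite{Vig19} for the fully explicit verification if desired.
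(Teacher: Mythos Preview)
Your approach has a genuine gap. The central claim in your ``Key steps'' plan is that for each $j$ there is a graph $G_j$ whose cycle relation of type~\eqref{eq:cyclerel2} yields $L^{(j-1)}+L^{(j)}\equiv 0$ via a length-$2$ oriented cycle. But a length-$2$ cycle relation involves a graph containing two antiparallel edges $a\to b$ and $b\to a$, and removing each of them produces two graphs that differ only in the orientation of that single edge; in other words, length-$2$ cycle relations encode \emph{exactly} the arrow-reversal sign rule and nothing more. Since $L^{(j-1)}$ and $L^{(j)}$ have different underlying unoriented edge sets (for $n\ge 3$), no sequence of arrow reversals can turn one into the other, so no length-$2$ cycle relation can produce $L^{(j-1)}+L^{(j)}$. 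Concretely, for $n=3$ one computes (using a triangle relation and arrow reversals) that
\[
L^{(0)}+L^{(1)} \;\equiv\; -\,(1\to 3\to 2)\;\not\equiv\;0
\]
in $\mb F\mc G(3)/R(3)$, since $1\to 3\to 2$ is one of the basis lines of Theorem~\ref{thm:lines}. So the relation you are trying to establish at each step is simply false, and the telescoping/alternating-sum argument built on it cannot be repaired by sign bookkeeping alone.

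The paper's proof does something genuinely different: it uses the length-$3$ (triangle) cycle relation of Example~\ref{ex:triangle} at each step. Combining $L^{(0)}+L^{(1)}$ with a triangle relation on the vertices $\{1,2,3\}$ (with the tail $3\to 4\to\cdots\to n$ attached) produces a ``comb'' graph with vertex~$1$ dangling off vertex~$3$ of the spine; adding $L^{(2)}$ and applying a triangle relation on $\{1,3,4\}$ slides the dangling vertex to position~$4$; and so on, until the last step where the triangle relation closes up to zero. The key missing ingredient in your plan is precisely this use of $3$-cycles rather than $2$-cycles. Your ``alternative route'' via induction and insertion of the $n=2$ identity is too vague as stated and suffers from the same issue: the $n=2$ relation is again just arrow reversal, and inserting it locally does not generate the new edges needed to move vertex~$1$ along the spine.
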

\begin{proof}
Let us consider the first two terms in the left-hand side of \eqref{eq:connline}. 
Reversing the edges $2\rightarrow 1$ and $1\rightarrow 3$ in the second graph, 
and using \eqref{eq:triangle}, we have:
\begin{equation} \label{eq:p1}
\begin{tikzpicture}
\node at (0,0) {\tiny{$2$}};
\node at (0.25,0.5) {\tiny{$1$}};
\node at (0.5,0) {\tiny{$3$}};
\node at (1,0) {.};
\node at (1.1,0) {.};
\node at (1.2,0) {.};
\node at (1.7,0) {\tiny{$n$}};
\draw[->] (0.2,0.45) to (0,0.1);
\draw[->] (0.1,0) to (0.4,0);
\draw[->] (0.6,0) to (0.9,0);	
\draw[->] (1.3,0) to (1.6,0);
\end{tikzpicture}
\genfrac{}{}{0pt}{}{+}{}
\begin{tikzpicture}
\node at (0,0) {\tiny{$2$}};
\node at (0.25,0.5) {\tiny{$1$}};
\node at (0.5,0) {\tiny{$3$}};
\node at (1,0) {\tiny{$4$}};
\node at (1.5,0) {.};
\node at (1.6,0) {.};
\node at (1.7,0) {.};
\node at (2.2,0) {\tiny{$n$}};
\draw[<-] (0.3,0.45) to (0.5,0.1);
\draw[->] (0.2,0.45) to (0,0.1);
\draw[->] (0.6,0) to (0.9,0);	
\draw[->] (1.1,0) to (1.4,0);
\draw[->] (1.8,0) to (2.1,0);
\end{tikzpicture}
\genfrac{}{}{0pt}{}{\equiv}{}
\genfrac{}{}{0pt}{}{-}{}
\begin{tikzpicture}
\node at (0,0) {\tiny{$2$}};
\node at (0.25,0.5) {\tiny{$1$}};
\node at (0.5,0) {\tiny{$3$}};
\node at (1,0) {.};
\node at (1.1,0) {.};
\node at (1.2,0) {.};
\node at (1.7,0) {\tiny{$n$}};
\draw[<-] (0.3,0.45) to (0.5,0.1);
\draw[->] (0.1,0) to (0.4,0);
\draw[->] (0.6,0) to (0.9,0);	
\draw[->] (1.3,0) to (1.6,0);
\end{tikzpicture}
\genfrac{}{}{0pt}{}{\equiv}{}
\begin{tikzpicture}
\node at (0,0) {\tiny{$2$}};
\node at (0.75,0.5) {\tiny{$1$}};
\node at (0.5,0) {\tiny{$3$}};
\node at (1,0) {\tiny{$4$}};
\node at (1.5,0) {.};
\node at (1.6,0) {.};
\node at (1.7,0) {.};
\node at (2.2,0) {\tiny{$n$}};
\draw[->] (0.7,0.45) to (0.5,0.1);
\draw[->] (0.1,0) to (0.4,0);
\draw[->] (0.6,0) to (0.9,0);
\draw[->] (1.1,0) to (1.4,0);
\draw[->] (1.8,0) to (2.1,0);
\end{tikzpicture}
\end{equation}
Adding \eqref{eq:p1} to the third term appearing in the left-hand side of \eqref{eq:connline}, and applying again \eqref{eq:triangle}, we obtain:
\begin{equation} \label{eq:p2}
\begin{tikzpicture}
\node at (-0.5,0) {\tiny{$2$}};
\node at (0,0) {\tiny{$3$}};
\node at (0.25,0.5) {\tiny{$1$}};
\node at (0.5,0) {\tiny{$4$}};
\node at (1,0) {.};
\node at (1.1,0) {.};
\node at (1.2,0) {.};
\node at (1.7,0) {\tiny{$n$}};
\draw[->] (-0.4,0) to (-0.1,0);
\draw[->] (0.2,0.45) to (0,0.1);
\draw[->] (0.1,0) to (0.4,0);
\draw[->] (0.6,0) to (0.9,0);	
\draw[->] (1.3,0) to (1.6,0);
\end{tikzpicture}
\genfrac{}{}{0pt}{}{+}{}
\begin{tikzpicture}
\node at (-0.5,0) {\tiny{$2$}};
\node at (0,0) {\tiny{$3$}};
\node at (0.25,0.5) {\tiny{$1$}};
\node at (0.5,0) {\tiny{$4$}};
\node at (1,0) {.};
\node at (1.1,0) {.};
\node at (1.2,0) {.};
\node at (1.7,0) {\tiny{$n$}};
\draw[->] (-0.4,0) to (-0.1,0);
\draw[->] (0.2,0.45) to (0,0.1);
\draw[<-] (0.3,0.45) to (0.5,0.1);
\draw[->] (0.6,0) to (0.9,0);	
\draw[->] (1.3,0) to (1.6,0);
\end{tikzpicture}
\genfrac{}{}{0pt}{}{\equiv}{}
\genfrac{}{}{0pt}{}{-}{}
\begin{tikzpicture}
\node at (-0.5,0) {\tiny{$2$}};
\node at (0,0) {\tiny{$3$}};
\node at (0.25,0.5) {\tiny{$1$}};
\node at (0.5,0) {\tiny{$4$}};
\node at (1,0) {.};
\node at (1.1,0) {.};
\node at (1.2,0) {.};
\node at (1.7,0) {\tiny{$n$}};
\draw[->] (-0.4,0) to (-0.1,0);
\draw[<-] (0.3,0.45) to (0.5,0.1);
\draw[->] (0.1,0) to (0.4,0);
\draw[->] (0.6,0) to (0.9,0);	
\draw[->] (1.3,0) to (1.6,0);
\end{tikzpicture}
\genfrac{}{}{0pt}{}{\equiv}{}
\begin{tikzpicture}
\node at (-0.5,0) {\tiny{$2$}};
\node at (0,0) {\tiny{$3$}};
\node at (0.75,0.5) {\tiny{$1$}};
\node at (0.5,0) {\tiny{$4$}};
\node at (1,0) {.};
\node at (1.1,0) {.};
\node at (1.2,0) {.};
\node at (1.7,0) {\tiny{$n$}};
\draw[->] (-0.4,0) to (-0.1,0);
\draw[->] (0.7,0.45) to (0.5,0.1);
\draw[->] (0.1,0) to (0.4,0);
\draw[->] (0.6,0) to (0.9,0);	
\draw[->] (1.3,0) to (1.6,0);
\end{tikzpicture}
\end{equation}
We proceed in the same way, up to 
\begin{equation}
\begin{tikzpicture}
\node at (0,0) {\tiny{$2$}};
\node at (0.5,0) {\tiny{$3$}};
\node at (1,0) {.};
\node at (1.1,0) {.};
\node at (1.2,0) {.};
\node at (1.9,0) {\tiny{$(\!n\!-\!1\!)$}};
\node at (2.6,0) {\tiny{$n$}};
\node at (2.25,0.5) {\tiny{$1$}};
\draw[->] (0.1,0) to (0.4,0);
\draw[->] (0.6,0) to (0.9,0);	
\draw[->] (1.3,0) to (1.6,0);
\draw[->] (2.2,0) to (2.5,0);
\draw[->] (2.2,0.4) to (1.9,0.1);
\end{tikzpicture}
\genfrac{}{}{0pt}{}{+}{}
\begin{tikzpicture}
\node at (0,0) {\tiny{$2$}};
\node at (0.5,0) {\tiny{$3$}};
\node at (1,0) {.};
\node at (1.1,0) {.};
\node at (1.2,0) {.};
\node at (1.9,0) {\tiny{$(\!n\!-\!1\!)$}};
\node at (2.6,0) {\tiny{$n$}};
\node at (2.25,0.5) {\tiny{$1$}};
\draw[->] (0.1,0) to (0.4,0);
\draw[->] (0.6,0) to (0.9,0);	
\draw[->] (1.3,0) to (1.6,0);
\draw[->] (2.2,0.4) to (1.9,0.1);
\draw[<-] (2.3,0.4) to (2.6,0.1);
\end{tikzpicture}
\genfrac{}{}{0pt}{}{+}{}
\begin{tikzpicture}
\node at (0,0) {\tiny{$2$}};
\node at (0.5,0) {\tiny{$3$}};
\node at (1,0) {.};
\node at (1.1,0) {.};
\node at (1.2,0) {.};
\node at (1.9,0) {\tiny{$(\!n\!-\!1\!)$}};
\node at (2.6,0) {\tiny{$n$}};
\node at (2.25,0.5) {\tiny{$1$}};
\draw[->] (0.1,0) to (0.4,0);
\draw[->] (0.6,0) to (0.9,0);	
\draw[->] (1.3,0) to (1.6,0);
\draw[->] (2.2,0) to (2.5,0);
\draw[<-] (2.3,0.4) to (2.6,0.1);
\end{tikzpicture}
\genfrac{}{}{0pt}{}{\equiv}{}
\genfrac{}{}{0pt}{}{0 \,.}{}
\end{equation}
This completes the proof of the lemma.
\end{proof}
\begin{remark}\label{rem:local}
Observe that equation \eqref{eq:connline} can be viewed as a ``local'' identity: Lemma \ref{lem:connline} 
holds even if we attach the same graph $\Ga$ at any vertex of every graph appearing in the identity. 
\end{remark}
\begin{lemma} \label{lem:identity}
Let\/ $\La_n$ be as in \eqref{eq:Ln}. For every $k\in \{2,\dots,n\}$, the following identity holds:
\begin{equation} \label{eq:identity}
\La_n + (-1)^{k} \sum_{\pi\in \mc M_n^k} \pi \La_n \equiv 0\,,
\end{equation} 
where the sum is over all monotone permutations $\pi$ starting at $k$ and the action of $S_n$
on graphs is described in Section \ref{ssec:ngraphs}.
\end{lemma}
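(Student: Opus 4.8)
The plan is to prove \eqref{eq:identity} by induction on $k$, the base case being $k=2$. There \eqref{eq:identity} is just a restatement of Lemma \ref{lem:connline}: by the bijection \eqref{eq:correspondence} a permutation $\pi\in\mc M_n^2$ is determined by the position $d\in\{2,\dots,n\}$ of its unique drop, and then $\pi\La_n$ is the $n$-line $2\to3\to\cdots\to d\to1\to(d+1)\to\cdots\to n$, i.e.\ the line obtained from $2\to3\to\cdots\to n$ by inserting the vertex $1$ in position $d$; together with the term $\La_n$ (insertion in position $1$) these are exactly the $n$ graphs in the left-hand side of \eqref{eq:connline}, and $(-1)^2=1$.

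For the inductive step, fix $k$ with $3\le k\le n$, assume \eqref{eq:identity} for $k-1$, and take the difference of the two instances of \eqref{eq:identity}; using $-(-1)^{k-1}=(-1)^k$, it suffices to prove
\[
\sum_{\pi\in\mc M_n^{k}}\pi\La_n \;\equiv\; -\sum_{\pi'\in\mc M_n^{k-1}}\pi'\La_n \,.
\]
The key observation is that applying a permutation $\pi\in\mc M_n^k$ (with $\pi(1)=k$) to the identity \eqref{eq:connline} — which is permissible because $R(n)$ is $S_n$-invariant, so $\mb F\mc G(n)/R(n)$ is an $S_n$-module — rewrites $\pi\La_n$ as minus the sum, over positions $j=2,\dots,n$, of the $n$-line obtained by deleting the first vertex $k$ of $\pi\La_n$ and re-inserting it in position $j$; this is the local form of Lemma \ref{lem:connline} (Remark \ref{rem:local}), ``moving the start vertex to the right''. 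Summing over $\pi\in\mc M_n^k$, I would then regroup the right-hand side: those re-insertions that produce monotone lines contribute $-\sum_{\pi'\in\mc M_n^{k-1}}\pi'\La_n$ — here one checks, with the help of Remark \ref{rem:monotone} and Lemma \ref{lem:1}, that re-inserting $k$ in the unique monotone-preserving position turns a ``start at $k$'' permutation into a ``start at $k-1$'' one — while the remaining, non-monotone re-insertions must be shown to contribute zero, which I would do by reducing them to monotone lines via repeated use of the cycle relations defining $R(n)$ (edge reversals and the triangle relation \eqref{eq:triangle}) and exhibiting the resulting cancellations.

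The individual manipulations are routine; the main obstacle is the combinatorial bookkeeping of the inductive step. One must track, under the operation ``delete and re-insert the start vertex'', how the families of monotone permutations split according to Remark \ref{rem:monotone} and are matched by the bijections of Lemma \ref{lem:1}; verify that the non-monotone leftover lines (in particular all those coming from $\pi\in\mc M_n^{k,k+1}$) cancel after the $R(n)$-reductions; and keep control of the signs produced by the edge reversals so that, combined with $(-1)^{\dr(\pi)}$ as in \eqref{eq:drop}, closing the induction leaves exactly the coefficient $(-1)^k$ in \eqref{eq:identity}. A useful consistency check is to run the same argument ``from the other end'' of the lines, deleting and re-inserting the last vertex and using Remark \ref{rem:monotone2} and Lemma \ref{lem:2} instead.
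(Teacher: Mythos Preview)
Your base case $k=2$ is correct, and your inductive target $\sum_{\pi\in\mc M_n^{k}}\pi\La_n \equiv -\sum_{\pi'\in\mc M_n^{k-1}}\pi'\La_n$ is indeed equivalent to the step you need. But the mechanism you propose for this step does not work as described, and the part you defer (``exhibiting the resulting cancellations'') is essentially the whole proof.

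Concretely, your claim that re-inserting $k$ in ``the unique monotone-preserving position'' sets up a bijection $\mc M_n^k\to\mc M_n^{k-1}$ is false. Take $n=4$, $k=3$. For $\pi=[3,2,1,4]$ the re-insertions of $3$ into $2\to1\to4$ give $[2,3,1,4]$, $[2,1,3,4]$, $[2,1,4,3]$, the first \emph{two} of which lie in $\mc M_4^2$; for $\pi=[3,4,2,1]$ the re-insertions give $[4,3,2,1]$, $[4,2,3,1]$, $[4,2,1,3]$, \emph{none} of which even start at $2$. So the monotone contributions are not produced one-per-$\pi$, and the role of Remark~\ref{rem:monotone} / Lemma~\ref{lem:1} here is unclear: those results concern $\pi(2)$, not where $k$ should be re-inserted. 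What actually has to be shown is that the full double sum $\sum_{\pi\in\mc M_n^k}\sum_{j=2}^n G_{\pi,j}$ (which has $\binom{n-1}{k-1}(n-1)$ terms) reduces modulo $R(n)$ to the $\binom{n-1}{k-2}$ lines in $\mc M_n^{k-1}$; you have not indicated how the necessary massive cancellation is organized, and in the example above it amounts to checking that six specific $4$-lines sum to zero in $\mb F\mc G(4)/R(4)$, which is already nontrivial.

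The paper avoids this entirely by \emph{not} inducting on $k$. It parametrizes $\pi\in\mc M_n^k$ by its drop positions $d_1>d_2>\dots>d_{k-1}$ (with $\pi^{-1}(i)=d_i$), writes $\sum_{\pi}\pi\La_n$ as an iterated sum over $d_1,\dots,d_{k-1}$, and then performs the inner sum over $d_1\in\{d_2+1,\dots,n\}$ using the local form of Lemma~\ref{lem:connline} (Remark~\ref{rem:local}): this collapses the sum over $d_1$ to a single graph with the vertex $1$ attached as a branch at the vertex $2$, at the cost of a sign. Repeating for $d_2,\dots,d_{k-1}$ gives $(-1)^{k-1}\La_n$. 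The point is that summing over the position of the \emph{smallest} label, not the start vertex $k$, is what makes each application of Lemma~\ref{lem:connline} clean: at every stage the moving vertex ranges over a full tail of positions, so the local identity applies with no leftover terms to cancel.
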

\begin{proof}
If $k=2$, then \eqref{eq:identity} is equivalent to \eqref{eq:connline}.
Suppose that $k>2$. Recall the description \eqref{eq:correspondence} of the monotone permutations 
$\pi\in \mc M_n^k$ in terms of the set $D(\pi)$ of drops. Given the set of drops 
$D=\{d_1,\dots,d_{k-1}\}$ such that
$2\leq d_{k-1}<\dots <d_1\leq n$, the corresponding monotone permutation $\pi^D\in \mc M_n^k$ 
is uniquely determined by $\pi^{-1}(i)=d_i$, $\forall i=1\dots, k-1$. Hence:
\begin{equation*}
\begin{tikzpicture}
\node at (-1.2,0) {$\pi^D (\La_n) =$};
\node at (0,0) {\tiny{$k$}};
\node at (0.7,0) {\tiny{$(\!k\!+\!1\!)$}};
\node at (1.4,0) {.};
\node at (1.5,0) {.};
\node at (1.6,0) {.};
\node at (2.3,0) {\tiny{$\underline{(\!k\!-\!1\!)}$}};
\node at (2.3,-0.5) {\tiny{$[d_{k-1}]$}};
\node at (3,0) {.};
\node at (3.1,0) {.};
\node at (3.2,0) {.};
\node at (3.7,0) {\tiny{$\underline{2}$}};
\node at (3.7,-0.5) {\tiny{$[d_{2}]$}};
\node at (4.2,0) {.};
\node at (4.3,0) {.};
\node at (4.4,0) {.};
\node at (4.9,0) {\tiny{$\underline{1}$}};
\node at (4.9,-0.5) {\tiny{$[d_{1}]$}};
\node at (5.4,0) {.};
\node at (5.5,0) {.};
\node at (5.6,0) {.};
\node at (6.1,0) {\tiny{$n$}};
\draw[->] (0.1,0) to (0.4,0);
\draw[->] (1,0) to (1.3,0);
\draw[->] (1.7,0) to (2,0);
\draw[->] (2.6,0) to (2.9,0);
\draw[->] (3.3,0) to (3.6,0);
\draw[->] (3.8,0) to (4.1,0);
\draw[->] (4.5,0) to (4.8,0);
\draw[->] (5,0) to (5.3,0);
\draw[->] (5.7,0) to (6,0);
\end{tikzpicture}
\end{equation*}
where the underlined positions correspond to drops, while all other positions have vertices in increasing order from $k+1$ to $n$. 

We then have:
\begin{equation*} 
\begin{tikzpicture}
\node at (-2.5,-1.2) {$\displaystyle\sum\limits_{\pi\in \mc M_n^k} \pi \La_n = \displaystyle\sum\limits_{2\leq d_{k-1}<\dots <d_1\leq n}$};
\node at (0,-1) {\tiny{$k$}};
\node at (0.7,-1) {\tiny{$(\!k\!+\!1\!)$}};
\node at (1.4,-1) {.};
\node at (1.5,-1) {.};
\node at (1.6,-1) {.};
\node at (2.3,-1) {\tiny{$\underline{(\!k\!-\!1\!)}$}};
\node at (2.3,-1.5) {\tiny{$[d_{k-1}]$}};
\node at (3,-1) {.};
\node at (3.1,-1) {.};
\node at (3.2,-1) {.};
\node at (3.7,-1) {\tiny{$\underline{2}$}};
\node at (3.7,-1.5) {\tiny{$[d_{2}]$}};
\node at (4.2,-1) {.};
\node at (4.3,-1) {.};
\node at (4.4,-1) {.};
\node at (4.9,-1) {\tiny{$\underline{1}$}};
\node at (4.9,-1.5) {\tiny{$[d_{1}]$}};
\node at (5.4,-1) {.};
\node at (5.5,-1) {.};
\node at (5.6,-1) {.};
\node at (6.1,-1) {\tiny{$n$}};
\draw[->] (0.1,-1) to (0.4,-1);
\draw[->] (1,-1) to (1.3,-1);
\draw[->] (1.7,-1) to (2,-1);
\draw[->] (2.6,-1) to (2.9,-1);
\draw[->] (3.3,-1) to (3.6,-1);
\draw[->] (3.8,-1) to (4.1,-1);
\draw[->] (4.5,-1) to (4.8,-1);
\draw[->] (5,-1) to (5.3,-1);
\draw[->] (5.7,-1) to (6,-1);
\node at (6.5,-1.2) {.};
\end{tikzpicture}
\end{equation*}
By Lemma \ref{lem:connline}, the sum over $d_1\in \{d_2+1,\dots,n\}$ gives
\begin{align*}
&\begin{tikzpicture}
\node at (-0.9,-1) {$\displaystyle\sum\limits_{d_1=d_2+1}^{n}$};
\node at (0,-1) {\tiny{$k$}};
\node at (0.7,-1) {\tiny{$(\!k\!+\!1\!)$}};
\node at (1.4,-1) {.};
\node at (1.5,-1) {.};
\node at (1.6,-1) {.};
\node at (2.3,-1) {\tiny{$\underline{(\!k\!-\!1\!)}$}};
\node at (2.3,-1.5) {\tiny{$[d_{k-1}]$}};
\node at (3,-1) {.};
\node at (3.1,-1) {.};
\node at (3.2,-1) {.};
\node at (3.7,-1) {\tiny{$\underline{2}$}};
\node at (3.7,-1.5) {\tiny{$[d_{2}]$}};
\node at (4.2,-1) {.};
\node at (4.3,-1) {.};
\node at (4.4,-1) {.};
\node at (4.9,-1) {\tiny{$\underline{1}$}};
\node at (4.9,-1.5) {\tiny{$[d_{1}]$}};
\node at (5.4,-1) {.};
\node at (5.5,-1) {.};
\node at (5.6,-1) {.};
\node at (6.1,-1) {\tiny{$n$}};
\draw[->] (0.1,-1) to (0.4,-1);
\draw[->] (1,-1) to (1.3,-1);
\draw[->] (1.7,-1) to (2,-1);
\draw[->] (2.6,-1) to (2.9,-1);
\draw[->] (3.3,-1) to (3.6,-1);
\draw[->] (3.8,-1) to (4.1,-1);
\draw[->] (4.5,-1) to (4.8,-1);
\draw[->] (5,-1) to (5.3,-1);
\draw[->] (5.7,-1) to (6,-1);
%
\end{tikzpicture} \notag \\
&\begin{tikzpicture}
\node at (-0.5,0) {$=-$};
\node at (0,0) {\tiny{$k$}};
\node at (0.7,0) {\tiny{$(\!k\!+\!1\!)$}};
\node at (1.4,0) {.};
\node at (1.5,0) {.};
\node at (1.6,0) {.};
\node at (2.3,0) {\tiny{$\underline{(\!k\!-\!1\!)}$}};
\node at (2.3,-0.5) {\tiny{$[d_{k-1}]$}};
\node at (3,0) {.};
\node at (3.1,0) {.};
\node at (3.2,0) {.};
\node at (3.7,0) {\tiny{$\underline{2}$}};
\node at (3.7,-0.5) {\tiny{$[d_{2}]$}};
\node at (3.7,0.5) {\tiny{$1$}};
\node at (4.2,0) {.};
\node at (4.3,0) {.};
\node at (4.4,0) {.};
\node at (4.9,0) {\tiny{$n$}};
\draw[->] (0.1,0) to (0.4,0);
\draw[->] (1,0) to (1.3,0);
\draw[->] (1.7,0) to (2,0);
\draw[->] (2.6,0) to (2.9,0);
\draw[->] (3.3,0) to (3.6,0);
\draw[->] (3.8,0) to (4.1,0);
\draw[->] (4.5,0) to (4.8,0);
\draw[->] (3.7,0.4) to (3.7,0.15);
\node at (5.2,-0.2) {.};
\end{tikzpicture}
\end{align*}
Similarly, using again Lemma \ref{lem:connline} (cf.\ Remark \ref{rem:local}),
the sum over $d_2\in \{d_3+1,\dots, n\}$ is
\begin{align*}
&\begin{tikzpicture}
\node at (-1,-1) {$- \displaystyle\sum\limits_{d_2=d_3+1}^{n}$};
\node at (0,-1) {\tiny{$k$}};
\node at (0.7,-1) {\tiny{$(\!k\!+\!1\!)$}};
\node at (1.4,-1) {.};
\node at (1.5,-1) {.};
\node at (1.6,-1) {.};
\node at (2.3,-1) {\tiny{$\underline{(\!k\!-\!1\!)}$}};
\node at (2.3,-1.5) {\tiny{$[d_{k-1}]$}};
\node at (3,-1) {.};
\node at (3.1,-1) {.};
\node at (3.2,-1) {.};
\node at (3.7,-1) {\tiny{$\underline{3}$}};
\node at (3.7,-1.5) {\tiny{$[d_{3}]$}};
\node at (4.2,-1) {.};
\node at (4.3,-1) {.};
\node at (4.4,-1) {.};
\node at (4.9,-0.5) {\tiny{$1$}};
\node at (4.9,-1) {\tiny{$\underline{2}$}};
\node at (4.9,-1.5) {\tiny{$[d_{2}]$}};
\node at (5.4,-1) {.};
\node at (5.5,-1) {.};
\node at (5.6,-1) {.};
\node at (6.1,-1) {\tiny{$n$}};
\draw[->] (0.1,-1) to (0.4,-1);
\draw[->] (1,-1) to (1.3,-1);
\draw[->] (1.7,-1) to (2,-1);
\draw[->] (2.6,-1) to (2.9,-1);
\draw[->] (3.3,-1) to (3.6,-1);
\draw[->] (3.8,-1) to (4.1,-1);
\draw[->] (4.5,-1) to (4.8,-1);
\draw[->] (5,-1) to (5.3,-1);
\draw[->] (5.7,-1) to (6,-1);
\draw[->] (4.9,-0.6) to (4.9,-0.85);
%
\end{tikzpicture} \notag \\
&\begin{tikzpicture}
\node at (-0.8,0) {$=(-1)^{2}$};
\node at (0,0) {\tiny{$k$}};
\node at (0.7,0) {\tiny{$(\!k\!+\!1\!)$}};
\node at (1.4,0) {.};
\node at (1.5,0) {.};
\node at (1.6,0) {.};
\node at (2.3,0) {\tiny{$\underline{(\!k\!-\!1\!)}$}};
\node at (2.3,-0.5) {\tiny{$[d_{k-1}]$}};
\node at (3,0) {.};
\node at (3.1,0) {.};
\node at (3.2,0) {.};
\node at (3.7,0) {\tiny{$\underline{3}$}};
\node at (3.7,-0.5) {\tiny{$[d_{3}]$}};
\node at (3.7,0.5) {\tiny{$2$}};
\node at (3.7,1) {\tiny{$1$}};
\node at (4.2,0) {.};
\node at (4.3,0) {.};
\node at (4.4,0) {.};
\node at (4.9,0) {\tiny{$n$}};
\draw[->] (0.1,0) to (0.4,0);
\draw[->] (1,0) to (1.3,0);
\draw[->] (1.7,0) to (2,0);
\draw[->] (2.6,0) to (2.9,0);
\draw[->] (3.3,0) to (3.6,0);
\draw[->] (3.8,0) to (4.1,0);
\draw[->] (4.5,0) to (4.8,0);
\draw[->] (3.7,0.4) to (3.7,0.15);
\draw[->] (3.7,0.9) to (3.7,0.6);
\node at (5.2,-0.2) {.};
\end{tikzpicture}
\end{align*}
Repeating the same argument $k$ times, we conclude that
\begin{align*} 
&\begin{tikzpicture}
\node at (-2.5,-0.1) {$\displaystyle\sum\limits_{\pi\in \mc M_n^k} \pi \La_n = (-1)^{k-2} \displaystyle\sum\limits_{d_{k-1}=2}^{n}$};
\node at (0,0) {\tiny{$k$}};
\node at (0.7,0) {\tiny{$(\!k\!+\!1\!)$}};
\node at (1.4,0) {.};
\node at (1.5,0) {.};
\node at (1.6,0) {.};
\node at (2.3,0) {\tiny{$\underline{(\!k\!-\!1\!)}$}};
\node at (2.3,-0.5) {\tiny{$[d_{k-1}]$}};
\node at (2.3,0.5) {\tiny{$(\!k\!-\!2\!)$}};
\node at (2.3, 1) {.};
\node at (2.3, 1.1) {.};
\node at (2.3, 1.2) {.};
\node at (2.3,1.7) {\tiny{$1$}};
\node at (3,0) {.};
\node at (3.1,0) {.};
\node at (3.2,0) {.};
\node at (3.7,0) {\tiny{$n$}};
\draw[->] (0.1,0) to (0.4,0);
\draw[->] (1,0) to (1.3,0);
\draw[->] (1.7,0) to (2,0);
\draw[->] (2.6,0) to (2.9,0);
\draw[->] (3.3,0) to (3.6,0);
\draw[->] (2.3,1.6) to (2.3,1.3);
\draw[->] (2.3,0.9) to (2.3,0.6);
\draw[->] (2.3,0.4) to (2.3,0.15);
\end{tikzpicture} \notag\\
&\;\;\;\;\;\;\;\;\;\;\;\;\;\;\;\;\;\;\;\;\;\begin{tikzpicture}
\node at (-1.2,0) {$=(-1)^{k-1}$};
\node at (0,0) {\tiny{$k$}};
\node at (0,0.5) {\tiny{$(\!k\!-\!1\!)$}};
\node at (0, 1) {.};
\node at (0, 1.1) {.};
\node at (0, 1.2) {.};
\node at (0,1.7) {\tiny{$1$}};
\node at (0.7,0) {\tiny{$(\!k\!+\!1\!)$}};
\node at (1.4,0) {.};
\node at (1.5,0) {.};
\node at (1.6,0) {.};
\node at (2.1,0) {\tiny{$n$}};
\draw[->] (0.1,0) to (0.4,0);
\draw[->] (1,0) to (1.3,0);
\draw[->] (1.7,0) to (2,0);
\draw[->] (0,1.6) to (0,1.3);
\draw[->] (0,0.9) to (0,0.6);
\draw[->] (0,0.4) to (0,0.15);
\end{tikzpicture} \notag \\
&\;\;\;\;\;\;\;\;\;\;\;\;\;\;\;\>\,\;\;\;\;\,= (-1)^{k-1} \La_n\,,
\end{align*}
proving the lemma.
\end{proof}

\subsection{Relation between the symmetry property and Harrison's conditions} \label{ssec:symmetry}

Recall that every $Y\in W^{n-1}_{\cl}(\Pi V)$ satisfies the symmetry property \eqref{eq:symmetry}. 

\begin{lemma} \label{lem:symmetry}
If\/ $Y\in W^{n-1}_{\cl}(\Pi V)$, then $Y^{\La_n}$ satisfies Harrison's relations \eqref{eq:harrisoncond}, 
hence it lies in the differential Harrison cohomology complex:
$$
Y^{\La_n}\in C^n_{\partial,\Har}(V)\,.
$$
Conversely, given $F\in C^n_{\partial,\Har}(V)$, there exists a unique $Y\in \gr^{n-1}W^{n-1}_{\cl}(\Pi V)$,
such that 
$$
Y^{\La_n}=F\,.
$$
Consequently, the linear map
\begin{equation*}
\gr^{n-1}W^{n-1}_{\cl}(\Pi V) \xrightarrow[]{\sim} C^{n}_{\partial,\Har}(V)
\,\,,\quad
Y\mapsto Y^{\La_n}
\end{equation*}
is bijective.
\end{lemma}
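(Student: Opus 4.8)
The plan is to prove the two directions of the bijection separately, then observe they are mutual inverses. For the forward direction, I would start with $Y\in W^{n-1}_{\cl}(\Pi V)$ and must show $Y^{\La_n}$ satisfies Harrison's conditions $L_kY^{\La_n}=Y^{\La_n}$ for $2\le k\le n$. The key observation is that $L_k$ acts by a sum over monotone permutations $\pi\in\mc M_n^k$ with signs $(-1)^{\dr(\pi)}$, while the symmetry property \eqref{eq:symmetry} says $Y^\sigma = Y$, and by \eqref{eq:actclassicoperad} this translates into $Y^{\sigma(\La_n)}$ being expressible in terms of $Y^{\La_n}$ precomposed with the $S_n$-action on $V^{\otimes n}$ (with the appropriate Koszul sign, which on $\Pi V$ produces a $\sign(\sigma)$ factor). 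So $L_kY^{\La_n}$ evaluated on $v_1\otimes\dots\otimes v_n$ becomes, up to signs, a combination $\sum_{\pi\in\mc M_n^k}\pm Y^{\pi\La_n}(v_1\otimes\dots\otimes v_n)$. Now I would invoke Lemma \ref{lem:identity}: the relation $\La_n + (-1)^k\sum_{\pi\in\mc M_n^k}\pi\La_n \equiv 0$ modulo cycle relations, combined with Remark \ref{rem:cycle} ($Y^\Ga = 0$ for $\Ga\in R(n)$), gives exactly $Y^{\La_n} + (-1)^k\sum_{\pi}Y^{\pi\La_n} = 0$, i.e. $L_kY^{\La_n} = Y^{\La_n}$ after matching the sign $(-1)^{\dr(\pi)}$ with $(-1)^{k-1}\sign(\pi)$ via \eqref{eq:drop} and the parity-reversal sign. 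The bookkeeping of signs is the one genuinely delicate point here.

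For the converse, given $F\in C^n_{\partial,\Har}(V)$, I need to produce $Y\in\gr^{n-1}W^{n-1}_{\cl}(\Pi V)$ with $Y^{\La_n}=F$. By the analysis preceding the theorem, an element of $\gr^{n-1}W^{n-1}_{\cl}(\Pi V)$ is a collection of maps $Y^\Ga\colon(\Pi V)^{\otimes n}\to\Pi V$ for connected acyclic $\Ga$ with $n-1$ edges (lines), vanishing on non-connected graphs, satisfying the cycle relations \eqref{eq:cycle1}–\eqref{eq:cycle2}, the symmetry \eqref{eq:symmetry}, and $\partial$-compatibility. By Theorem \ref{thm:lines}, $\mc L(n)$ is a basis of $\mb F\mc G(n)/R(n)$, and by \eqref{eq:lines} the connected lines are precisely $\{\sigma\La_n : \sigma(1)=1\}$; moreover \eqref{eq:lincomb} says every connected graph is, modulo $R(n)$, a unique combination of these. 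So I would \emph{define} $Y^{\sigma\La_n} := F\circ(\text{the }S_n\text{-action of }\sigma^{-1})$ with the correct sign dictated by \eqref{eq:actclassicoperad} and parity reversal, extend to all lines $\sigma\La_n$ ($\sigma\in S_n$ arbitrary) so that \eqref{eq:symmetry} holds by construction, set $Y^\Ga=0$ on non-connected graphs and on graphs containing cycles, and extend linearly via \eqref{eq:lincomb} to all connected graphs. The content to be checked is: (i) this is well-defined, i.e. the cycle relations \eqref{eq:cycle2} are respected — which holds because we factored through $\mb F\mc G(n)/R(n)$ using Theorem \ref{thm:lines}; (ii) the resulting collection genuinely satisfies the symmetry property on \emph{all} graphs, not just lines — this requires that the two different ways of writing $\sigma(\tau\La_n)$ agree, which follows from $\sigma\tau$ again being a permutation and the associativity of the $S_n$-action; (iii) $\partial$-compatibility, inherited from $F\in\Hom_{\mb F[\partial]}$; and (iv) the defining relations $L_kF=F$ are exactly what is needed for the symmetry property to be consistent — this is the reverse of the forward-direction computation, again through Lemma \ref{lem:identity}.

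The main obstacle I anticipate is the sign reconciliation: one must carefully track the interplay between (a) the Koszul--Quillen sign $\epsilon_v(\sigma)$ from \eqref{eq:signacttensorprod}, which on the parity-reversed space $\Pi V$ contributes $\sign(\sigma)$, (b) the $S_n$-action on graphs and tuples in \eqref{eq:actclassicoperad}, and (c) the sign $(-1)^{\dr(\pi)} = (-1)^{k-1}\sign(\pi)$ in Harrison's operator $L_k$ from \eqref{eq:drop}. These must combine so that the Harrison relation $L_kF=F$ is \emph{equivalent} to the symmetry property $Y^\sigma=Y$ restricted to the span of lines, rather than merely implied in one direction; the whole theorem hinges on this equivalence. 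Once the signs are pinned down, well-definedness and the inverse property are formal consequences of Theorems \ref{thm:lines} and the decomposition \eqref{eq:lincomb}, and bijectivity follows since the two constructions are visibly inverse to each other on the basis $\{\La_n\}$-component.
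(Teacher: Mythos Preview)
Your approach is essentially the same as the paper's, and the forward direction is exactly right. One imprecision in your converse direction is worth flagging: your point (ii) claims that symmetry on all graphs follows from ``$\sigma\tau$ again being a permutation and the associativity of the $S_n$-action.'' This is not enough. The basis lines in $\mc L(n)$ are only those $\tau\La_n$ with $\tau(1)=1$; when you compute $(Y^\sigma)^{\tau\La_n}$ you land on $Y^{\sigma\tau\La_n}$, and if $\sigma\tau(1)\neq 1$ this is \emph{not} a basis line. You must then rewrite $\sigma\tau\La_n$ modulo $R(n)$ via Lemma~\ref{lem:identity} as a signed sum over $\pi\in\mc M_n^{\tau^{-1}\sigma^{-1}(1)}$ of basis lines $\sigma\tau\pi\La_n$ (note $\sigma\tau\pi(1)=1$), apply your defining formula to each, and verify the result equals $Y^{\tau\La_n}$. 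That computation yields precisely $L_{\tau^{-1}\sigma^{-1}(1)}F$ evaluated at the $\tau$-permuted arguments, so Harrison's condition $L_kF=F$ is what closes the loop. In other words, your points (ii) and (iv) are not separate checks: (iv) \emph{is} the substance of (ii), and associativity alone handles only the trivial case $\sigma\tau(1)=1$. The paper's proof makes exactly this computation explicit; once you do too, your argument is complete.
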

\begin{proof}
First, we prove that, if $Y\in W^{n-1}_{\cl}(\Pi V)$ satisfies 
the symmetry relations \eqref{eq:symmetry}, then $f=Y^{\La_n}$ satisfies 
Harrison's conditions \eqref{eq:harrisoncond}. By Lemma \ref{lem:identity} (cf.\  Remark \ref{rem:cycle}), 
we get
\begin{equation*} 
Y^{\La_n} = (-1)^{k-1} \sum_{\pi\in \mc M_n^k} Y^{\pi (\La_n)}\,.
\end{equation*} 
Evaluating both sides of this identity on $v_1\otimes \dots\otimes v_n \in V^{\otimes n}$, the left side is simply
$Y^{\La_n}(v_1\otimes \dots\otimes v_n)=f(v_1\otimes \dots\otimes v_n)$. 
On the right-hand side, we have
\begin{align*}
(-1)^{k-1} \sum_{\pi\in \mc M_n^k} & Y^{\pi (\La_n)} (v_1\otimes \dots\otimes v_n) 
= 
(-1)^{k-1} \sum_{\pi\in \mc M_n^k} (Y^{\pi^{-1}})^{\pi (\La_n)}(v_1\otimes \dots\otimes v_n) \\
& = (-1)^{k-1} \sum_{\pi\in \mc M_n^k} \sign(\pi) Y^{\La_n}(v_{\pi(1)}\otimes \dots\otimes v_{\pi(n)}) \\
& = L_k f (v_1\otimes \dots\otimes v_n)\,,
\end{align*}
by the definition \eqref{eq:lkf} of $L_k$. Hence, $f$ satisfies Harrison's conditions \eqref{eq:harrisoncond} 
as claimed.

We next turn to the second claim of the lemma. Let $F\in C^n_{\partial,\Har}(V)$, 
i.e., $F\colon V^{\otimes n}\rightarrow V$ is an $\mb F[\partial]$-module homomorphism 
satisfying Harrison's conditions \eqref{eq:harrisoncond}. 
Then the corresponding $Y\in \gr^{n-1}W^{n-1}_{\cl}(\Pi V)$, 
such that $Y^{\La_n}=F$, is defined as follows. For $\Ga\in R(n)$, or if $\Ga\in \mc L(n)$
is not connected, we set 
\begin{equation}\label{eq:s1}
Y^{\Ga}=0\,.
\end{equation}
For $\Ga\in \mc L(n)$ connected, there exists a unique $\tau\in S_n$ such that $\tau(1)=1$ 
and $\Ga=\tau (\La_n)$. We then set
\begin{equation}\label{eq:s2}
Y^{\Ga}(v_1\otimes\dots\otimes v_n)=\sign(\tau) F(v_{\tau(1)}\otimes \dots\otimes v_{\tau(n)})\,.
\end{equation}
In particular, for $\Ga=\La_n$, we have $\tau=1$ and $Y^{\La_n}=F$.

Since, by Theorem \ref{thm:lines}, $\mc L(n)$ is a basis for the vector space $\mb F\mc G(n)/R(n)$,
equations \eqref{eq:s1} and \eqref{eq:s2} determine a unique well-defined element 
$Y \in \gr^{n-1} \mc P_{\cl}(\Pi V)(n)$. 
Indeed, $Y$ satisfies the cycle relations \eqref{eq:cycle1} and \eqref{eq:cycle2},
by \eqref{eq:s1}, Theorem \ref{thm:lines} and Remark \ref{rem:cycle}.
The first sesquilinearity condition \eqref{eq:sesq1} is obvious, and the second condition \eqref{eq:sesq2} follows from
\eqref{eq:sesq2.1}.

To prove that $Y\in \gr^{n-1} W^{n-1}_{\cl}(\Pi V)$, it remains to show that $Y$ satisfies 
the symmetry conditions \eqref{eq:symmetry}.
Obviously, the action of the symmetric group $S_n$ preserves $R(n)$ and the set of non-connected lines. 
Hence, when we evaluate \eqref{eq:symmetry} on $\Ga\in R(n)$ or on a non-connected $n$-line 
$\Ga\in \mc L(n)$, we get $0=0$.

We are left to prove that \eqref{eq:symmetry} holds when evaluated on a connected line $\Ga\in \mc L(n)$, 
which, as remarked above, can be obtained as $\Ga=\tau (\La_n)$, for a unique $\tau\in S_n$ 
such that $\tau(1)=1$. The right-hand side of \eqref{eq:symmetry}, when evaluated on such a $\Ga$ 
is given by \eqref{eq:s2}. The left-hand side is, by \eqref{eq:actclassicoperad}, 
\begin{align}\label{eq:s3}
(Y^{\sigma})^{\Ga} (v_1\otimes\dots\otimes v_n)&= Y^{\sigma\tau (\La_n)} 
(\sigma(v_1\otimes\dots\otimes v_n)) \notag \\
&= \sign(\sigma)Y^{\sigma\tau (\La_n)} (v_{\sigma(1)}\otimes\dots\otimes v_{\sigma_{n}})\,.
\end{align}
By Lemma \ref{lem:identity}, we have, modulo $R(n)$, 
\begin{equation*}
\sigma\tau(\La_n) \equiv (-1)^{\tau^{-1}\sigma^{-1}(1)-1}
\sum_{\pi \in \mc M_{n}^{\tau^{-1}\sigma^{-1}(1)}} \sigma\tau \pi (\La_n)\,.
\end{equation*}
Hence, by Remark \ref{rem:cycle}, the right-hand side of \eqref{eq:s3} becomes
\begin{equation}\label{eq:s4}
\sign(\sigma)(-1)^{\tau^{-1}\sigma^{-1}(1)-1}\sum_{\pi \in \mc M_{n}^{\tau^{-1}\sigma^{-1}(1)}} 
Y^{\sigma\tau \pi (\La_n)}(v_{\sigma^{-1}(1)}\otimes\dots\otimes v_{\sigma^{-1}(n)})\,.
\end{equation}
Note that, if $\pi\in \mc M_{n}^{\tau^{-1}\sigma^{-1}(1)}$, then $\sigma\tau\pi(1)=1$. 
Hence, we can apply formula \eqref{eq:s2} to get
\begin{equation}\label{eq:s5}
Y^{\sigma\tau\pi(\La_n)} (v_{\sigma^{-1}(1)}\otimes\dots\otimes v_{\sigma^{-1}(n)})
=
\sign (\sigma\tau\pi) F(v_{\tau\pi(1)}\otimes\dots\otimes v_{\tau\pi(n)})\,.
\end{equation}
Combining \eqref{eq:s3}--\eqref{eq:s5}, we get, by \eqref{eq:drop} and the definition \eqref{eq:lkf} of $L_k$, 
\begin{align*}
& (Y^{\sigma})^{\Ga}(v_1\otimes\dots\otimes v_n) \\
&= \sign(\tau)(-1)^{\tau^{-1}\sigma^{-1}(1)-1} \sum_{\pi\in \mc M_{n}^{\tau^{-1}\sigma^{-1}(1)}} 
\sign(\pi) F (v_{\tau\pi(1)}\otimes\dots\otimes v_{\tau\pi(n)}) \\
&=\sign(\tau)(L_{\tau^{-1}\sigma^{-1}(1)}F)(v_{\tau(1)}\otimes\dots\otimes v_{\tau}(n))\,,
\end{align*}
which equals \eqref{eq:s2} by Harrison's conditions \eqref{eq:harrisoncond}. 

Hence, $Y$ is a well-defined element of $\gr^{n-1}W^{n-1}_{\cl}(\Pi V)$, such that $Y^{\La_n}=F$, as required. 
The uniqueness of such a $Y$ is obvious since, by Theorem \ref{thm:lines}, 
$Y\in \gr^{n-1}W^{n-1}_{\cl}(\Pi V)$ is uniquely determined by its value on $\La_n$. 
\end{proof}

\subsection{Relation between $\ad X$ and the Hochschild differential} \label{ssec:differentials}

\begin{lemma} \label{lem:differential}
For\/ $Y\in W^{n-1}_\cl (\Pi V)$ and\/ $X\in W^1_\cl(\Pi V)$ defined in \eqref{eq:PVAstructure}, we have
$$
[X,Y]^{\La_{n+1}}(v_1\otimes \dots\otimes v_{n+1})=(-1)^{n+1} d(Y^{\La_n})(v_1\otimes\dots\otimes v_{n+1})
\,,
$$
where $\La_n$ is as in \eqref{eq:Ln} and $d$ is the Hochschild differential \eqref{eq:hochdifferential}.
\end{lemma}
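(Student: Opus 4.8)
The plan is to unravel both sides and match them term by term. By \eqref{eq:liebracket} and \eqref{eq:box},
$[X,Y]^{\La_{n+1}}=(X\Box Y)^{\La_{n+1}}-(-1)^{p(X)p(Y)}(Y\Box X)^{\La_{n+1}}$, where $X\Box Y=\sum_{\sigma\in S_{n,1}}(X\circ_1 Y)^{\sigma^{-1}}$ and $Y\Box X=\sum_{\sigma\in S_{2,n-1}}(Y\circ_1 X)^{\sigma^{-1}}$, with $X\circ_1 Y=X(Y\otimes 1)$ and $Y\circ_1 X=Y(X\otimes 1\otimes\dots\otimes 1)$ by \eqref{eq:operad8}. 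Evaluating on the line $\La_{n+1}$ and using the $S_{n+1}$-equivariance \eqref{eq:actclassicoperad}, each summand becomes $X\circ_1 Y$ (resp.\ $Y\circ_1 X$) evaluated on the relabeled line $\sigma^{-1}(\La_{n+1})$ and on $\sigma^{-1}(v_1\otimes\dots\otimes v_{n+1})$, up to a $\sign(\sigma)$ coming from the action on $(\Pi V)^{\otimes(n+1)}$; the $\lambda$-variables get permuted as well, but everything below is supported on connected graphs, so this reordering will not matter.

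First I would kill most of the summands. Writing the composition formula \eqref{eq:composition} for $X(Y\otimes 1)$ on a line $\Gamma'$ with cocomposition $\Delta^{n,1}(\Gamma')$: the graph $\Delta_0(\Gamma')$ lives on two vertices, and collapsing the $n$ vertices of the first group produces a $2$-cycle --- so $X^{\Delta_0(\Gamma')}=0$ by \eqref{eq:cycle1} --- unless the vertex $n+1$ is one of the two endpoints of $\Gamma'$. Among the $n+1$ shuffles of $S_{n,1}$, indexed by $\sigma(n+1)$, exactly the two with $\sigma(n+1)\in\{1,n+1\}$ survive, and for them $\Delta_0(\Gamma')$ is $\La_2$ up to orientation while $\Delta_1(\Gamma')=\La_n$. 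Similarly, for $Y(X\otimes 1\otimes\dots\otimes 1)$ on a line $\Gamma'$ with cocomposition $\Delta^{2,1,\dots,1}(\Gamma')$, collapsing the first group $\{1,2\}$ creates a cycle in $\Delta_0(\Gamma')$ --- so $Y^{\Delta_0(\Gamma')}=0$ --- unless vertices $1$ and $2$ are adjacent in $\Gamma'$; this selects exactly the $n$ shuffles of $S_{2,n-1}$ corresponding to the pairs $\{a,a+1\}$ for $a=1,\dots,n$, and for them $\Delta_1(\Gamma')=\La_2$ while $\Delta_0(\Gamma')$ is a connected $n$-line. In all surviving cases only the commutative-product component $X^{\La_2}$ of $X$ enters, never the $\lambda$-bracket component $X^{\bullet\,\,\bullet}$ --- which is why no brackets appear on the right-hand side.

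Next I would simplify each surviving term. Since $Y^{\La_n}$, $X^{\La_2}$ and the graphs $\Delta_0(\Gamma')$, $\Delta_1(\Gamma')$ above are connected (or are the unit), Remark \ref{rem:connected} and \eqref{eq:sesq2.1} make the polynomial substitutions $|_{x_i=\Lambda_i+\partial}$ in \eqref{eq:composition} trivial, and the composition reduces to plugging the value of one map --- pre-composed, where relevant, with the commutative product via \eqref{eq:PVAstructure} --- into the other. Using the $S_n$-invariance of $Y$ to rewrite $Y$ on a relabeled $n$-line as $\sign(\tau)\,Y^{\La_n}$ of the correspondingly permuted arguments, I expect: the two surviving terms of $(X\Box Y)^{\La_{n+1}}$ yield, up to sign, $v_1\,Y^{\La_n}(v_2\otimes\dots\otimes v_{n+1})$ and $Y^{\La_n}(v_1\otimes\dots\otimes v_n)\,v_{n+1}$, while the $a$-th surviving term of $(Y\Box X)^{\La_{n+1}}$ yields $Y^{\La_n}(v_1\otimes\dots\otimes v_av_{a+1}\otimes\dots\otimes v_{n+1})$. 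Comparing with \eqref{eq:hochdifferential}, these are exactly the $n+2$ terms of $d(Y^{\La_n})(v_1\otimes\dots\otimes v_{n+1})$.

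The main obstacle will be the sign bookkeeping. One must combine the $\sign(\sigma)$ from $S_{n+1}$-equivariance on $(\Pi V)^{\otimes(n+1)}$, the $(-1)^{p(\cdot)}$ factors in \eqref{eq:PVAstructure}, the orientation-reversal sign from the cycle relations (needed exactly when the collapsed two-vertex graph is $\bullet\!\leftarrow\!\bullet$ rather than $\bullet\!\to\!\bullet$), the $\sign(\tau)$ from the $S_n$-invariance of $Y$, and the sign in the parity-reversal identification of $\mc P_{\cl}(\Pi V)(k)$-cochains with ordinary maps $V^{\otimes k}\to V$. These should assemble into precisely the alternating signs $(-1)^i$ of the Hochschild differential together with the global factor $(-1)^{n+1}$; since the combinatorial content is already carried by the vanishing analysis, this final step is purely a careful accounting.
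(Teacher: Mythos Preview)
Your proposal is correct and follows essentially the same approach as the paper's proof: enumerate the shuffles in $S_{n,1}$ and $S_{2,n-1}$, use the cycle relation \eqref{eq:cycle1} on the cocomposed graph $\Delta_0$ to kill all but the ``endpoint'' and ``adjacent-pair'' shuffles, and then identify the surviving terms with the boundary and interior summands of the Hochschild differential. The paper carries out the sign bookkeeping explicitly (working in the purely even setting of Remark~\ref{rem:super}, so the only sources are $\sign(\sigma)$, the orientation-reversal for $\bullet\!\leftarrow\!\bullet$, and the $S_n$-invariance of $Y$), arriving at exactly the alternating signs and the global $(-1)^{n+1}$ you anticipate.
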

\begin{proof}
Recall that, by definition \eqref{eq:liebracket}, the adjoint action of $X$ on $Y$ is given by:
\begin{equation} \label{eq:nliebracket}
\begin{split}
[X,Y]&=X \Box Y -(-1)^{n-1}Y\Box X \\
&= \sum_{\sigma \in S_{n,1}}(X\circ_1 Y)^{\sigma^{-1}} 
+ (-1)^n \sum_{\tau \in S_{2,n-1}}(Y\circ_1 X)^{\tau^{-1}} \,,
\end{split}
\end{equation}	
since $\bar{p}(X)=1$ and $\bar{p}(Y)=n-1$. The elements of $S_{n,1}$ are 
\begin{equation} \label{eq:Sn1}
\begin{split}
\sigma_k &= \left(\begin{array}{cccccccc|c}
1 & 2 & \cdots & k-1 & k & k+1 & \cdots & n  & n+1 \\
1 & 2 & \cdots & k-1 & k+1 & k+2 & \cdots & n+1 & k
\end{array} \right) \\
&= (k \;\; k+1\;\; \cdots\; \; n+1)\,,
\end{split}
\end{equation}
for $1\le k\le n+1$.
The elements of $S_{2,n-1}$ are 
\begin{equation} \label{eq:S2n-1a}
\tau_{i,j} = \left(\begin{array}{cc|cccccccccc}
1 & 2  & 3 & \cdots & i+1 & i+2 & \cdots & j   & j+1 & \cdots & n+1 \\
i & j & 1 & \cdots & i-1 & i+1 & \cdots & j-1 & j+1 & \cdots & n+1
\end{array} \right) \,,
\end{equation}
for $1\leq i < j-1 \leq n$, and 
\begin{equation} \label{eq:S2n-1b}
\tau_{i,i+1} = \left(\begin{array}{cc|cccccccccc}
1 & 2  & 3 & \cdots & i+1 & i+2 & \cdots & n+1 \\
i & i+1 & 1 & \cdots & i-1 & i+2 & \cdots & n+1
\end{array} \right) \,,
\end{equation}
for $1\leq i \leq n$.
%

Using \eqref{eq:Sn1}, we evaluate 
\begin{equation} \label{eq:prodsigmak}
\begin{split}
\bigl(&(X\circ_1 Y)^{\sigma_k^{-1}}\bigr)^{\La_{n+1}}(v_1 \otimes \cdots \otimes v_{n+1}) \\			
&=(-1)^{n+1-k}(X\circ_1 Y)^{\sigma_k^{-1}(\La_{n+1})}
(v_1\otimes\cdots\otimes v_{k-1}\otimes v_{k+1}\otimes \cdots\otimes v_{n+1}\otimes v_k)\,.	
\end{split}
\end{equation}
For $k=1$, by the symmetric group's action described in Section \ref{ssec:ngraphs}, we have
\begin{align*}
&\begin{tikzpicture}[scale=1.5]
\node at (-1,0) {$\sigma_1^{-1}(\La_{n+1})= $};
\draw[fill] (0,0) circle [radius=0.04];
\node at (0,-0.2){\tiny{$\sigma_{1}^{-1}(1)\;\;$}};
\draw[fill] (0.5,0) circle [radius=0.04];
\node at (0.5,-0.2){\tiny{$\;\;\sigma_{1}^{-1}(2)$}};
\node at (1,0) {.};
\node at (1.1,0) {.};
\node at (1.2,0) {.};
\draw[fill] (1.7,0) circle [radius=0.04];
\node at (1.7,-0.2){\tiny{$\sigma_{1}^{-1}(n\!+\!1)$}};
\draw[->] (0.1,0) to (0.4,0);
\draw[->] (0.6,0) to (0.9,0);
\draw[->] (1.3,0) to (1.6,0);
\end{tikzpicture} \\
&\qquad\qquad\quad
\begin{tikzpicture}[scale=1.5]
\node at (-0.2,0) {$=$};
\draw[fill] (0,0) circle [radius=0.04];
\node at (0,-0.2){\tiny{$1$}};
\node at (0.5,0) {.};
\node at (0.7,0) {.};
\node at (0.6,0) {.};
\draw[fill] (1.2,0) circle [radius=0.04];
\node at (1.2,-0.2){\tiny{$n$}};
\draw[fill] (1.7,0) circle [radius=0.04];
\node at (1.7,-0.2){\tiny{$n\!+\!1$}};
\draw[->] (0.1,0) to (0.4,0);
\draw[->] (0.8,0) to (1.1,0);
\draw[<-] (0,0.1) to [out=90,in=90] (1.7,0.1);
\node at (2,0) {.};
\end{tikzpicture} 
\end{align*}
When we apply the cocomposition map $\Delta^{n,1}$ to this graph, we get
\begin{equation*}
\begin{tikzpicture}[scale=1.5]
\node at (-1.4,0) {$\Delta_0^{n,1}(\sigma_1^{-1}(\La_{n+1}))\;=$};
\draw[fill] (0,0) circle [radius=0.04];
\node at (0,-0.2){\tiny{$1$}};
\draw[fill] (0.5,0) circle [radius=0.04];
\node at (0.5,-0.2){\tiny{$2$}};
\draw[<-] (0.1,0) to (0.4,0);
\end{tikzpicture} 
\end{equation*}
and
\begin{equation*}
\begin{tikzpicture}[scale=1.5]
\node at (-1.4,0) {$\Delta_1^{n,1}(\sigma_1^{-1}(\La_{n+1}))\;=$};
\draw[fill] (0,0) circle [radius=0.04];
\node at (0,-0.2){\tiny{$1$}};
\draw[fill] (0.5,0) circle [radius=0.04];
\node at (0.5,-0.2){\tiny{$2$}};
\node at (1,0) {.};
\node at (1.1,0) {.};
\node at (1.2,0) {.};
\draw[fill] (1.7,0) circle [radius=0.04];
\node at (1.7,-0.2){\tiny{$n$}};
\draw[->] (0.1,0) to (0.4,0);
\draw[->] (0.6,0) to (0.9,0);
\draw[->] (1.3,0) to (1.6,0);
\node at (2.2,0) {$\;=\;\La_n\;.$};
\end{tikzpicture} 
\end{equation*}
Hence, by the definition \eqref{eq:composition} of the composition map, \eqref{eq:prodsigmak} becomes
\begin{equation} \label{eq:m0} 
\begin{split}
\bigl((X\circ_1 &Y)^{\sigma_1^{-1}}\bigr)^{\La_{n+1}}(v_1 \otimes \dots \otimes v_{n+1}) \\
&= (-1)^{n}(X\circ_1 Y)^{\sigma_1^{-1}(\La_{n+1})} (v_2\otimes\dots\otimes v_{n+1}\otimes v_1)  \\
&= (-1)^n X^{\bullet\!\leftarrow\!\!\!\!-\!\bullet}(Y^{\La_n}(v_2\otimes \dots\otimes v_{n+1})\otimes v_1)  \\
&= (-1)^{n+1} Y^{\La_n}(v_2\otimes\dots\otimes v_{n+1})\, v_1 \,.
\end{split}
\end{equation}
Similarly, for $k=n+1$, we have $\sigma_{n+1}=1$, and applying the cocomposition map 
$\Delta^{n,1}$ to $\sigma_{n+1}^{-1}(\La_{n+1})=\La_{n+1}$, we get:
\begin{equation*}
\begin{tikzpicture}[scale=1.5]
\node at (-1.4,0) {$\Delta_0^{n,1}(\sigma_{n+1}^{-1}(\La_{n+1}))\;=$};
\draw[fill] (0,0) circle [radius=0.04];
\node at (0,-0.2){\tiny{$1$}};
\draw[fill] (0.5,0) circle [radius=0.04];
\node at (0.5,-0.2){\tiny{$2$}};
\draw[->] (0.1,0) to (0.4,0);
\end{tikzpicture} 
\end{equation*}
and
\begin{equation*}
\begin{tikzpicture}[scale=1.5]
\node at (-1.4,0) {$\Delta_1^{n,1}(\sigma_{n+1}^{-1}(\La_{n+1}))\;=$};
\draw[fill] (0,0) circle [radius=0.04];
\node at (0,-0.2){\tiny{$1$}};
\draw[fill] (0.5,0) circle [radius=0.04];
\node at (0.5,-0.2){\tiny{$2$}};
\node at (1,0) {.};
\node at (1.1,0) {.};
\node at (1.2,0) {.};
\draw[fill] (1.7,0) circle [radius=0.04];
\node at (1.7,-0.2){\tiny{$n$}};
\draw[->] (0.1,0) to (0.4,0);
\draw[->] (0.6,0) to (0.9,0);
\draw[->] (1.3,0) to (1.6,0);
\node at (2.2,0) {$\;=\;\La_n\;.$};
\end{tikzpicture} 
\end{equation*}
Then \eqref{eq:prodsigmak} becomes
\begin{equation} \label{eq:m1} 
\begin{split}
\bigl((X\circ_1 &Y)^{\sigma_{n+1}^{-1}}\bigr)^{\La_{n+1}}(v_1 \otimes \dots \otimes v_{n+1}) \\
&= X^{\bullet\!\to\!\!\!\!-\!\bullet}(Y^{\La_n}(v_1\otimes \dots\otimes v_{n})\otimes v_{n+1})  \\
&= Y^{\La_n}(v_1\otimes\dots\otimes v_{n})\, v_{n+1}\,.
\end{split}
\end{equation}
Furthermore, for $2\leq k \leq n$, we have
\begin{align*}
&\begin{tikzpicture}[scale=1.5]
\node at (-1,0) {$\sigma_k^{-1}(\La_{n+1})= $};
\draw[fill] (0,0) circle [radius=0.04];
\node at (0,-0.2){\tiny{$\sigma_{k}^{-1}(1)\;\;$}};
\draw[fill] (0.5,0) circle [radius=0.04];
\node at (0.5,-0.2){\tiny{$\;\;\sigma_{k}^{-1}(2)$}};
\node at (1,0) {.};
\node at (1.1,0) {.};
\node at (1.2,0) {.};
\draw[fill] (1.7,0) circle [radius=0.04];
\node at (1.7,-0.2){\tiny{$\sigma_{k}^{-1}(n\!+\!1)$}};
\draw[->] (0.1,0) to (0.4,0);
\draw[->] (0.6,0) to (0.9,0);
\draw[->] (1.3,0) to (1.6,0);
\end{tikzpicture} \\
&\qquad\qquad\quad
\begin{tikzpicture}[scale=1.5]
\node at (-0.2,0) {$=$};
\draw[fill] (0,0) circle [radius=0.04];
\node at (0,-0.2){\tiny{$1$}};
\node at (0.5,0) {.};
\node at (0.7,0) {.};
\node at (0.6,0) {.};
\draw[fill] (1.2,0) circle [radius=0.04];
\node at (1.2,-0.2){\tiny{$k\!-\!1$}};
\draw[fill] (1.7,0) circle [radius=0.04];
\node at (1.7,-0.2){\tiny{$k$}};
\node at (2.2,0) {.};
\node at (2.3,0) {.};
\node at (2.4,0) {.};
\draw[fill] (2.9,0) circle [radius=0.04];
\node at (2.9,-0.2){\tiny{$n$}};
\draw[fill] (3.4,0) circle [radius=0.04];
\node at (3.4,-0.2){\tiny{$n\!+\!1$}};
\draw[->] (0.1,0) to (0.4,0);
\draw[->] (0.8,0) to (1.1,0);
\draw[->] (1.8,0) to (2.1,0);
\draw[->] (2.5,0) to (2.8,0);
\draw[->] (1.2,0.1) to [out=90,in=90] (3.4,0.1);
\draw[<-] (1.7,-0.1) to [out=-90,in=-90] (3.4,-0.1);
\node at (4,0) {.};
\end{tikzpicture} 
\end{align*}
Hence, applying the cocomposition map $\Delta^{n,1}$ we get
\begin{equation*}
\begin{tikzpicture}[scale=1.5]
\node at (-1.4,0) {$\Delta_0^{n,1}(\sigma_{k}^{-1}(\La_{n+1}))\;=$};
\draw[fill] (0,0) circle [radius=0.04];
\node at (0,-0.2){\tiny{$1$}};
\draw[fill] (0.5,0) circle [radius=0.04];
\node at (0.5,-0.2){\tiny{$2$}};
\draw[->] (0,0.1) to [out=90,in=90] (0.5,0.1);
\draw[<-] (0,-0.1) to [out=-90,in=-90] (0.5,-0.1);
\end{tikzpicture} 
\end{equation*}
which has a cycle. Therefore,
\begin{equation}\label{eq:m2}
\bigl((X\circ_1 Y)^{\sigma_{k}^{-1}}\bigr)^{\La_{n+1}} = X^{\begin{tikzpicture}
\draw[fill] (0,0) circle [radius=0.04];
\draw[fill] (0.5,0) circle [radius=0.04];
\draw[->] (0,0.1) to [out=90,in=90] (0.5,0.1);
\draw[<-] (0,-0.1) to [out=-90,in=-90] (0.5,-0.1);
\end{tikzpicture} }(\cdots) =0\,.
\end{equation}

Next, we compute
$
\bigl((Y\circ_1 X)^{\tau_{i,j}^{-1}}\bigr)^{\La_{n+1}}(v_1 \otimes \dots \otimes v_{n+1})
$
%
where $\tau_{i,j}$ is defined in \eqref{eq:S2n-1a}. By the symmetric group's action described in Section \ref{ssec:ngraphs} we have
\begin{align*}
&\begin{tikzpicture}[scale=1.5]
\node at (-1,0) {$\tau_{i,j}^{-1}(\La_{n+1})= $};
\draw[fill] (0,0) circle [radius=0.04];
\node at (0,-0.2){\tiny{$\tau_{i,j}^{-1}(1)\;\;$}};
\draw[fill] (0.5,0) circle [radius=0.04];
\node at (0.5,-0.2){\tiny{$\;\;\tau_{i,j}^{-1}(2)$}};
\node at (1,0) {.};
\node at (1.1,0) {.};
\node at (1.2,0) {.};
\draw[fill] (1.7,0) circle [radius=0.04];
\node at (1.7,-0.2){\tiny{$\tau_{i,j}^{-1}(n\!+\!1)$}};
\draw[->] (0.1,0) to (0.4,0);
\draw[->] (0.6,0) to (0.9,0);
\draw[->] (1.3,0) to (1.6,0);
\end{tikzpicture} \\
&\qquad\qquad\quad
\begin{tikzpicture}[scale=1.5]
\node at (-0.3,0) {$=$};
\draw[fill] (0,0) circle [radius=0.04];
\node at (0,-0.2){\tiny{$1$}};
\draw[fill] (0.5,0) circle [radius=0.04];
\node at (0.5,-0.2){\tiny{$2$}};
\draw[fill] (1,0) circle [radius=0.04];
\node at (1,-0.2){\tiny{$3$}};
\node at (1.5,0) {.};
\node at (1.6,0) {.};
\node at (1.7,0) {.};
\draw[fill] (2.2,0) circle [radius=0.04];
\node at (2.2,-0.2){\tiny{$i\!+\!1$}};
\draw[fill] (2.7,0) circle [radius=0.04];
\node at (2.7,-0.2){\tiny{$i\!+\!2$}};
\node at (3.2,0) {.};
\node at (3.3,0) {.};
\node at (3.4,0) {.};
\draw[fill] (3.9,0) circle [radius=0.04];
\node at (3.9,-0.2){\tiny{$j$}};
\draw[fill] (4.4,0) circle [radius=0.04];
\node at (4.4,-0.2){\tiny{$j\!+\!1$}};
\node at (4.9,0) {.};
\node at (5,0) {.};
\node at (5.1,0) {.};
\draw[fill] (5.6,0) circle [radius=0.04];
\node at (5.6,-0.2){\tiny{$n\!+\!1$}};
\draw[->] (1.1,0) to (1.4,0);
\draw[->] (1.8,0) to (2.1,0);
\draw[->] (2.8,0) to (3.1,0);
\draw[->] (3.5,0) to (3.8,0);
\draw[->] (4.5,0) to (4.8,0);
\draw[->] (5.2,0) to (5.5,0);
\draw[->] (0,0.1) to [out=90,in=90] (2.7,0.1);
\draw[<-] (0,0.1) to [out=45,in=135] (2.2,0.1);
\draw[->] (0.5,-0.1) to [out=-90,in=-90] (4.4,-0.1);
\draw[<-] (0.5,-0.1) to [out=-45,in=-135] (3.9,-0.1);
\node at (6,0) {.};
\end{tikzpicture} 
\end{align*}
Hence, applying the cocomposition $\Delta^{2,1,\dots,1}$ we get 
\begin{equation*}
\begin{tikzpicture}[scale=1.5]
\node at (-1,0) {$\Delta^{2,1,\dots,1}_0(\tau_{i,j}^{-1}(\La_{n+1}))=$};
\draw[fill] (0.5,0) circle [radius=0.04];
\node at (0.5,-0.2){\tiny{$1$}};
\draw[fill] (1,0) circle [radius=0.04];
\node at (1,-0.2){\tiny{$2$}};
\node at (1.5,0) {.};
\node at (1.6,0) {.};
\node at (1.7,0) {.};
\draw[fill] (2.2,0) circle [radius=0.04];
\node at (2.2,-0.2){\tiny{$i$}};
\draw[fill] (2.7,0) circle [radius=0.04];
\node at (2.7,-0.2){\tiny{$i\!+\!1$}};
\node at (3.2,0) {.};
\node at (3.3,0) {.};
\node at (3.4,0) {.};
\draw[fill] (3.9,0) circle [radius=0.04];
\node at (3.9,-0.2){\tiny{$j\!-\!1$}};
\draw[fill] (4.4,0) circle [radius=0.04];
\node at (4.4,-0.2){\tiny{$j$}};
\node at (4.9,0) {.};
\node at (5,0) {.};
\node at (5.1,0) {.};
\draw[fill] (5.6,0) circle [radius=0.04];
\node at (5.6,-0.2){\tiny{$n$}};
\draw[->] (1.1,0) to (1.4,0);
\draw[->] (1.8,0) to (2.1,0);
\draw[->] (2.8,0) to (3.1,0);
\draw[->] (3.5,0) to (3.8,0);
\draw[->] (4.5,0) to (4.8,0);
\draw[->] (5.2,0) to (5.5,0);
\draw[->] (0.5,0.1) to [out=90,in=90] (2.7,0.1);
\draw[<-] (0.5,0.1) to [out=45,in=135] (2.2,0.1);
\draw[->] (0.5,-0.1) to [out=-90,in=-90] (4.4,-0.1);
\draw[<-] (0.5,-0.1) to [out=-45,in=-135] (3.9,-0.1);
\node at (6,0) {,};
\end{tikzpicture} 
\end{equation*}
which has a cycle. Therefore,
\begin{equation}\label{eq:m3}
\bigl((Y\circ_1 X)^{\tau_{i,j}^{-1}}\bigr)^{\La_{n+1}} 
= 
Y^{\Delta_0^{2,1,\dots,1}(\tau_{i,j}^{-1}(\La_{n+1}))}(\cdots) =0\,.
\end{equation}

Finally, we evaluate 
$
\bigl((Y\circ_1 X)^{\tau_{i,i+1}^{-1}}\bigr)^{\La_{n+1}}(v_1 \otimes \dots \otimes v_{n+1})
$
%
with $\tau_{i,i+1}$ defined in \eqref{eq:S2n-1b}. In this case, we have
\begin{align*}
&\begin{tikzpicture}[scale=1.5]
\node at (-1,0) {$\tau_{i,i+1}^{-1}(\La_{n+1})= $};
\draw[fill] (0,0) circle [radius=0.04];
\node at (0,-0.2){\tiny{$\tau_{i,i+1}^{-1}(1)\;\;\;\;$}};
\draw[fill] (0.5,0) circle [radius=0.04];
\node at (0.5,-0.2){\tiny{$\;\;\;\;\tau_{i,i+1}^{-1}(2)$}};
\node at (1,0) {.};
\node at (1.1,0) {.};
\node at (1.2,0) {.};
\draw[fill] (1.7,0) circle [radius=0.04];
\node at (1.7,-0.2){\tiny{$\tau_{i,i+1}^{-1}(n\!+\!1)$}};
\draw[->] (0.1,0) to (0.4,0);
\draw[->] (0.6,0) to (0.9,0);
\draw[->] (1.3,0) to (1.6,0);
\end{tikzpicture} \\
&\qquad\qquad\quad\;\;
\begin{tikzpicture}[scale=1.5]
\node at (-0.3,0) {$=$};
\draw[fill] (0,0) circle [radius=0.04];
\node at (0,-0.2){\tiny{$1$}};
\draw[fill] (0.5,0) circle [radius=0.04];
\node at (0.5,-0.2){\tiny{$2$}};
\draw[fill] (1,0) circle [radius=0.04];
\node at (1,-0.2){\tiny{$3$}};
\node at (1.5,0) {.};
\node at (1.6,0) {.};
\node at (1.7,0) {.};
\draw[fill] (2.2,0) circle [radius=0.04];
\node at (2.2,-0.2){\tiny{$i\!+\!1$}};
\draw[fill] (2.7,0) circle [radius=0.04];
\node at (2.7,-0.2){\tiny{$i\!+\!2$}};
\node at (3.2,0) {.};
\node at (3.3,0) {.};
\node at (3.4,0) {.};
\draw[fill] (3.9,0) circle [radius=0.04];
\node at (3.9,-0.2){\tiny{$n\!+\!1$}};
\draw[->] (0.1,0) to (0.4,0);
\draw[->] (1.1,0) to (1.4,0);
\draw[->] (1.8,0) to (2.1,0);
\draw[->] (2.8,0) to (3.1,0);
\draw[->] (3.5,0) to (3.8,0);
\draw[<-] (0,0.1) to [out=90,in=90] (2.2,0.1);
\draw[->] (0.5,-0.1) to [out=-90,in=-90] (2.7,-0.1);
\node at (4.5,0) {.};
\end{tikzpicture} 
\end{align*}
Hence, applying the cocomposition $\Delta^{2,1,\dots,1}$ we get 
\begin{equation*}
\begin{tikzpicture}[scale=1.5]
\node at (-1,0) {$\Delta^{2,1,\dots,1}_0(\tau_{i,i+1}^{-1}(\La_{n+1}))=$};
\draw[fill] (0.5,0) circle [radius=0.04];
\node at (0.5,-0.2){\tiny{$1$}};
\draw[fill] (1,0) circle [radius=0.04];
\node at (1,-0.2){\tiny{$2$}};
\node at (1.5,0) {.};
\node at (1.6,0) {.};
\node at (1.7,0) {.};
\draw[fill] (2.2,0) circle [radius=0.04];
\node at (2.2,-0.2){\tiny{$i$}};
\draw[fill] (2.7,0) circle [radius=0.04];
\node at (2.7,-0.2){\tiny{$i\!+\!1$}};
\node at (3.2,0) {.};
\node at (3.3,0) {.};
\node at (3.4,0) {.};
\draw[fill] (3.9,0) circle [radius=0.04];
\node at (3.9,-0.2){\tiny{$n$}};
\draw[->] (1.1,0) to (1.4,0);
\draw[->] (1.8,0) to (2.1,0);
\draw[->] (2.8,0) to (3.1,0);
\draw[->] (3.5,0) to (3.8,0);
\draw[<-] (0.5,0.1) to [out=90,in=90] (2.2,0.1);
\draw[->] (0.5,-0.1) to [out=-90,in=-90] (2.7,-0.1);
%
\end{tikzpicture} 
\end{equation*}
and
\begin{equation*}
\begin{tikzpicture}[scale=1.5]
\node at (-1.6,0) {$\Delta_1^{2,1,\dots,1}(\tau_{i,i+1}^{-1}(\La_{n+1}))\;=$};
\draw[fill] (0,0) circle [radius=0.04];
\node at (0,-0.2){\tiny{$1$}};
\draw[fill] (0.5,0) circle [radius=0.04];
\node at (0.5,-0.2){\tiny{$2$}};
\draw[->] (0.1,0) to (0.4,0);
\node at (1,0) {.};
\end{tikzpicture} 
\end{equation*}
Therefore, by definition \eqref{eq:composition} of the composition map, we obtain
\begin{align}\label{eq:m4}
&\bigl((Y\circ_1 X)^{\tau_{i,j}^{-1}}\bigr)^{\La_{n+1}} (v_1\otimes\dots\otimes v_n)= \notag \\
&=(Y\circ_1 X)^{\tau_{i,j}^{-1}(\La_{n+1})}(v_i\otimes v_{i+1}\otimes v_1\otimes \dots\otimes v_{i-1}\otimes v_{i+2}\otimes \dots\otimes v_{n+1}) \notag \\
&=Y^{\Delta^{2,1,\dots,1}_0(\tau_{i,i+1}^{-1}(\La_{n+1}))} \bigl(X^{\bullet\!\to\!\!\!\!-\!\bullet}(v_i\otimes v_{i+1})\otimes v_1\otimes \dots\otimes v_{i-1}\otimes v_{i+2}\otimes \dots\otimes v_{n+1}\bigr) \notag\\
&=Y^{\Delta^{2,1,\dots,1}_0(\tau_{i,i+1}^{-1}(\La_{n+1}))}(v_i v_{i+1}\otimes v_1\otimes \dots\otimes v_{i-1}\otimes v_{i+2}\otimes \dots\otimes v_{n+1}) \,.
\end{align}
Note that
\begin{equation*}
\Delta^{2,1,\dots,1}_0(\tau_{i,i+1}^{-1}(\La_{n+1}))=\sigma(\La_n)\,,
\end{equation*}
where $\sigma=(1\;2\,\cdots\,i)\in S_n$ is the $i$-cycle. Hence, by the symmetry property \eqref{eq:symmetry}, 
we can replace $Y$ by $Y^{{\sigma}^{-1}}$ in the right-hand side of \eqref{eq:m4} to get
\begin{equation}\label{eq:m5}
(-1)^{i+1} Y^{\La_n}(v_1\otimes \dots\otimes v_{i-1}\otimes v_i v_{i+1}\otimes v_{i+2}\otimes 
\dots\otimes v_{n+1})\,.
\end{equation}
Combining equations \eqref{eq:nliebracket} and \eqref{eq:m0}--\eqref{eq:m5}, we conclude that
\begin{align*}
[X,&Y]^{\La_{n+1}} (v_1\otimes\dots\otimes v_n) \\
&=(-1)^{n+1} v_1 \, Y^{\La_n}(v_2\otimes\dots\otimes v_{n+1})
+ Y^{\La_n}(v_1\otimes\dots\otimes v_{n})\, v_{n+1} \\
&\;\;\;+ (-1)^n \sum_{i=1}^{n} (-1)^{i+1} Y^{\La_n}(v_1\otimes \dots\otimes v_{i-1}\otimes v_i v_{i+1}
\otimes v_{i+2}\otimes \dots\otimes v_{n+1}) \\
&=(-1)^{n+1} d(Y^{\La_n})(v_1\otimes\dots\otimes v_n)\,,
\end{align*}
completing the proof.
\end{proof}
\begin{corollary}\label{cor:differential}
Let\/ $X$ be defined in \eqref{eq:PVAstructure} and\/ $Y\in W^{n-1}_\cl (\Pi V)$ be such that $[X,Y]=0$. 
Then\/ $Y^{\La_n}$ is a cocyle in the Hochschild cohomology.
\end{corollary}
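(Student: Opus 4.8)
The plan is to deduce the corollary immediately from Lemma \ref{lem:differential}. First I would note that, since $Y\in W^{n-1}_{\cl}(\Pi V)$ and $X\in W^{1}_{\cl}(\Pi V)$, the Lie bracket $[X,Y]$ lies in $W^{n}_{\cl}(\Pi V)$, so it is legitimate to evaluate it on the standard $(n+1)$-line $\La_{n+1}$. The hypothesis $[X,Y]=0$ therefore forces $[X,Y]^{\La_{n+1}}=0$ as an element of $\Hom_{\mb F[\partial]}\bigl(V^{\otimes(n+1)},V\bigr)$.

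Next, Lemma \ref{lem:differential} identifies this map with $(-1)^{n+1}d(Y^{\La_n})$, where $d$ is the Hochschild differential \eqref{eq:hochdifferential} associated to the commutative associative product on $V$ coming from the PVA structure \eqref{eq:PVAstructure}. Combining the two observations gives $d(Y^{\La_n})=0$, i.e.\ $Y^{\La_n}$ is a Hochschild cocycle, which is exactly the assertion of the corollary. Moreover, by Lemma \ref{lem:symmetry} we already know $Y^{\La_n}\in C^{n}_{\partial,\Har}(V)$, so one obtains the slightly stronger statement that $Y^{\La_n}$ is in fact a cocycle of the differential Harrison subcomplex $\bigl(C_{\partial,\Har}(V),d\bigr)$.

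There is no genuine obstacle here: all of the computational content — in particular the delicate bookkeeping of signs and the vanishing of the graphs containing cycles — has already been carried out in the proof of Lemma \ref{lem:differential}. The corollary is merely the restriction of that lemma to $\ker(\ad X)\subset W^{n-1}_{\cl}(\Pi V)$.
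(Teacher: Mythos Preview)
Your proposal is correct and follows exactly the paper's approach: the corollary is deduced immediately from Lemma~\ref{lem:differential}, with the paper's own proof reading simply ``Obvious, by Lemma~\ref{lem:differential}.'' Your additional remark invoking Lemma~\ref{lem:symmetry} to place $Y^{\La_n}$ in the differential Harrison subcomplex is a valid and slightly more informative elaboration, but the core argument is identical.
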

\begin{proof}
Obvious, by Lemma \ref{lem:differential}.
\end{proof}

\subsection{Proof of Theorem \ref{thm:main}} \label{ssec:proof}

By Lemma \ref{lem:symmetry}, given $Y\in W^{n-1}_{\cl}(\Pi V)$, $Y^{\La_n}$ is a cochain 
in the differential Harrison complex. Conversely, for any $F\in C^{n}_{\partial,\Har}(V)$, 
there is a unique $Y\in \gr^{n-1}W^{n-1}_{\cl}(\Pi V)$ such that $F=Y^{\La_n}$. 
Hence, the following diagram is well defined:
\begin{equation}\label{eq:diagram}
\begin{array}{rccc}
Y\ni & W^{n-1}_{\cl}(\Pi V) & \xrightarrow[]{\ad X} & W^n_{\cl}(\Pi V) \\
\downarrow\;\;\;\; & \downarrow & & \downarrow \\
Y^{\La_n}\ni & C^{n}_{\partial,\Har}(V) & \xrightarrow[]{\;\; d \;\;} & C^{n+1}_{\partial,\Har}(V)
\end{array}\;\;\;\,,
\end{equation}
where the vertical maps are surjective and restrict to bijective maps on top degree: 
$\gr^{n-1}W^{n-1}_{\cl}(\Pi V)\xrightarrow[]{\sim} C^n_{\partial,\Har}(V)$. 
Lemma \ref{lem:differential} says that, up to a sign, the diagram \eqref{eq:diagram} is commutative. 
This completes the proof of the theorem.



\end{document}